\DeclareMathOperator{\diam}{diam}
\DeclareMathOperator{\pr}{pr}
\newcommand{\ud}[0]{\,\mathrm{d}}
\newcommand{\eps}[0]{\varepsilon}
\providecommand{\abs}[1]{\lvert#1\rvert} 
\newcommand{\Norm}[2]{\|#1\|_{#2}}
\newcommand{\BMO}[0]{\operatorname{BMO}}
\newcommand{\loc}[0]{\operatorname{loc}}
\newcommand{\lab}[0]{\operatorname{label}}
\newcommand{\Exp}[0]{\mathbb{E}}
\newcommand{\prob}[0]{\mathbb{P}}
\newcommand{\R}{\mathbb{R}}
\newcommand{\Z}{\mathbb{Z}}
\newcommand{\N}{\mathbb{N}}
\numberwithin{equation}{section}
  \let\c@equation\c@subsection
\theoremstyle{plain}
\newtheorem{theorem}[subsection]{Theorem}
\newtheorem{lemma}[subsection]{Lemma}
\newtheorem{corollary}[subsection]{Corollary}
\newtheorem{proposition}[subsection]{Proposition}
\theoremstyle{definition}
\newtheorem{definition}[subsection]{Definition}
\theoremstyle{remark}
\newtheorem{remark}[subsection]{Remark}
\title{Systems of dyadic cubes in a doubling metric space}
\author{Tuomas Hyt\"onen}
\author{Anna Kairema}
\address{Department of Mathematics and Statistics, P.O.B. 68 (Gustaf H\"allstr\"omin katu 2), FI-00014 University of Helsinki, Finland}
\email{tuomas.hytonen@helsinki.fi, anna.kairema@helsinki.fi}
\subjclass[2010]{30L99 (Primary); 42B25, 60D05 (Secondary)}
\keywords{Space of homogeneous type, dyadic cube, random geometric construction}
\thanks{The authors are supported by the Academy of Finland, grants 130166, 133264 and 218148.}
\begin{document}
\maketitle

\begin{abstract}
A number of recent results in Euclidean Harmonic Analysis have exploited several adjacent systems of dyadic cubes, instead of just one fixed system. In this paper, we extend such constructions to general spaces of homogeneous type, making these tools available for Analysis on metric spaces. The results include a new (non-random) construction of boundedly many adjacent dyadic systems with useful covering properties, and a streamlined version of the random construction recently devised by H.~Martikainen and the first author. We illustrate the usefulness of these constructions with applications to weighted inequalities and the BMO space; further applications will appear in forthcoming work.
\end{abstract}

\section{Introduction}

The standard system of dyadic cubes,
\begin{equation*}
  \mathscr{D}:=\{2^{-k}\big([0,1)^n+m\big):k\in\Z,m\in\Z^n\},
\end{equation*}
plays an indispensable role in Harmonic Analysis on the Euclidean space $\R^n$. The fundamental properties of these cubes are that any two of them are either disjoint or one is contained in the other, and that the cubes of a given size partition all space. Also, the cubes are not too far away from balls, which are usually more natural objects from the geometric point of view.

Accordingly, there has been interest in constructing analogues of dyadic cubes in more general metric spaces, to provide tools for Analysis in such settings. The first results in this direction, as far as we know, are due to G.~David \cite{David88}, Appendix~A; see also \cite{David91}, Appendix~I. A more comprehensive construction was provided by M.~Christ \cite{Christ90}, in the full generality of Coifman--Weiss spaces of homogeneous type \cite{CW71}, and this has become the standard reference on the topic. In addition to the basic geometric properties expected from the cubes, Christ also obtained a certain smallness of the boundary condition (in terms of an underlying doubling measure), which has turned out useful in applications to singular integrals.  A more elementary construction, without addressing the boundary control but nevertheless sufficient for many purposes, was provided by E. Sawyer and R. L. Wheeden \cite{SW92}. Some further variants have been considered by other authors \cite{ABI,KRS}.

Meanwhile, new developments in $\R^n$ have seen the need to consider not just one but several adjacent dyadic systems. Two types of constructions are of particular interest here.  On the one hand, there are the \emph{random dyadic systems} due to F.~Nazarov, S.~Treil and A.~Volberg \cite{NTV:Cauchy}, Section~4,
\begin{equation*}
  \mathscr{D}(\omega):=\Big\{2^{-k}\big([0,1)^n+m\big)+\sum_{j>k}2^{-j}\omega_j:k\in\Z,m\in\Z^n\Big\},\qquad
  \omega=(\omega_j)_{j\in\Z}\in\Omega:=(\{0,1\}^n)^{\Z},
\end{equation*}
where $\Omega$ is equipped with the natural product probability measure. The randomization provides a powerful way of controlling edge effects, even when the space is equipped with a non-doubling measure, by proving that any given point $x\in\R^n$ has a small probability of ending up close to the boundary of a randomly chosen cube. These random dyadic systems have been instrumental in the development of the non-doubling theory of singular integrals \cite{NTV:Cauchy,NTV:Tb} and its applications to analytic capacity \cite{Tolsa:Painleve,Volberg:03} as well as sharp one-weight \cite{Hytonen:A2,HPTV} and two-weight \cite{LSUT:Hilbert,NTV:Hilbert} inequalities for classical singular integrals. A version of such random cubes in metric spaces, starting from Christ's construction, was recently obtained by the first author and H.~Martikainen \cite{HM09}, to study singular integrals in metric spaces with non-doubling measures.

On the other hand, there is a \emph{non-random choice} of just boundedly many dyadic systems, say
\begin{equation*}
  \mathscr{D}^{t}:=\{2^{-k}\big([0,1)^n+m+(-1)^k t\big):k\in\Z,m\in\Z^n\},\qquad t\in\{0,\tfrac13,\tfrac23\}^n,
\end{equation*}
which have the following useful property: for every ball $B$, there exists a cube $Q$ in at least one of the $\mathscr{D}^{t}$ such that $B\subseteq\tfrac{9}{10} Q$ while $\diam(Q)\leq C\diam(B)$. These adjacent systems have been exploited, e.g., in the work of C.~Muscalu, T.~Tao and C.~Thiele \cite{MTT:02,MTT:03} on multi-linear operators, and very recently by M.~Lacey, E.~Sawyer and I.~Uriarte-Tuero \cite{LSUT:08} on two-weight inequalities. We are not aware of the precise original occurrence of this latter set of systems: Lacey et al. (\cite{LSUT:08}, Section~2.2) attribute them to J.~Garnett and P.~Jones; Muscalu et al. (\cite{MTT:02}, Section~5) to M.~Christ, at least what comes to the observation on $B\subseteq\tfrac{9}{10} Q$. T.~Mei \cite{Mei03} has shown that a similar conclusion can be obtained with just $n+1$ (rather than $3^n$) cleverly chosen systems $\mathscr{D}^{t}$.

The goals of the present paper are two-fold. First, we recall and streamline the construction of the Christ-type dyadic cubes in a metric space, including the recent randomized version $\mathscr{D}(\omega)$ from~\cite{HM09}. Second, we provide a metric space version of the non-random choice of boundedly many dyadic systems $\mathscr{D}^t$, with the property that any $B$ is contained in some $Q\in\bigcup_t\mathscr{D}^t$ with $\diam(Q)\leq C\diam(B)$, which is a completely new result. We even combine the two constructions, yielding a random family of adjacent dyadic systems $\mathscr{D}^t(\omega)$.

We have strived for a reasonably comprehensive and transparent presentation, including some results which could be found elsewhere. In the hope of making the paper a useful reference, we have tried to make the statements of our theorems easily applicable as ``black boxes'', but also paid attention to the details of the proofs. As in $\R^n$, where the open ($2^{-k}((0,1)^n+m)$), half-open ($2^{-k}([0,1)^n+m)$) and closed ($2^{-k}([0,1]^n+m)$) dyadic cubes each serve their different purpose, we also present (as in \cite{HM09}) a unified construction of three related systems of open, half-open and closed cubes. While this may not be the absolute shortest route to the half-open cubes alone (for which one should probably consult the recent paper of A.~K\"aenm\"aki, T.~Rajala and V.~Suomala~\cite{KRS}), we find the properties of the open and closed cubes proven on the way to be useful as well.

The basic idea behind the construction of several adjacent dyadic systems is from \cite{HM09}: New centre points for the cubes of sidelength $\delta^k$ (where $\delta>0$ is a small parameter playing the role of the constant $\tfrac12$ in the classical Euclidean dyadic system) are chosen among the old centre points of the (one level smaller) cubes of sidelength $\delta^{k+1}$. In the non-probabilistic selection, instead of doing this randomly, we need to do it in a ``clever'' way. The basic \emph{conflict} to avoid is two new centres getting too close to each other. This is achieved by equipping the points with suitable \emph{labels}, which help us in avoiding these conflicts. As it turns out, this philosophy can also be used to simplify the original random construction from \cite{HM09}, where originally the conflicts were first allowed among the new centre points, and yet another selection process was needed to remove some of them, thereby yielding the final points. This simplification already proved useful in the consideration of vector-valued singular integrals by the random cubes method by Martikainen~\cite{Henri:10}, and it is expected to be of interest elsewhere. A particular feature of the new selection process is a natural one-to-one correspondence between the old and new cubes, which was not present, when some of the new centres were first removed; thus the original cubes may be used as an index set for the new cubes, a technical property which was much exploited in some of the recent Euclidean applications \cite{Hytonen:A2,HPTV}.

As an illustration of the use of the new adjacent dyadic systems, we provide easy extensions  of two results in Euclidean harmonic analysis to metric spaces $X$ with a doubling measure $\mu$: First, Buckley's theorem \cite{Buckley:93} on the sharp weighted norm of the Hardy--Littlewood maximal operator,
\begin{equation*}
  \Norm{Mf}{L^p(w)}\leq C\Norm{w}{A_p}^{1/(p-1)}\Norm{f}{L^p(w)},
  \qquad\Norm{w}{A_p}:=\sup_B\Big(\fint_B w\ud\mu\Big)\Big(\fint_B w^{-1/(p-1)}\ud\mu\Big)^{p-1},
\end{equation*}
where the supremum is over all metric balls in $X$, and $C$ only depends on $X$, $\mu$ and $p\in(1,\infty)$. Second, the representation of $\BMO(\mu)$ as an intersection of finitely many dyadic BMO spaces, extending the Euclidean result in \cite{Mei03}.

In extending Buckley's theorem, we follow the Euclidean approach due to Lerner~\cite{Lerner:08}. A noteworthy feature of our argument is the circumvention of the use of the Besicovitch covering theorem, an essentially Euclidean device used in Lerner's original proof, by the trivial covering properties exhibited by the adjacent dyadic systems. We believe that this displays a more general principle of avoiding the Besicovitch theorem and thereby allowing extensions of other Euclidean results to metric spaces.

 Note that only these applications, but not the construction of the cubes as such, depends on the existence of a doubling measure $\mu$ on~$X$; for the cubes, we only need the weaker geometric doubling property that any ball of radius $r$ can be covered by at most $A_1$ (a fixed constant) balls of radius~$\tfrac12 r$. Further applications will be considered in a forthcoming paper by the second author.


\section{Definition and construction of a dyadic system}

\subsection{Set-up}
Let $\rho$ be a quasi-metric on the space $X$, i.e., it satisfies the axioms of a metric except for the triangle inequality, which is assumed in the weaker form
\begin{equation*}
  \rho(x,y)\leq A_0\big(\rho(x,z)+\rho(z,y)\big)
\end{equation*}
with a constant $A_0\geq 1$.
The quasi-metric space $(X,\rho)$ is assumed to have the following \emph{(geometric) doubling property}: There exists a positive integer $A_1\in \N$ such that for every $x\in X$ and for every $r>0$, the ball $B(x,r):=\{y\in X:\rho(y,x)<r\}$ can be covered by at most $A_1$ balls $B(x_i,r/2)$.

Until further notice, no other properties of the quasi-metric space $(X,\rho)$ will be required; in particular, we do not assume any measurability of $X$. Some of the arguments are valid even without the assumption of geometric doubling.

Set $a_1:=\log_2 A_1$. The following properties are easy to check; cf. \cite[Lemmas 2.3 and 2.5]{Framework}:
\begin{enumerate}
  \item Any ball $B(x,r)$ can be covered by at most $A_1\delta^{-a_1}$ balls of radius $\delta r$ for any $\delta\in(0,1]$. 
  \item\label{it:disjoint} Any ball $B(x,r)$ contains at most $A_1\delta^{-a_1}$ centres $x_i$ of pairwise disjoint balls $B(x_i,\delta r)$.
  \item\label{it:countable} Any disjoint family of balls in $X$ is at most countable.
  \item\label{it:separated} If $x,y\in X$ have $\rho (x,y)\geq r$, then the balls $B(x,r/(2A_0))$ and $B(y,r/(2A_0))$ are disjoint.
\end{enumerate}

A subset $\Omega\subseteq X$ is \emph{open} if for every $x\in\Omega$ there exist $\eps>0$ such that $B(x,\eps)\subseteq\Omega$. A subset $F\subseteq X$ is \emph{closed} if its complement is open. The usual proof of the fact that $F\subseteq X$ is closed, if and only if it contains its limit points, carries over to the quasi-metric spaces. However, some balls $B(x,r)$ may fail to be open. (E.g., consider $X=\{-1\}\cup[0,\infty)$ with the usual distance between all other pairs of points except $\rho(-1,0):=\frac12$. Then $B(-1,1)=\{-1,0\}$ does not contain any ball of the form $B(0,\eps)$, and hence cannot be open.)


\begin{theorem}\label{thm:cubes}
Suppose that constants $0<c_0\leq C_0 <\infty$ and $\delta \in (0,1)$ satisfy
\begin{equation}\label{parameter;conditions}
12A_0^3 C_0\delta \leq c_0.
\end{equation}
Given a set of points $\{z^k_{\alpha}\}_{\alpha}$, $\alpha\in \mathscr{A}_k$, for every $k\in\Z$, with the properties that
\begin{equation}\label{ehdot}
  \rho(z_{\alpha}^k,z_{\beta}^k)\geq c_0\delta^k\quad(\alpha\neq\beta),\qquad
  \min_{\alpha}\rho(x,z^k_{\alpha})<C_0\delta^k\quad\forall\,x\in X,
\end{equation}
we can construct families of sets $\tilde{Q}^k_{\alpha}\subseteq {Q}^k_{\alpha}\subseteq\bar{Q}^k_{\alpha}$ --- called open, half-open and closed \emph{dyadic cubes} --- such that:
\begin{equation}\label{eq:open-closed}
   \tilde{Q}^k_{\alpha}\text{ and }\bar{Q}^k_{\alpha}\text{ are the interior and closure of }Q^k_{\alpha};\\
\end{equation}
\begin{equation}\label{eq:nested}
   \text{if }\ell\geq k\text{, then either }Q^{\ell}_{\beta}\subseteq Q^k_{\alpha}\text{ or }Q^k_{\alpha}\cap Q^{\ell}_{\beta}=\varnothing;
\end{equation}
\begin{equation}\label{eq:cover}
  X=\bigcup_{\alpha}Q^k_{\alpha}\quad\text{(disjoint union)}\quad\forall k\in\Z;
\end{equation}
\begin{equation}\label{eq:contain}
  B(z^k_{\alpha},c_1\delta^k)\subseteq Q^k_{\alpha}\subseteq B(z^k_{\alpha},C_1\delta^k)=:B(Q^k_{\alpha})\text{, where $c_1:=(3A_0^2)^{-1}c_0$ and $C_1:=2A_0C_0$};
\end{equation}
\begin{equation}\label{eq:monotone}
   \text{if }k\leq\ell\text{ and }Q^{\ell}_{\beta}\subseteq Q^k_{\alpha}\text{, then }B(Q^{\ell}_{\beta})\subseteq B(Q^k_{\alpha}).
\end{equation}
The open and closed cubes $\tilde{Q}^k_{\alpha}$ and $\bar{Q}^k_{\alpha}$ depend only on the points $z^{\ell}_{\beta}$ for $\ell\geq k$. The half-open cubes $Q^k_{\alpha}$ depend on $z^{\ell}_{\beta}$ for $\ell\geq\min(k,k_0)$, where $k_0\in\Z$ is a preassigned number entering the construction.
\end{theorem}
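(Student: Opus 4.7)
The plan is to introduce a parent/ancestor tree on the centres $\{z^k_\alpha\}$, to define the open and closed cubes intrinsically as the interior and closure of a union of small balls around descendants, and to define the half-open cube $Q^k_\alpha$ as a nested refinement of a Voronoi-type partition anchored at a preassigned base level $k_0$. Fix a well-ordering of each (at most countable) $\mathscr{A}_k$ for tie-breaking, and for every $\beta \in \mathscr{A}_\ell$ let $\pi(\beta) \in \mathscr{A}_{\ell-1}$ be the tie-broken minimizer of $\alpha \mapsto \rho(z^\ell_\beta, z^{\ell-1}_\alpha)$. Iterating produces ancestor maps $\pi^{\ell-k} \colon \mathscr{A}_\ell \to \mathscr{A}_k$ for all $k \leq \ell$. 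From the covering condition one has $\rho(z^\ell_\beta, z^{\ell-1}_{\pi(\beta)}) < C_0 \delta^{\ell-1}$, and an iterated application of the quasi-triangle inequality, summed as a geometric series controlled by $A_0\delta \leq 1/(12A_0^2) < 1$ (a consequence of \eqref{parameter;conditions}), yields the fundamental ancestor-distance bound $\rho(z^\ell_\beta, z^k_{\pi^{\ell-k}(\beta)}) \leq A_0 C_0 \delta^k / (1 - A_0\delta) < C_1 \delta^k$, uniformly in $\ell$; this is the workhorse for all subsequent quantitative claims. A parallel argument using the separation at level $\ell-1$ shows $\pi$ is surjective, so no node of the tree is barren.

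For the open and closed cubes, set $U^k_\alpha := \bigcup_{\ell \geq k} \bigcup_{\beta \,:\, \pi^{\ell-k}(\beta) = \alpha} B(z^\ell_\beta, c_1 \delta^\ell)$ and define $\tilde{Q}^k_\alpha := \operatorname{int} U^k_\alpha$ and $\bar{Q}^k_\alpha := \overline{U^k_\alpha}$; both depend only on $z^\ell_\beta$ for $\ell \geq k$, since the ancestor chain consults only those levels. For the half-open cubes, fix $k_0 \in \Z$ and let $V^{k_0}_\alpha$ be the set of $x \in X$ for which $\alpha$ is the tie-broken minimizer of $\beta \mapsto \rho(x, z^{k_0}_\beta)$, giving a partition of $X$ into base-level Voronoi cells. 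Propagate coarser by $V^{k-1}_\alpha := \bigsqcup_{\beta \,:\, \pi(\beta) = \alpha} V^k_\beta$ for $k \leq k_0$, and finer by $V^{k+1}_\beta := \{x \in V^k_{\pi(\beta)} \,:\, \beta \text{ is the tie-broken minimizer over children } \gamma \text{ of } \pi(\beta) \text{ of } \rho(x, z^{k+1}_\gamma)\}$ for $k \geq k_0$; set $Q^k_\alpha := V^k_\alpha$. The nesting \eqref{eq:nested}, the disjoint cover \eqref{eq:cover}, and the stated dependence on $z^\ell_\beta$ for $\ell \geq \min(k,k_0)$ are then all immediate from this recursive construction.

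The bulk of the work is verifying the two-sided ball inclusion \eqref{eq:contain}, from which \eqref{eq:open-closed} and \eqref{eq:monotone} follow easily. The outer inclusion $Q^k_\alpha \subseteq B(z^k_\alpha, C_1\delta^k)$ comes from the ancestor-distance bound applied to the chain from $x$'s fine-end assignment back up to $z^k_\alpha$, combined with one further application of the quasi-triangle inequality. The inner inclusion $B(z^k_\alpha, c_1\delta^k) \subseteq Q^k_\alpha$ is the crucial step: if $\rho(x, z^k_\alpha) < c_1\delta^k$, one argues by contradiction that $x$'s assignment at level $k$ must equal $\alpha$, since for any competitor $\alpha' \neq \alpha$ the separation $\rho(z^k_\alpha, z^k_{\alpha'}) \geq c_0\delta^k$ combined with the quasi-triangle inequality and the ancestor-distance bound of Paragraph 1 forces $x$'s base-level Voronoi (or refinement) assignment strictly outside the subtree of $\alpha'$. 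This is precisely where the value $c_1 = (3A_0^2)^{-1}c_0$ and the parameter restriction $12A_0^3 C_0 \delta \leq c_0$ are consumed. Property \eqref{eq:open-closed} then follows by verifying $U^k_\alpha \subseteq Q^k_\alpha \subseteq \overline{U^k_\alpha}$ (the first inclusion by the inner ball inclusion applied at every $\ell \geq k$, the second by tracking each $x \in Q^k_\alpha$ via its ancestor chain to the closure of a descendant ball), and \eqref{eq:monotone} is a direct consequence of \eqref{eq:nested} together with the outer ball inclusion.

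I expect the main obstacle to be the quantitative inner ball inclusion, which must be established uniformly across $k \leq k_0$ and $k > k_0$ while carefully tracking the $A_0$-powers that accumulate in each application of the quasi-triangle inequality; this is precisely what dictates the choice of $c_1$ and the parameter condition \eqref{parameter;conditions}. A secondary subtlety is that in a quasi-metric space balls need not be open, as the excerpt explicitly warns, so the topological identification \eqref{eq:open-closed} cannot rely on the heuristic that balls are open sets and must instead be argued directly from the explicit formula for $U^k_\alpha$.
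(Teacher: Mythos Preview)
Your overall architecture (ancestor tree, then cubes, then sandwich) is reasonable, and the ancestor-distance estimate you quote is exactly what the paper proves and uses. However, there is a genuine gap in your treatment of property~\eqref{eq:open-closed}, and it is not merely the quasi-metric subtlety you flag at the end.

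From the sandwich $U^k_\alpha \subseteq Q^k_\alpha \subseteq \overline{U^k_\alpha}$ you correctly get $\overline{Q^k_\alpha}=\overline{U^k_\alpha}=\bar Q^k_\alpha$, but for interiors you only obtain
\[
\operatorname{int}U^k_\alpha\ \subseteq\ \operatorname{int}Q^k_\alpha\ \subseteq\ \operatorname{int}\overline{U^k_\alpha},
\]
and in general $\operatorname{int}\overline{U}$ can be strictly larger than $\operatorname{int}U$. So you have not shown $\operatorname{int}Q^k_\alpha=\tilde Q^k_\alpha$. Worse, in a quasi-metric space the ball $B(z^k_\alpha,c_1\delta^k)$ is one of the summands of $U^k_\alpha$ but need not lie in $\operatorname{int}U^k_\alpha$, so even the inner ball inclusion $B(z^k_\alpha,c_1\delta^k)\subseteq\tilde Q^k_\alpha$ is not yet established in your framework. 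The paper avoids both problems by a different \emph{definition}: it sets $\bar Q^k_\alpha$ to be the closure of the descendant \emph{point} set $\hat Q^k_\alpha=\{z^\ell_\beta:(\ell,\beta)\le(k,\alpha)\}$ and then defines
\[
\tilde Q^k_\alpha:=\Big(\bigcup_{\gamma\neq\alpha}\bar Q^k_\gamma\Big)^{c}.
\]
This is manifestly open once one checks (by local finiteness) that the union on the right is closed; the inner ball inclusion then reduces to the purely metric disjointness $\bar Q^k_\gamma\cap B(z^k_\alpha,c_1\delta^k)=\varnothing$ for $\gamma\neq\alpha$, and a separate short argument (using that the $\hat Q^k_\gamma$, $\gamma\neq\alpha$, are dense in $(\tilde Q^k_\alpha)^c$) identifies $\tilde Q^k_\alpha$ with the interior of $\bar Q^k_\alpha$. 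Your ball-union $U^k_\alpha$ does not give you access to this complementary description, and that is the missing idea.

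A second, related issue: your Voronoi-based half-open cubes are not the paper's, and the inner/outer ball inclusions for them are more delicate than your sketch suggests. For $k>k_0$ you refine $V^k_{\pi(\beta)}$ by ``nearest child of $\pi(\beta)$''; but the children of $\pi(\beta)$ are only known to lie within $C_0\delta^{k}$ of $z^k_{\pi(\beta)}$, so the nearest child to a point $x\in V^k_{\pi(\beta)}$ can be at distance of order $\delta^{k}$, not $\delta^{k+1}$, from $x$, and your induction for $V^{k+1}_\beta\subseteq B(z^{k+1}_\beta,C_1\delta^{k+1})$ does not close. The paper sidesteps this entirely: it first proves the ball inclusions for the open and closed cubes, and then builds the half-open cubes by an enumeration--subtraction procedure that is \emph{forced} to satisfy $\tilde Q^k_\alpha\subseteq Q^k_\alpha\subseteq\bar Q^k_\alpha$, so \eqref{eq:contain} is inherited automatically rather than re-proved.
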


To some extent, this combines the benefits of the alternative constructions of Christ and Sawyer--Wheeden: on the one hand, we obtain dyadic cubes on all length scales (rather than from a given level up), as in Christ's construction, and we also obtain an exact partition of the space (rather than up to measure zero) as in Sawyer and Wheeden. The fact that things would be slightly simpler if we started from a fixed finest level of partition, like Sawyer and Wheeden, is reflected by the dependence of the half-open cubes also on the points of the coarser scales once we go past the preassigned threshold level $k_0$.

The proof consists of several steps. 

\begin{lemma}[Partial order for dyadic points]\label{lem:partial;order}
Under the assumptions of Theorem~\ref{thm:cubes}, there is a partial order $\leq$ among the pairs $(k,\alpha)$ such that:
\begin{itemize}
  \item if $\rho(z^{k+1}_{\beta},z^k_{\alpha})<(2A_0)^{-1}c_0\delta^k$ then $(k+1,\beta)\leq(k,\alpha)$;
  \item if $(k+1,\beta)\leq(k,\alpha)$ then $\rho(z^{k+1}_{\beta},z^k_{\alpha})<C_0\delta^k$;
   \item for every $(k+1,\beta)$, there is exactly one $(k,\alpha)\geq(k+1,\beta)$, called its \emph{parent};
   \item for every $(k,\alpha)$, there are between $1$ and $M$ pairs $(k+1,\beta)\leq(k,\alpha)$, called its \emph{children};
  \item there holds $(\ell,\beta)\leq(k,\alpha)$ if and only if $\ell\geq k$ and there are $(j+1,\gamma_{j+1})\leq(j,\gamma_j)$ for all $j=k,k+1,\ldots,\ell-1$,
   for some $\gamma_k=\alpha,\gamma_{k+1},\ldots,\gamma_{\ell-1},\gamma_{\ell}=\beta$; then $(\ell,\beta)$ and $(k,\alpha)$ are called one another's \emph{descendant} and \emph{ancestor}, respectively.
\end{itemize}
\end{lemma}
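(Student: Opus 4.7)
My plan is to define the partial order by first constructing a parent function on the one-step level (assigning each $(k+1,\beta)$ a unique predecessor $(k,\alpha)$) and then extending by transitivity to obtain the full ancestor-descendant relation.

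The key observation is that the condition $\rho(z^{k+1}_\beta,z^k_\alpha)<(2A_0)^{-1}c_0\delta^k$ can be satisfied by at most one $\alpha$: if two distinct indices $\alpha,\alpha'$ both satisfied it, then the quasi-triangle inequality would give $\rho(z^k_\alpha,z^k_{\alpha'}) \leq A_0(2\cdot(2A_0)^{-1}c_0\delta^k) = c_0\delta^k$, contradicting the separation property in \eqref{ehdot}. With this in hand, I would define the parent of $(k+1,\beta)$ by the rule: if there is an $\alpha$ with $\rho(z^{k+1}_\beta,z^k_\alpha)<(2A_0)^{-1}c_0\delta^k$, take it (it is unique); otherwise, pick any $\alpha$ with $\rho(z^{k+1}_\beta,z^k_\alpha)<C_0\delta^k$, which exists by the covering condition in \eqref{ehdot}. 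This immediately gives the first two bullet points.

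Next I would verify that each $(k,\alpha)$ has at least one child. Applying the covering condition at level $k+1$ to the point $x=z^k_\alpha$, there exists $\beta$ with $\rho(z^k_\alpha,z^{k+1}_\beta)<C_0\delta^{k+1}$. The parameter condition \eqref{parameter;conditions} gives $C_0\delta \leq c_0/(12A_0^3)<(2A_0)^{-1}c_0$, hence $\rho(z^k_\alpha,z^{k+1}_\beta)<(2A_0)^{-1}c_0\delta^k$, so by the first bullet this $\beta$ is forced to have $(k,\alpha)$ as its parent. For the upper bound on the number of children: every child $\beta$ of $(k,\alpha)$ has $z^{k+1}_\beta\in B(z^k_\alpha,C_0\delta^k)$, and the balls $B(z^{k+1}_\beta,c_0\delta^{k+1}/(2A_0))$ are pairwise disjoint by property \eqref{it:separated} of the set-up. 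Applying property \eqref{it:disjoint} with $r=C_0\delta^k$ and scaling factor $c_0\delta/(2A_0C_0)$ yields the explicit bound $M=A_1(2A_0C_0/(c_0\delta))^{a_1}$.

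Finally, with the parent relation in place, I would declare $(\ell,\beta)\leq(k,\alpha)$ precisely when $\ell\geq k$ and there is a chain of parent steps $(j+1,\gamma_{j+1})\leq(j,\gamma_j)$ with $\gamma_k=\alpha$ and $\gamma_\ell=\beta$, which directly gives the fifth bullet. Reflexivity is the case $\ell=k$, transitivity is immediate from concatenation of chains, and antisymmetry follows because comparable elements with $(\ell,\beta)\leq(k,\alpha)$ and $(k,\alpha)\leq(\ell,\beta)$ must have $\ell=k$ by the level constraint, and the uniqueness of parents (plus induction on the chain length) forces $\alpha=\beta$. I do not expect any serious obstacle; the only delicate point is the existence of a forced parent for each $(k,\alpha)$, which is exactly what the smallness condition \eqref{parameter;conditions} is designed to guarantee.
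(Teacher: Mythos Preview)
Your proposal is correct and follows essentially the same approach as the paper: define the one-step parent by taking the unique close $\alpha$ when it exists (uniqueness via the quasi-triangle inequality and separation) and otherwise any $\alpha$ within $C_0\delta^k$, then extend by transitivity; the existence of at least one child via $C_0\delta^{k+1}\leq(2A_0)^{-1}c_0\delta^k$ and the boundedness of the number of children via geometric doubling are exactly as in the paper. You in fact supply more detail than the paper (an explicit value of $M$ and the verification of the partial-order axioms), but the argument is the same.
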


\begin{proof}
Indeed, these properties essentially define $\leq$: Given a point $(k+1,\beta)$, check whether there exists $\alpha$ such that $\rho(z^k_{\alpha},z^{k+1}_{\beta})<(2A_0)^{-1}c_0\delta^k$. If one exists, it is necessarily unique by \eqref{ehdot} and we decree that $(k+1,\beta)\leq(k,\alpha)$. If no such good $\alpha$ exists, choose any $\alpha$ for which $\rho(z^k_{\alpha},z^{k+1}_{\beta})<C_0\delta^k$ (at least one such $\alpha$ exists by \eqref{ehdot}) and decree that $(k+1,\beta)\leq(k,\alpha)$. (From \eqref{ehdot} and the geometric doubling property, it follows readily that the index sets $\mathscr{A}_k$, $k\in \Z$, are countable. By assuming that the countable number of indices $\alpha$ are taken from $\N$, we could choose the smallest~$\alpha$, thereby eliminating the arbitrariness of this choice in the construction.) In either case, we decree that $(k+1,\beta)$ is not related to any other $(k,\gamma)$, and finally we extend $\leq$ by transitivity to obtain a partial ordering.

It only remains to check the claim concerning the number of children. If $(k+1,\beta)\leq (k,\alpha)$, we have $\rho(z_\alpha^{k},z_\beta^{k+1})<C_0\delta^k$, but also $\rho(z_\beta^{k+1},z^{k+1}_{\gamma})\geq c_0\delta^{k+1}$ for any $\gamma\neq\beta$. Thus the geometric doubling property implies that there can be at most boundedly many, say $M$, such $(k+1,\beta)$. Conversely, for $(k,\alpha)$, there exists at least one $(k+1,\beta)$ with $\rho(z_\alpha^{k},z_\beta^{k+1})<C_0\delta^{k+1}\leq(2A_0)^{-1}c_0\delta^k$, and thus $(k+1,\beta)\leq(k,\alpha)$, so there is at least one child.
\end{proof}

With the partial order defined, it is possible to formulate the rest of the construction, although proving all the stated properties needs some more work. As a preliminary version, we define for every $k\in \Z$ and every $\alpha$
\begin{equation*}
  \hat{Q}^k_{\alpha}:=\{z^{\ell}_{\beta}\colon (\ell,\beta)\leq(k,\alpha)\} .
\end{equation*}
Then
\begin{equation*}
  \bar{Q}^k_{\alpha}:=\overline{\hat{Q}^k_{\alpha}},
\end{equation*}
the closure of $\hat{Q}^k_{\alpha}$, and
\begin{equation*}
  \tilde{Q}^k_{\alpha}:=\Big(\bigcup_{\beta\neq\alpha}\bar{Q}^k_{\beta}\Big)^c.
\end{equation*}
It is clear from the definition that these depend only on $z^{\ell}_{\beta}$ for $\ell\geq k$.

In the following section we prove:

\begin{proposition}[Properties of closed and open dyadic cubes]\label{prop:cubes}
Suppose that for every $k\in \Z$ we have a set of points with properties \eqref{ehdot} and constants that satisfy \eqref{parameter;conditions} as in Theorem~\ref{thm:cubes}. Then the cubes $\tilde{Q}^k_{\alpha}$ and $\bar{Q}^k_{\alpha}$ satisfy:
\begin{equation}\label{eq:interior-closure}
  \tilde{Q}^k_{\alpha}\text{ and }\bar{Q}^k_{\alpha}\text{ are one another's interior and closure, respectively};
\end{equation}
\begin{equation}\label{eq:nested-prelim}
  \text{for }\ell\geq k\text{, there holds }\tilde{Q}^{\ell}_{\beta}\subseteq\tilde{Q}^k_{\alpha}\text{ if $(\ell,\beta)\leq(k,\alpha)$, and }
  \tilde{Q}^{\ell}_{\beta}\cap\bar{Q}^k_{\alpha}=\bar{Q}^{\ell}_{\beta}\cap\tilde{Q}^k_{\alpha}=\varnothing\text{ else};
\end{equation}
\begin{equation}\label{eq:closed-cover}
  X=\bigcup_{\alpha}\bar{Q}^k_{\alpha}\quad\text{(possibly with overlap)}\quad\forall k\in\Z;
\end{equation}
\begin{equation}\label{eq:contain-prelim}
  (i)\quad B(z^k_{\alpha},c_1\delta^k)\subseteq\tilde{Q}^k_{\alpha},\qquad (ii)\quad\bar{Q}^k_{\alpha}\subseteq B(z^k_{\alpha},C_1\delta^k);
\end{equation}
\begin{equation}\label{eq:monot-prelim}
  \text{if }(\ell,\beta)\leq(k,\alpha)\text{ then }B(z^{\ell}_{\beta},C_1\delta^{\ell})\subseteq B(z^{k}_{\alpha},C_1\delta^k).
\end{equation}
Moreover,
\begin{equation}\label{eq:unionOfClosed}
     \bar{Q}^k_{\alpha}=\bigcup_{\beta:(\ell,\beta)\leq(k,\alpha)}\bar{Q}^{\ell}_{\beta}\qquad\forall k\leq\ell. 
\end{equation}
\end{proposition}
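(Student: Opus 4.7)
The plan is to prove the six properties in a dependency order, beginning with a foundational ancestor-descendant distance estimate. Iterating the quasi-triangle inequality along the chain $(k,\alpha) \geq (k+1,\gamma_{k+1}) \geq \cdots \geq (\ell,\beta)$ and using the link bound $\rho(z^{j+1}_{\gamma_{j+1}},z^{j}_{\gamma_{j}}) < C_{0}\delta^{j}$ from Lemma~\ref{lem:partial;order}, I obtain $\rho(z^{\ell}_{\beta},z^{k}_{\alpha}) \leq A_{0}C_{0}\delta^{k}/(1-A_{0}\delta)$ by solving the recursion $B_{n}(k) \leq A_{0}C_{0}\delta^{k}+A_{0}B_{n-1}(k+1)$. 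The condition~\eqref{parameter;conditions} keeps $A_{0}\delta$ small enough that this bound, even after the inflation by $A_{0}$ incurred upon passing to closures in the quasi-metric, stays strictly below $C_{1}\delta^{k}$. Consequently, \eqref{eq:monot-prelim} and~\eqref{eq:contain-prelim}(ii) both follow by one additional quasi-triangle step.

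For~\eqref{eq:closed-cover}, I fix $x \in X$ and use the covering condition in~\eqref{ehdot} to pick, at each level $m \geq k$, a center $z^{m}_{\gamma_{m}}$ with $\rho(x,z^{m}_{\gamma_{m}}) < C_{0}\delta^{m}$, then track its level-$k$ ancestor $(k,\beta_{m})$. The previous distance estimate confines every $z^{k}_{\beta_{m}}$ to a fixed bounded region about $x$, so the geometric doubling property combined with the separation in~\eqref{ehdot} limits $\beta_{m}$ to finitely many values; pigeonhole yields a single $\beta$ attained for infinitely many $m$, and the corresponding subsequence $z^{m}_{\gamma_{m}} \to x$ witnesses $x \in \bar{Q}^{k}_{\beta}$. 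The lower containment~\eqref{eq:contain-prelim}(i) is then a contradiction argument: were $y \in B(z^{k}_{\alpha},c_{1}\delta^{k}) \cap \bar{Q}^{k}_{\beta}$ with $\beta \neq \alpha$, then combining $\rho(y,z^{k}_{\alpha}) < c_{1}\delta^{k}$ with the bound from (ii) via quasi-triangle would force $\rho(z^{k}_{\alpha},z^{k}_{\beta}) < c_{0}\delta^{k}$, contradicting~\eqref{ehdot}; the choices $c_{1} = c_{0}/(3A_{0}^{2})$ and $12A_{0}^{3}C_{0}\delta \leq c_{0}$ are calibrated precisely for this estimate. The identity~\eqref{eq:unionOfClosed} then follows: $\supseteq$ is immediate since $(\ell,\beta) \leq (k,\alpha)$ gives $\hat{Q}^{\ell}_{\beta} \subseteq \hat{Q}^{k}_{\alpha}$; for $\subseteq$, express any $y \in \bar{Q}^{k}_{\alpha}$ as the limit of a sequence in $\hat{Q}^{k}_{\alpha}$ that may be taken with levels $m_{n} \geq \ell$ (or, if the sequence is eventually constant, handle $y$ as a descendant center directly) and pigeonhole among the finitely many level-$\ell$ ancestors of those $(m_{n},\gamma_{n})$.

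The nesting/disjointness~\eqref{eq:nested-prelim} splits into two cases. When $(\ell,\beta) \leq (k,\alpha)$, \eqref{eq:unionOfClosed} expresses each wrong-ancestor cube $\bar{Q}^{k}_{\alpha'}$ ($\alpha' \neq \alpha$) as a union of $\bar{Q}^{\ell}_{\gamma}$ with $\gamma \neq \beta$ (by uniqueness of ancestors), each disjoint from $\tilde{Q}^{\ell}_{\beta}$ by the definition of $\tilde{Q}$; this gives $\tilde{Q}^{\ell}_{\beta} \subseteq \tilde{Q}^{k}_{\alpha}$. The unrelated case $(\ell,\beta) \not\leq (k,\alpha)$ reduces to the same-level disjointness of $\tilde{Q}^{k}_{\gamma}$ and $\bar{Q}^{k}_{\gamma'}$ for $\gamma \neq \gamma'$, immediate from the definition. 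Finally for~\eqref{eq:interior-closure}, $\bar{Q}^{k}_{\alpha}$ is closed and $\tilde{Q}^{k}_{\alpha}$ open by construction; the equality $\overline{\tilde{Q}^{k}_{\alpha}} = \bar{Q}^{k}_{\alpha}$ holds because each descendant center $z^{\ell}_{\beta_{\ell}} \in \hat{Q}^{k}_{\alpha}$ lies in $B(z^{\ell}_{\beta_{\ell}},c_{1}\delta^{\ell}) \subseteq \tilde{Q}^{\ell}_{\beta_{\ell}} \subseteq \tilde{Q}^{k}_{\alpha}$ by~\eqref{eq:contain-prelim}(i) and~\eqref{eq:nested-prelim}, so every limit point of $\hat{Q}^{k}_{\alpha}$ lies in $\overline{\tilde{Q}^{k}_{\alpha}}$; and $\tilde{Q}^{k}_{\alpha} = \operatorname{int}(\bar{Q}^{k}_{\alpha})$ follows by a brief contradiction via this identity applied to a neighboring cube. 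The main obstacle throughout is the asymmetry of the quasi-triangle inequality, which inflates every estimate by a factor $A_{0}$ (and once more at each closure), so that the tight calibration of constants in~\eqref{parameter;conditions} is exactly what is needed to absorb these factors simultaneously in all six claims.
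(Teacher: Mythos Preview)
Your argument for~\eqref{eq:contain-prelim}(i) has a genuine gap. You write that if $y \in B(z^{k}_{\alpha},c_{1}\delta^{k}) \cap \bar{Q}^{k}_{\beta}$ with $\beta \neq \alpha$, then combining $\rho(y,z^{k}_{\alpha}) < c_{1}\delta^{k}$ with~\eqref{eq:contain-prelim}(ii) forces $\rho(z^{k}_{\alpha},z^{k}_{\beta}) < c_{0}\delta^{k}$. But the quasi-triangle inequality gives only
\[
  \rho(z^{k}_{\alpha},z^{k}_{\beta}) \leq A_{0}\big(c_{1}+C_{1}\big)\delta^{k}
  = \Big(\frac{c_{0}}{3A_{0}} + 2A_{0}^{2}C_{0}\Big)\delta^{k},
\]
and since $c_{0}\leq C_{0}$, the term $2A_{0}^{2}C_{0}\geq 2c_{0}$ alone already exceeds $c_{0}$; no contradiction results. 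You cite $12A_{0}^{3}C_{0}\delta\leq c_{0}$ as ``calibrated precisely for this estimate,'' yet your estimate does not contain $\delta$ at all. The paper's proof (Lemma~\ref{lem:ballInside}) fixes this by first decomposing $\bar{Q}^{k}_{\beta}$ into its level-$(k+1)$ children $\bar{Q}^{k+1}_{\gamma}\subseteq B(z^{k+1}_{\gamma},C_{1}\delta^{k+1})$; the extra factor $\delta$ is exactly what lets the condition~\eqref{parameter;conditions} bite, yielding $\rho(z^{k+1}_{\gamma},z^{k}_{\alpha})<(2A_{0})^{-1}c_{0}\delta^{k}$ and hence $(k+1,\gamma)\leq(k,\alpha)$, contradicting $(k+1,\gamma)\leq(k,\beta)$.

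There is a related, smaller issue in your treatment of~\eqref{eq:contain-prelim}(ii). Your plan is to bound $\rho(z^{\ell}_{\beta},z^{k}_{\alpha})$ by the geometric sum and then pay one factor $A_{0}$ for the closure. But the sum already gives at best $A_{0}C_{0}\delta^{k}/(1-A_{0}\delta)$, and after multiplying by $A_{0}$ this exceeds $C_{1}\delta^{k}=2A_{0}C_{0}\delta^{k}$ once $A_{0}\geq 11/6$. The paper avoids the loss by splitting off the first link $z^{k}_{\alpha}\to z^{k+1}_{\gamma}$ and applying a single three-term quasi-triangle inequality to $\rho(z^{k}_{\alpha},x)$ that incorporates the limit point $x$ directly, so the $\delta^{k+1}$ factor absorbs the closure term without inflating the whole bound. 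Finally, note that ``$\tilde{Q}^{k}_{\alpha}$ open by construction'' is not automatic: it requires that the infinite union $\bigcup_{\gamma\neq\alpha}\bar{Q}^{k}_{\gamma}$ be closed, which the paper verifies separately (Lemma~\ref{closed:union}) via geometric doubling.
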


Assuming this result, which contains the essence of Theorem~\ref{thm:cubes}, we can complete the proof of the Theorem by the following lemma: 

\begin{lemma}[Construction of half-open cubes]\label{lem:finalise}
Assuming Proposition~\ref{prop:cubes}, we can construct $Q^k_{\alpha}$ which satisfy the assertions of Theorem~\ref{thm:cubes}.
\end{lemma}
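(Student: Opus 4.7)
The plan is to fix the preassigned threshold level $k_0$ as a base, define a partition of $X$ at level $k_0$ by a simple selection procedure among the closed cubes $\bar{Q}^{k_0}_{\alpha}$, and then propagate this partition to all other levels via the parent--child relation from Lemma~\ref{lem:partial;order}, maintaining the sandwich $\tilde Q^k_\alpha\subseteq Q^k_\alpha\subseteq\bar Q^k_\alpha$ throughout.

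At level $k_0$, since $\mathscr{A}_{k_0}$ is at most countable (by \eqref{ehdot} and the geometric doubling property, as noted in the proof of Lemma~\ref{lem:partial;order}), I would enumerate it as $\{\alpha_1,\alpha_2,\ldots\}$ and set
\[
   Q^{k_0}_{\alpha_i}:=\bar{Q}^{k_0}_{\alpha_i}\setminus\bigcup_{j<i}\bar{Q}^{k_0}_{\alpha_j}.
\]
By \eqref{eq:closed-cover} these form a partition of $X$, and \eqref{eq:nested-prelim} at the fixed level $k_0$ (giving $\tilde Q^{k_0}_{\alpha_i}\cap\bar Q^{k_0}_{\alpha_j}=\varnothing$ whenever $i\neq j$) yields $\tilde Q^{k_0}_{\alpha_i}\subseteq Q^{k_0}_{\alpha_i}\subseteq\bar Q^{k_0}_{\alpha_i}$. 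For finer levels $k>k_0$ I would proceed by induction on $k-k_0$: given a partition $\{Q^{k-1}_\alpha\}$ with the sandwich property, enumerate the children $\beta_1,\ldots,\beta_{m(\alpha)}$ of each $\alpha$ and set
\[
   Q^k_{\beta_j}:=Q^{k-1}_\alpha\cap\Big(\bar{Q}^k_{\beta_j}\setminus\bigcup_{i<j}\bar{Q}^k_{\beta_i}\Big).
\]
By \eqref{eq:unionOfClosed} the children's closed cubes cover $\bar Q^{k-1}_\alpha\supseteq Q^{k-1}_\alpha$, so the $Q^k_{\beta_j}$ disjointly partition $Q^{k-1}_\alpha$, and globally the $Q^k_\beta$ partition $X$; the sandwich $\tilde Q^k_\beta\subseteq Q^k_\beta\subseteq\bar Q^k_\beta$ persists because siblings are unrelated under $\leq$. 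For coarser levels $k<k_0$, I would set $Q^k_\alpha:=\bigcup\{Q^{k_0}_\beta:(k_0,\beta)\leq(k,\alpha)\}$; this is a disjoint union since every $(k_0,\beta)$ has a unique level-$k$ ancestor, and $\tilde Q^k_\alpha\subseteq Q^k_\alpha$ follows because \eqref{eq:nested-prelim} forces $\tilde Q^k_\alpha$ to meet only those $\bar Q^{k_0}_\beta$, hence only those $Q^{k_0}_\beta$, with $(k_0,\beta)\leq(k,\alpha)$.

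Once the sandwich is in place, the remaining properties are quick consequences of Proposition~\ref{prop:cubes}. Property \eqref{eq:open-closed} follows because $\tilde Q^k_\alpha\subseteq Q^k_\alpha\subseteq\bar Q^k_\alpha$ traps the interior and closure of $Q^k_\alpha$ between those of $\tilde Q^k_\alpha$ and $\bar Q^k_\alpha$, which agree by \eqref{eq:interior-closure}. Property \eqref{eq:cover} is built in, and \eqref{eq:contain} is immediate from \eqref{eq:contain-prelim}. The nesting \eqref{eq:nested} comes from the construction: if $(\ell,\beta)\leq(k,\alpha)$, iterating the parent step gives $Q^\ell_\beta\subseteq Q^k_\alpha$, and otherwise the ancestor $\alpha'$ of $\beta$ at level $k$ differs from $\alpha$, placing $Q^\ell_\beta\subseteq Q^k_{\alpha'}$ which is disjoint from $Q^k_\alpha$ by the same-level partition. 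Since $Q^\ell_\beta\supseteq B(z^\ell_\beta,c_1\delta^\ell)$ is nonempty, this dichotomy forces $Q^\ell_\beta\subseteq Q^k_\alpha$ to imply $(\ell,\beta)\leq(k,\alpha)$, whence \eqref{eq:monotone} follows from \eqref{eq:monot-prelim}.

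The one delicate point, which I expect to be the main bit of bookkeeping, is verifying the claimed dependence on the centre points. For $k\geq k_0$, each $Q^k_\alpha$ is built inductively from $Q^{k_0}_*$ together with the closed cubes and parent relations at levels in $[k_0,k]$, all of which depend only on $z^\ell_\gamma$ with $\ell\geq k_0=\min(k,k_0)$. For $k<k_0$, $Q^k_\alpha$ is a union of $Q^{k_0}_\beta$'s selected by the ancestor chain running from level $k$ up to $k_0$; this chain uses $z^\ell_\gamma$ for $k\leq\ell\leq k_0$, which together with the dependence of the $Q^{k_0}_\beta$'s on points at levels $\geq k_0$ yields dependence on $z^\ell_\gamma$ for $\ell\geq k=\min(k,k_0)$, exactly as required.
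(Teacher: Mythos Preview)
Your proof is correct and follows essentially the same approach as the paper: define the level-$k_0$ partition by successively removing earlier closed cubes, push down to finer levels $k>k_0$ by an inductive parent-wise subdivision, and pull up to coarser levels $k<k_0$ by grouping the level-$k_0$ cubes according to ancestry. The only cosmetic difference is that the paper writes the removed sets as $\bigcup_{\beta<\alpha}Q^{k_0}_\beta$ rather than $\bigcup_{j<i}\bar Q^{k_0}_{\alpha_j}$, but these coincide (as the paper itself notes via $\bigcup_{\beta\leq\alpha}Q^{k_0}_\beta=\bigcup_{\beta\leq\alpha}\bar Q^{k_0}_\beta$).
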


\begin{proof}
Here it is convenient to assume that the pairs $(k,\alpha)$ are parameterized by $\alpha\in\N$ for each $k\in\Z$.
For the given threshold level $k=k_0$, we define recursively
\begin{equation*}
  {Q}^{k_0}_0:=\bar{Q}^{k_0}_0,\qquad
  {Q}^{k_0}_{\alpha}:=\bar{Q}^{k_0}_{\alpha}\setminus\bigcup_{\beta=0}^{\alpha-1}{Q}^{k_0}_{\beta},\quad\alpha\geq 1.
\end{equation*}
By construction, it is clear that the ${Q}^{k_0}_{\alpha}$ are pairwise disjoint and satisfy
\begin{equation}\label{eq:between}
  \bar{Q}^{k_0}_{\alpha}\supseteq Q^{k_0}_{\alpha}\supseteq\bar{Q}^{k_0}_{\alpha}\setminus\bigcup_{\beta\neq\alpha}\bar{Q}_{\beta}^{k_0}
  \supseteq\tilde{Q}^{k_0}_{\alpha}\setminus(\tilde{Q}^{k_0}_{\alpha})^c=\tilde{Q}^{k_0}_{\alpha},\qquad
  \bigcup_{\beta=0}^{\alpha}{Q}^{k_0}_{\beta}=\bigcup_{\beta=0}^{\alpha}\bar{Q}^{k_0}_{\beta}\underset{\alpha\to\infty}{\longrightarrow} X.
\end{equation}

For $k<k_0$ we define
\begin{equation*}
  {Q}^k_{\alpha}:=\bigcup_{\beta:(k_0,\beta)\leq(k,\alpha)}{Q}^{k_0}_{\beta}.
\end{equation*}
Then clearly $Q^k_{\alpha}\subseteq\bar{Q}^k_{\alpha}$, these partition $X$ for a fixed $k$, and \eqref{eq:nested} holds for all $k\leq\ell\leq k_0$. Finally,
\begin{equation*}
  (Q^k_{\alpha})^c=\bigcup_{\beta:(k_0,\beta)\not\leq(k,\alpha)}{Q}^{k_0}_{\beta}
  \subseteq\bigcup_{\gamma\neq\alpha}\bigcup_{\beta:(k_0,\beta)\not\leq(k,\gamma)}\bar{Q}^{k_0}_{\beta}
  =\bigcup_{\gamma\neq\alpha}\bar{Q}^k_{\gamma}=(\tilde{Q}^k_{\alpha})^c,
\end{equation*}
so that $\tilde{Q}^k_{\alpha}\subseteq Q^k_{\alpha}$.

For $k>k_0$, we proceed by induction as follows. Suppose that the cubes ${Q}^{\ell}_{\alpha}$, $\ell\leq k-1$, are already defined as required.  For every $\alpha$, consider the finitely many $(k,\beta)\leq(k-1,\alpha)$, and relabel them temporarily with $\beta=0,1,\ldots$ (up to some finite number). Then define
\begin{equation*}
  {Q}^k_0:={Q}^{k-1}_{\alpha}\cap\bar{Q}^k_0,\qquad
  {Q}^k_{\beta}:={Q}^{k-1}_{\alpha}\cap\bar{Q}^k_{\beta}\setminus\bigcup_{\gamma=0}^{\beta-1}{Q}^k_{\gamma},\quad
  \beta\geq 1.
\end{equation*} 
Disjointness is clear, and as in \eqref{eq:between} we get $\bar{Q}^k_{\beta}\supseteq Q^k_{\beta}\supseteq Q^{k-1}_{\alpha}\cap\tilde{Q}^k_{\beta}\supseteq \tilde{Q}^{k-1}_{\alpha}\cap\tilde{Q}^k_{\beta}=\tilde{Q}^k_{\beta}$, and
\begin{equation*}
  \bigcup_{\beta:(k,\beta)\leq(k-1,\alpha)}{Q}^k_{\beta}
  ={Q}^{k-1}_{\alpha}\cap\bigcup_{\beta:(k,\beta)\leq(k-1,\alpha)}\bar{Q}^k_{\beta}={Q}^{k-1}_{\alpha}.
\end{equation*}
This easily implies \eqref{eq:nested} and \eqref{eq:cover} in all the remaining cases.

Finally, \eqref{eq:interior-closure}$\Rightarrow$\eqref{eq:open-closed} and \eqref{eq:contain-prelim}$\Rightarrow$\eqref{eq:contain} are clear from $\tilde{Q}^k_{\alpha}\subseteq Q^k_{\alpha}\subseteq \bar{Q}^k_{\alpha}$. To see that \eqref{eq:monot-prelim}$\Rightarrow$\eqref{eq:monotone}, observe that if $k\leq\ell$ and $Q^{\ell}_{\beta}\subseteq Q^k_{\alpha}$, then \eqref{eq:nested} and \eqref{eq:nested-prelim} imply that $(\ell,\beta)\leq(k,\alpha)$, and thus \eqref{eq:monot-prelim} can be used.
\end{proof}

\begin{remark}
We work with several adjacent sets of cubes, as stated. Given a system of dyadic points, the preliminary cubes $\hat{Q}_{\alpha}^{k}$ determine the open cubes $\tilde{Q}_{\alpha}^{k}$ and the closed cubes $\bar{Q}_{\alpha}^{k}$ unambiguously but not the half-open cubes $Q_{\alpha}^{k}$ as their construction involves selection. If we know the final half-open cubes $Q_{\alpha}^{k}$ as point sets, they determine the open cubes $\tilde{Q}_{\alpha}^{k}$ and the closed cubes $\bar{Q}_{\alpha}^{k}$ unambiguously as these are nothing but the set of interior points and the closure respectively. The family of the preliminary cubes $\hat{Q}_{\alpha}^{k}$, however, is not uniquely determined by the point sets $Q_{\alpha}^{k}$. But usually we think that the cubes $Q_{\alpha}^{k}$ carry the information of their centre point $z^k_\alpha$ and generation $k$ as well. The ideology here is the very same as when defining a metric ball.
\end{remark}

\subsection{Existence of dyadic points}\label{existence;dyadicpoints}
Consider a maximal collection of points $x_\alpha^{k}\in X$ satisfying the two inequalities
\begin{equation}
  \rho(x_{\alpha}^k,x_{\beta}^k)\geq c_0\delta^k\quad(\alpha\neq\beta),\qquad
  \min_{\alpha}\rho(x,x^k_{\alpha})<C_0\delta^k\quad\forall\,x\in X
\end{equation}
with constants $c_0=1=C_0$. It follows from the maximality argument that such a point set exists for any given $\delta\in (0,1)$ and every $k\in \Z$. From the first condition and the geometric doubling property it follows that a minimum in the second condition is indeed attained. Note that we may, of course, choose maximal point sets in such a way that given a fixed point $x_0\in X$, for every $k\in \Z$, there exists $\alpha$ such that $x^k_\alpha = x_0$. Finally, we may choose $\delta$ such that the restriction (\ref{parameter;conditions}) introduced in Theorem~\ref{thm:cubes} holds.

%
%

\section{Verification of the properties}

This section contains the proof of Proposition~\ref{prop:cubes}, which consists of the more technical aspects of Theorem~\ref{thm:cubes}. Suppose that for every $k\in \Z$ we have a set of points with properties (\ref{ehdot}) and constants that satisfy (\ref{parameter;conditions}) as in Theorem~\ref{thm:cubes}. Recall that $c_1:=(3A_0^2)^{-1}c_0$ and $C_1:=2A_0C_0$. We start with simple inclusion properties.

\begin{lemma}\label{lem:descendant}
If $(\ell,\beta)\leq (k,\alpha)$, then $\rho (z_\alpha^{k},z_\beta^{\ell})< C_1\delta^{k}$.
\end{lemma}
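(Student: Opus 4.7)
The plan is to exploit the chain of descendants provided by Lemma~\ref{lem:partial;order} and telescope via the quasi-triangle inequality. Since $(\ell,\beta)\leq(k,\alpha)$, the last bullet of Lemma~\ref{lem:partial;order} yields intermediate indices $\alpha=\gamma_k,\gamma_{k+1},\ldots,\gamma_\ell=\beta$ with $(j+1,\gamma_{j+1})\leq(j,\gamma_j)$ for every $j=k,\ldots,\ell-1$; the second bullet of the same lemma then furnishes the single-step distance bound $\rho(z^{j}_{\gamma_j},z^{j+1}_{\gamma_{j+1}})<C_0\delta^j$.

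Next, I iterate $\rho(y_0,y_m)\leq A_0\rho(y_0,y_1)+A_0\rho(y_1,y_m)$ along the chain $y_j:=z^{k+j}_{\gamma_{k+j}}$, always expanding the remaining right-hand piece. A straightforward induction on $n:=\ell-k$ gives
\[
  \rho(y_0,y_n)\leq\sum_{j=0}^{n-2}A_0^{j+1}\rho(y_j,y_{j+1})+A_0^{n-1}\rho(y_{n-1},y_n).
\]
Substituting the step bounds and pulling out $C_0\delta^k$ converts the right-hand side into a truncated geometric series of ratio $A_0\delta$ with leading coefficient $A_0C_0\delta^k$.

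It only remains to sum the series and check the numerology. The parameter condition $12A_0^3C_0\delta\leq c_0\leq C_0$ forces $A_0\delta\leq 1/(12A_0^2)\leq 1/12$, so $\sum_{j\geq 0}(A_0\delta)^j\leq 12/11<2$. Absorbing the tail term into the series (permissible because $A_0\geq 1$) and applying this bound yields
\[
  \rho(z^k_\alpha,z^\ell_\beta)<\frac{A_0C_0\delta^k}{1-A_0\delta}\leq\tfrac{12}{11}A_0C_0\delta^k<2A_0C_0\delta^k=C_1\delta^k,
\]
which is the claimed estimate. The one point requiring a little care is the tightness of the target constant $C_1=2A_0C_0$: the very first quasi-triangle step already consumes one factor of $A_0$, so only the margin $12/11<2$ is left for the remaining chain; the calibration $12A_0^3C_0\delta\leq c_0$ is arranged precisely to afford this margin. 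The degenerate cases $\ell=k$ (forcing $\beta=\alpha$ and $\rho=0$) and $\ell=k+1$ (the direct single-step bound $\rho<C_0\delta^k\leq C_1\delta^k$) are immediate and need no summing.
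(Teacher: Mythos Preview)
Your proof is correct and follows essentially the same route as the paper: both take the chain of intermediate ancestors from Lemma~\ref{lem:partial;order}, iterate the quasi-triangle inequality, insert the single-step bound $\rho(z^j_{\gamma_j},z^{j+1}_{\gamma_{j+1}})<C_0\delta^j$, and sum the resulting geometric series in $A_0\delta$ to arrive at $\rho(z^k_\alpha,z^\ell_\beta)<A_0C_0\delta^k/(1-A_0\delta)\leq 2A_0C_0\delta^k$. Your slightly sharper iterated inequality (with $A_0^{n-1}$ on the final term) and your explicit $12/11$ numerology are cosmetic refinements of the same argument.
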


\begin{proof}
Consider the chain 
\begin{equation*}
  (k,\alpha)=(k, \gamma_0)\geq(k+1, \gamma_1)\geq \ldots\geq (k+(\ell-k), \gamma_{\ell-k})=(\ell,\beta)
\end{equation*}
with $ \rho (z_{\gamma_i}^{k+i},z_{\gamma_{i+1}}^{k+i+1})\leq C_0\delta^{k+i}$ for all $i\in \lbrace 0, \ldots , l-k-1 \rbrace$.
By iterating the triangle inequality,
\begin{equation*}
\begin{split}
  \rho (z_\alpha^{k},z_\beta^{\ell})
  & \leq \sum_{i=0}^{k-\ell-1} A_0^{i+1}\rho(z_{\gamma_i}^{k+i},z_{\gamma_{i+1}}^{k+i+1}) \\
  & \leq \sum_{i=0}^{k-\ell-1} A_0^{i+1}C_0\delta^{k+i}<\frac{A_0 C_0\delta^k}{1-A_0\delta}\leq 2A_0C_0\delta^k.\qedhere
\end{split}
\end{equation*}
\end{proof}

\begin{lemma}[Containing balls; \eqref{eq:contain-prelim}(ii) and \eqref{eq:monot-prelim}]
We have $\bar{Q}^k_{\alpha}\subseteq B(z^k_{\alpha},C_1\delta^k)$, and also $B(z^{\ell}_{\beta},C_1\delta^k)\subseteq B(z^k_{\alpha},C_1\delta^k)$ for all $(\ell,\beta)\leq(k,\alpha)$.
\end{lemma}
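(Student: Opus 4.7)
The plan is to handle the two assertions in reversed order: first the monotonicity of the containing balls, and then the containment of $\bar{Q}^k_\alpha$ itself. Both arguments rest on the quasi-triangle inequality together with the smallness condition~\eqref{parameter;conditions}, which in particular forces $A_0\delta\leq 1/(12A_0^2)\leq 1/2$ with room to spare.

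For the monotonicity $B(z^\ell_\beta,C_1\delta^\ell)\subseteq B(z^k_\alpha,C_1\delta^k)$ whenever $(\ell,\beta)\leq(k,\alpha)$, I would induct on the chain length $\ell-k$. The base case $\ell=k$ is trivial since then $\beta=\alpha$ by the uniqueness of parents in Lemma~\ref{lem:partial;order}. By transitivity of $\leq$, the inductive step reduces to the one-step descent: given $(k+1,\gamma)\leq(k,\alpha)$, Lemma~\ref{lem:partial;order} gives $\rho(z^{k+1}_\gamma,z^k_\alpha)<C_0\delta^k$, and for any $y\in B(z^{k+1}_\gamma,C_1\delta^{k+1})$ the quasi-triangle yields
\begin{equation*}
  \rho(y,z^k_\alpha)\leq A_0\bigl(\rho(y,z^{k+1}_\gamma)+\rho(z^{k+1}_\gamma,z^k_\alpha)\bigr)
  <A_0\bigl(C_1\delta^{k+1}+C_0\delta^k\bigr)=C_1\delta^k\bigl(A_0\delta+\tfrac{1}{2}\bigr)\leq C_1\delta^k,
\end{equation*}
where I used $C_1=2A_0C_0$ and $A_0\delta\leq 1/2$. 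Iterating along the chain $(\ell,\beta)\leq(\ell-1,\gamma_{\ell-1})\leq\cdots\leq(k,\alpha)$ then gives the general statement.

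For the containing ball $\bar{Q}^k_\alpha\subseteq B(z^k_\alpha,C_1\delta^k)$, the monotonicity immediately yields the $\hat{Q}^k_\alpha$-level statement: each center $z^\ell_\beta\in\hat{Q}^k_\alpha$ lies trivially in $B(z^\ell_\beta,C_1\delta^\ell)\subseteq B(z^k_\alpha,C_1\delta^k)$, so $\hat{Q}^k_\alpha\subseteq B(z^k_\alpha,C_1\delta^k)$. The delicate step is passing from $\hat{Q}^k_\alpha$ to its topological closure, since in a quasi-metric space open balls need not themselves be open sets and the naive inclusion "closure of a subset of an open ball is in that open ball" can fail. To handle this I would invoke the sharper pointwise bound $\rho(z^\ell_\beta,z^k_\alpha)<A_0C_0\delta^k/(1-A_0\delta)$ made explicit inside the proof of Lemma~\ref{lem:descendant}, which~\eqref{parameter;conditions} forces to be strictly below $C_1\delta^k$; applied to any approximating sequence $z^{\ell_n}_{\beta_n}\to y\in\bar{Q}^k_\alpha$, the quasi-triangle gives $\rho(y,z^k_\alpha)\leq A_0\liminf_n\rho(z^{\ell_n}_{\beta_n},z^k_\alpha)$, and the numerical margin built into~\eqref{parameter;conditions} is what keeps the limit inside the open ball $B(z^k_\alpha,C_1\delta^k)$.

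I expect the main obstacle to be exactly this closure step. The monotonicity part is a clean inductive computation where the parent-child spacing $C_0\delta^k$ enters directly. In contrast, carrying a strict inequality through the quasi-triangle into the topological closure is where the tight form of~\eqref{parameter;conditions} is genuinely needed and where the open-versus-closed-ball distinction (usually invisible in the metric setting) has to be respected with care.
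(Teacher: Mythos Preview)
Your inductive proof of the ball monotonicity is correct and, in fact, slightly slicker than the paper's direct computation (which splits the chain at level $k+1$ and invokes Lemma~\ref{lem:descendant} in one shot). Both yield the same inclusion.

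The closure step, however, has a genuine gap. After the quasi-triangle inequality you obtain
\[
  \rho(y,z^k_\alpha)\leq A_0\liminf_n\rho(z^{\ell_n}_{\beta_n},z^k_\alpha)
  \leq A_0\cdot\frac{A_0C_0\delta^k}{1-A_0\delta},
\]
and then assert that the parameter condition~\eqref{parameter;conditions} keeps this below $C_1\delta^k=2A_0C_0\delta^k$. That would require $A_0/(1-A_0\delta)<2$. But~\eqref{parameter;conditions} only gives $A_0\delta\leq 1/(12A_0^2)\leq 1/12$, hence $A_0/(1-A_0\delta)\leq 12A_0/11$, which is \emph{not} below $2$ once $A_0\geq 2$. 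The extra factor $A_0$ you pick up when passing to the limit is simply too expensive in the genuinely quasi-metric case; no amount of ``numerical margin'' built into~\eqref{parameter;conditions} repairs this, since the constraint there scales with $\delta$, not with $A_0$.

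The paper's remedy is exactly the move you used for monotonicity but did not carry over here: route the estimate through the intermediate level $k+1$. Writing $x_r=z^{\ell}_\beta$ with $(\ell,\beta)\leq(k+1,\gamma)\leq(k,\alpha)$, one bounds
\[
  \rho(z^k_\alpha,x)\leq A_0\,\rho(z^k_\alpha,z^{k+1}_\gamma)+A_0^2\,\rho(z^{k+1}_\gamma,z^{\ell}_\beta)+A_0^2\,\rho(z^{\ell}_\beta,x).
\]
The first term is at most $A_0C_0\delta^k=\tfrac12 C_1\delta^k$; the second is at most $A_0^2\cdot C_1\delta^{k+1}\leq \tfrac12 A_0C_0\delta^k$ using $4A_0^2\delta\leq 1$; and the third is made small by choosing $r$ large. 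The point is that the two terms carrying the dangerous factor $A_0^2$ live at scale $\delta^{k+1}$ or smaller, so the extra $\delta$ absorbs the quasi-metric constant. Your direct limit argument offers no such buffer.
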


\begin{proof}
For the first inclusion, let $x\in\bar{Q}^k_{\alpha}$; hence it is a limit of some $x_r\in\hat{Q}^k_{\alpha}$, $r\in\Z_+$. If $x_r=z^k_{\alpha}$ infinitely often, then also $x=z^k_{\alpha}$, and there is nothing to prove. Otherwise, infinitely many $x_r$ are of the form $z^\ell_\beta$ for some $(\ell,\beta)=(\ell(r),\beta(r))\leq(k+1,\gamma)=(k+1,\gamma(r))\leq(k,\alpha)$, and then
\begin{equation}\label{eq:compu}
\begin{split}
  \rho(z^k_{\alpha},x)
  &\leq A_0\rho(z^k_{\alpha},z^{k+1}_{\gamma})+A_0^2\rho(z^{k+1}_{\gamma},z^{\ell}_{\beta})+
  A_0^2\rho(z^{\ell}_{\beta},x) \\
  &\leq A_0 C_0\delta^k + A_0^2\cdot 2A_0C_0\delta^{k+1}+A_0^2 \rho(x_r,x) \\
  &< A_0 C_0\delta^k+\tfrac12A_0C_0\delta^{k}+\tfrac12A_0 C_0\delta^{k}
  =2A_0C_0\delta^k,
\end{split}
\end{equation}
for such $x_r$ with $r\geq r_0$, since $\ell\geq k+1$ and $4A_0^2\delta\leq 1$.

The inclusion between the balls is clear if $\ell=k$, so let again $(\ell,\beta)\leq(k+1,\gamma)\leq(k,\alpha)$. Let $x\in B(z^{\ell}_{\beta},C_1\delta^{\ell})$. As in \eqref{eq:compu}, we deduce (now using $\rho(z^{\ell}_{\beta},x)\leq C_1\delta^{\ell}$ instead of $\rho(z^{\ell}_{\beta},x)=\rho(z^{\ell}_{\beta},x_r)$) that
\begin{equation*}
  \rho(z^k_{\alpha},x)
  < A_0 C_0\delta^k+A_0^2\cdot 2A_0C_0\delta^{k+1}+A_0^2 \cdot C_1\delta^{\ell}\leq 2A_0C_0\delta^k.\qedhere
\end{equation*}
\end{proof}

\begin{lemma}\label{closed:union}
For any $\Lambda$, $\bigcup_{\alpha\in\Lambda}\bar{Q}^k_{\alpha}$ is the closure of $\bigcup_{\alpha\in\Lambda}\hat{Q}^k_{\alpha}$; in particular, this union is closed.
Hence the open cubes $\displaystyle \tilde{Q}^k_{\alpha}:=\Big(\bigcup_{\gamma\neq\alpha}\bar{Q}^k_{\gamma}\Big)^c$ are indeed open sets.
\end{lemma}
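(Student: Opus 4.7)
The plan is to establish the identity $\overline{\bigcup_{\alpha\in\Lambda}\hat{Q}^k_{\alpha}}=\bigcup_{\alpha\in\Lambda}\bar{Q}^k_{\alpha}$ by showing both containments, the nontrivial one being that the right-hand side is itself closed. The inclusion $\supseteq$ is essentially formal: for each $\alpha$, $\hat{Q}^k_{\alpha}\subseteq\bigcup_{\beta}\hat{Q}^k_{\beta}$, so $\bar{Q}^k_{\alpha}=\overline{\hat{Q}^k_{\alpha}}\subseteq\overline{\bigcup_{\beta}\hat{Q}^k_{\beta}}$, and taking the union over $\alpha\in\Lambda$ gives $\bigcup_{\alpha}\bar{Q}^k_{\alpha}\subseteq\overline{\bigcup_{\alpha}\hat{Q}^k_{\alpha}}$. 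For the reverse $\subseteq$, once we know $\bigcup_{\alpha}\bar{Q}^k_{\alpha}$ is closed, it follows automatically since $\bigcup_{\alpha}\hat{Q}^k_{\alpha}\subseteq\bigcup_{\alpha}\bar{Q}^k_{\alpha}$.

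The heart of the matter is therefore to show that $\bigcup_{\alpha\in\Lambda}\bar{Q}^k_{\alpha}$ is closed. I would argue this by taking a convergent sequence $x_n\to x$ with $x_n\in\bar{Q}^k_{\alpha_n}$ for some $\alpha_n\in\Lambda$, and producing an index $\alpha\in\Lambda$ with $x\in\bar{Q}^k_{\alpha}$. By the previous lemma, $\bar{Q}^k_{\alpha_n}\subseteq B(z^k_{\alpha_n},C_1\delta^k)$, so $\rho(x_n,z^k_{\alpha_n})\leq C_1\delta^k$. Since $x_n\to x$, the quasi-triangle inequality yields $\rho(x,z^k_{\alpha_n})\leq A_0(\rho(x,x_n)+C_1\delta^k)\leq 2A_0 C_1\delta^k$ for $n$ large enough, so eventually all the centres $z^k_{\alpha_n}$ lie in a fixed ball $B(x,R)$ with $R=2A_0C_1\delta^k$.

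Now the separation condition $\rho(z^k_{\alpha},z^k_{\beta})\geq c_0\delta^k$ for $\alpha\neq\beta$ combined with property~\eqref{it:disjoint} (applied with a suitable $\delta$-quotient, or equivalently with property~\eqref{it:separated} producing pairwise disjoint balls $B(z^k_{\alpha_n},c_0\delta^k/(2A_0))$) shows that only finitely many distinct indices $\alpha$ can occur among the $\alpha_n$. Consequently some index $\alpha$ repeats infinitely often, and extracting the corresponding subsequence gives $x_{n_j}\in\bar{Q}^k_{\alpha}$ with $x_{n_j}\to x$. Since $\bar{Q}^k_{\alpha}$ is closed by definition, $x\in\bar{Q}^k_{\alpha}\subseteq\bigcup_{\alpha\in\Lambda}\bar{Q}^k_{\alpha}$, which is the required closedness.

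The last assertion about $\tilde{Q}^k_{\alpha}$ is then immediate: applying what we just proved to the index set $\Lambda=\{\gamma:\gamma\neq\alpha\}$, the union $\bigcup_{\gamma\neq\alpha}\bar{Q}^k_{\gamma}$ is closed, and hence its complement $\tilde{Q}^k_{\alpha}$ is open. The main obstacle is the step where one passes from ``$x$ is a limit of points in arbitrarily many different closed cubes'' to ``$x$ lies in one single closed cube''; this is exactly where the geometric doubling property is indispensable, since without some local finiteness of the family $\{\bar{Q}^k_{\alpha}\}_{\alpha}$ near any given point, an arbitrary union of closed sets need not be closed.
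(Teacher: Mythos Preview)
Your argument is correct and follows essentially the same route as the paper: both proofs establish the identity by reducing to closedness of $\bigcup_{\alpha\in\Lambda}\bar{Q}^k_{\alpha}$, and both obtain that closedness by combining the uniform containment $\bar{Q}^k_{\alpha}\subseteq B(z^k_{\alpha},C_1\delta^k)$ with the separation of the centres and geometric doubling to conclude local finiteness. The only cosmetic difference is that the paper phrases the endgame as ``a finite union of closed sets is closed,'' whereas you extract an explicit constant-index subsequence; the content is identical.
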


\begin{proof}
Let $k\in\Z$ be fixed. It is clear that each $\bar{Q}^k_{\alpha}$, $\alpha\in\Lambda$, is a subset of the closure of $\bigcup_{\alpha\in\Lambda}\hat{Q}^k_{\alpha}$.
From the geometric doubling property and the inclusion $\hat{Q}^k_{\alpha}\subseteq B(z^k_{\alpha},C_1\delta^k)$, it follows readily that a bounded set can intersect at most finitely many different $\hat{Q}^k_{\alpha}$. Hence, if a convergent, thus bounded, sequence of points $x_r$ belong to $\bigcup_{\alpha\in\Lambda}\bar{Q}^k_{\alpha}$, then they belong to some sub-union with a finite $\Lambda_1\subseteq\Lambda$ in place of $\Lambda$. A union of finitely many closed sets is closed, so also the limit of the sequence $(x_r)$ must belong to the same union. Thus all limit points of $\bigcup_{\alpha\in\Lambda}\hat{Q}^k_{\alpha}$ belong to $\bigcup_{\alpha\in\Lambda}\bar{Q}^k_{\alpha}$.
\end{proof}

\begin{lemma}[Unions of closed cubes; \eqref{eq:closed-cover} and \eqref{eq:unionOfClosed}]\label{lemma:closedcovering}
For all $k,\ell\in\Z$ with $\ell>k$, we have
\begin{equation*}
  X=\bigcup_{\alpha}\bar{Q}^k_{\alpha},\qquad\bar{Q}^k_{\alpha}=\bigcup_{\beta:(\ell,\beta)\leq(k,\alpha)}\bar{Q}^{\ell}_{\beta}.
\end{equation*}
\end{lemma}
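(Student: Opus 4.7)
The plan is to prove the two equalities in turn, with the covering statement coming first so that it can be reused for the second.

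For $X = \bigcup_{\alpha}\bar{Q}^k_{\alpha}$, fix $x\in X$ and for each $\ell\geq k$ use the density half of \eqref{ehdot} to select $\beta_\ell$ with $\rho(x,z^\ell_{\beta_\ell})<C_0\delta^\ell$; then $z^\ell_{\beta_\ell}\to x$. Let $(k,\alpha_\ell)$ be the level-$k$ ancestor of $(\ell,\beta_\ell)$ provided by the partial order of Lemma~\ref{lem:partial;order}. By Lemma~\ref{lem:descendant} the distances $\rho(z^k_{\alpha_\ell},z^\ell_{\beta_\ell})<C_1\delta^k$ are uniformly bounded, so the centres $z^k_{\alpha_\ell}$ all sit in a single bounded ball about $x$. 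Combined with the separation in \eqref{ehdot}, the geometric doubling property (item \eqref{it:disjoint} of the set-up) forces only finitely many distinct $\alpha_\ell$ to occur. A pigeonhole argument then yields a fixed $\alpha$ realised by infinitely many $\alpha_\ell$; along the corresponding subsequence $z^{\ell_j}_{\beta_{\ell_j}}\in\hat{Q}^k_\alpha$ converges to $x$, so $x\in\bar{Q}^k_\alpha$.

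For the second equality I would induct on $\ell-k\geq 1$. The inclusion $\supseteq$ is immediate in all cases: transitivity of $\leq$ gives $\hat{Q}^\ell_\beta\subseteq\hat{Q}^k_\alpha$, hence $\bar{Q}^\ell_\beta\subseteq\bar{Q}^k_\alpha$. For the base case $\ell=k+1$, Lemma~\ref{closed:union} asserts that the right-hand side is closed, so it suffices to verify $\hat{Q}^k_\alpha\subseteq\bigcup_{(k+1,\beta)\leq(k,\alpha)}\bar{Q}^{k+1}_\beta$. A point $z^m_\gamma\in\hat{Q}^k_\alpha$ with $m>k$ is handled by the chain of ancestors from $(m,\gamma)$ up to $(k,\alpha)$, whose level-$(k+1)$ entry is automatically a child of $(k,\alpha)$; then $z^m_\gamma\in\hat{Q}^{k+1}_\beta\subseteq\bar{Q}^{k+1}_\beta$. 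The only subtle case is $m=k$, where $z^m_\gamma=z^k_\alpha$: applying the already-proven first assertion at level $k+1$ yields some $\beta$ with $z^k_\alpha\in\bar{Q}^{k+1}_\beta$, and this $\beta$ will turn out to be a child of $\alpha$. For the inductive step I would substitute the base case into the equality at level $\ell$, then use the transitivity $(\ell+1,\gamma)\leq(\ell,\beta)\leq(k,\alpha)\Leftrightarrow(\ell+1,\gamma)\leq(k,\alpha)$ of the unique descent chain to collapse the double union.

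The main obstacle is identifying the $\beta$ above as a child of $\alpha$. I would re-examine the proof of the first assertion when applied to $x=z^k_\alpha$ at level $k+1$: along the subsequence $z^{\ell_j}_{\beta_{\ell_j}}\to z^k_\alpha$ with $(\ell_j,\beta_{\ell_j})\leq(k+1,\beta)$, Lemma~\ref{lem:descendant} gives $\rho(z^{k+1}_\beta,z^{\ell_j}_{\beta_{\ell_j}})<C_1\delta^{k+1}$, and the quasi-triangle inequality yields $\rho(z^{k+1}_\beta,z^k_\alpha)<A_0(C_1\delta^{k+1}+C_0\delta^{\ell_j})$. The hypothesis \eqref{parameter;conditions} implies $A_0 C_1\delta^{k+1}=2A_0^2 C_0\delta\cdot\delta^k<(2A_0)^{-1}c_0\delta^k$ with room to spare (indeed $12A_0^3 C_0\delta\leq c_0$ gives $4A_0^3 C_0\delta\leq c_0/3$), so choosing $j$ large enough absorbs the $C_0\delta^{\ell_j}$ term and produces the strict bound $\rho(z^{k+1}_\beta,z^k_\alpha)<(2A_0)^{-1}c_0\delta^k$. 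The first bullet of Lemma~\ref{lem:partial;order} then delivers $(k+1,\beta)\leq(k,\alpha)$, closing the base case and thus the whole proof.
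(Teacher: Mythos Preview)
Your proof is correct and follows the same overall architecture as the paper's: establish the covering $X=\bigcup_\alpha\bar Q^k_\alpha$ first, then use it at level $k+1$ to locate $z^k_\alpha$ in some $\bar Q^{k+1}_\beta$, and finally check that the distance $\rho(z^{k+1}_\beta,z^k_\alpha)$ is below the threshold $(2A_0)^{-1}c_0\delta^k$ so that $(k+1,\beta)\leq(k,\alpha)$.

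The differences are only tactical. For the covering, the paper simply observes that $\bigcup_\alpha\hat Q^k_\alpha$ contains all $z^\ell_\beta$ with $\ell\geq k$, which are dense by \eqref{ehdot}, and then invokes Lemma~\ref{closed:union} to pass to $\bigcup_\alpha\bar Q^k_\alpha$; your pigeonhole argument achieves the same thing by hand. For the parentage check, the paper uses the already-established containing-ball bound \eqref{eq:contain-prelim}(ii), which gives $\rho(z^k_\alpha,z^{k+1}_\beta)<C_1\delta^{k+1}$ in one line (saving a factor of $A_0$ and avoiding the limit in $\ell_j$); you instead revisit the approximating sequence and use Lemma~\ref{lem:descendant} plus the quasi-triangle inequality. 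Both routes work under \eqref{parameter;conditions}, but the paper's is shorter.
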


\begin{proof}
The union $\bigcup_{\alpha}\hat{Q}^k_{\alpha}$ contains the points $z^{\ell}_{\beta}$ with $\ell\geq k$ and $\beta$ arbitrary, which are dense in $X$ by \eqref{ehdot}. Hence the closure of this union is $X$, but it is also equal to $\bigcup_{\alpha}\bar{Q}^k_{\alpha}$ by Lemma~\ref{closed:union}.

We turn to the second identity, first with $\ell=k+1$. It is clear that
\begin{equation*}
  \hat{Q}^k_{\alpha}=\{z^k_{\alpha}\}\cup\bigcup_{\beta:(k+1,\beta)\leq(k,\alpha)}\hat{Q}^{k+1}_{\beta}; 
\end{equation*}
hence by taking closures with the help of Lemma~\ref{closed:union},
\begin{equation*}
  \bar{Q}^k_{\alpha}=\{z^k_{\alpha}\}\cup\bigcup_{\beta:(k+1,\beta)\leq(k,\alpha)}\bar{Q}^{k+1}_{\beta}.
\end{equation*}
Since the cubes $\bar{Q}^{k+1}_{\beta}$ cover all $X$, it is clear that $z^k_{\alpha}\in\bar{Q}^{k+1}_{\beta}\subseteq B(z^{k+1}_{\beta},C_1\delta^{k+1})$ for some $\beta$, and we only need to check that $(k+1,\beta)\leq(k,\alpha)$. But this follows, using $4A_0^2 C_0\delta\leq c_0$, from
\begin{equation*}
  \rho(z^k_{\alpha},z^{k+1}_{\beta})<C_1\delta^{k+1}=2A_0 C_0\delta\cdot\delta^k\leq\frac{c_0}{2A_0}\delta^k.
\end{equation*}
The case of a general $\ell>k$ follows by an $(\ell-k)$-fold iteration of  the identity for $\ell=k+1$.
\end{proof}

\begin{lemma}\label{lem:ballInside}
$\bar{Q}^k_{\beta}\subseteq B(z^k_{\alpha},c_1\delta^k)^c$ for $\beta\neq\alpha$.
\end{lemma}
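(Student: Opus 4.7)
The plan is to reduce the problem from level $k$ to level $k+1$, where the closed-cube containment has already been established.

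First, by \eqref{eq:unionOfClosed} of Lemma~\ref{lemma:closedcovering}, $\bar Q^k_\beta=\bigcup_{(k+1,\beta')\leq(k,\beta)}\bar Q^{k+1}_{\beta'}$, so any $x\in\bar Q^k_\beta$ lies in $\bar Q^{k+1}_{\beta'}$ for some $\beta'$ with $(k+1,\beta')\leq(k,\beta)$. Next, the already-proven containment \eqref{eq:contain-prelim}(ii) at level $k+1$ gives $\rho(z^{k+1}_{\beta'},x)<C_1\delta^{k+1}$, so $x$ is close to $z^{k+1}_{\beta'}$. At the same time, uniqueness of parents in Lemma~\ref{lem:partial;order} forces $(k+1,\beta')\not\leq(k,\alpha)$ (because $\beta\neq\alpha$), and the contrapositive of the first bullet of that lemma then yields $\rho(z^{k+1}_{\beta'},z^k_\alpha)\geq c_0\delta^k/(2A_0)$; the centre $z^{k+1}_{\beta'}$ is far from $z^k_\alpha$.

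Combining these via the quasi-triangle inequality
\begin{equation*}
\rho(z^{k+1}_{\beta'},z^k_\alpha)\leq A_0\rho(z^{k+1}_{\beta'},x)+A_0\rho(x,z^k_\alpha)
\end{equation*}
and solving for $\rho(x,z^k_\alpha)$, one obtains
\begin{equation*}
\rho(x,z^k_\alpha)\geq\frac{c_0}{2A_0^2}\delta^k-C_1\delta^{k+1}.
\end{equation*}
The hypothesis $12A_0^3C_0\delta\leq c_0$ rewrites as $C_1\delta=2A_0C_0\delta\leq c_0/(6A_0^2)$, and substituting yields $\rho(x,z^k_\alpha)\geq c_0\delta^k/(3A_0^2)=c_1\delta^k$, as required.

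The main obstacle I anticipate is resisting the temptation to argue directly on the preliminary set $\hat Q^k_\beta$ and pass to limits: in a quasi-metric space, balls need not be open and $\rho(z^k_\alpha,\cdot)$ is only lower semicontinuous up to the factor $A_0$, so a naive limit argument loses an extra $A_0$ and would only deliver $\rho(x,z^k_\alpha)\geq c_1\delta^k/A_0$, short of the claim. The step-down to level $k+1$ via \eqref{eq:unionOfClosed} replaces the limit by a direct ball containment at the finer scale and exactly absorbs this stray factor of $A_0$.
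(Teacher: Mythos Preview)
Your proof is correct and follows essentially the same approach as the paper: both reduce to level $k+1$ via \eqref{eq:unionOfClosed}, use the containment $\bar Q^{k+1}_{\beta'}\subseteq B(z^{k+1}_{\beta'},C_1\delta^{k+1})$, invoke the first bullet of Lemma~\ref{lem:partial;order} together with uniqueness of parents, and finish with the same quasi-triangle estimate and the same use of $12A_0^3C_0\delta\leq c_0$. The only cosmetic difference is that the paper phrases the final step as a contradiction (assuming a common point of $\bar Q^{k+1}_{\gamma}$ and $B(z^k_\alpha,c_1\delta^k)$ and deducing $(k+1,\gamma)\leq(k,\alpha)$), whereas you argue directly; the arithmetic is identical.
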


\begin{proof}
By Lemma~\ref{lemma:closedcovering}, we need to show that $\bar{Q}^{k+1}_{\gamma}\subseteq B(z^k_{\alpha},c_1\delta^k)^c$ for all $(k+1,\gamma)\leq(k,\beta)$. If not, then $\bar{Q}^{k+1}_{\gamma}\subseteq B(z^{k+1}_{\gamma},C_1\delta^{k+1})$ and $B(z^k_{\alpha},c_1\delta^k)$ have a common point $x$; whence
\begin{equation*}
\begin{split}
  \rho(z^{k+1}_{\gamma},z^k_{\alpha})
  &\leq A_0\rho(z^{k+1}_{\gamma},x)+A_0\rho(x,z^k_{\alpha})
  <A_0 C_1\delta^{k+1}+A_0 c_1\delta^k \\
  &=(2A_0^2 C_0\delta+\frac{c_0}{3A_0})\delta^k
  \leq(\frac{c_0}{6A_0}+\frac{c_0}{3A_0})\delta^k=\frac{c_0}{2A_0}\delta^k,
\end{split}
\end{equation*}
by $12A_0^3 C_0\delta\leq c_0$, and this implies that $(k+1,\gamma)\leq(k,\alpha)$, a contradiction with $\alpha\neq\beta$.
\end{proof}

\begin{lemma}[Nestedness and contained balls; \eqref{eq:nested-prelim} and  \eqref{eq:contain-prelim}(i)]
If $\ell\geq k$, then $\tilde{Q}^{\ell}_{\beta}\subseteq\tilde{Q}^k_{\alpha}$ for $(\ell,\beta)\leq(k,\alpha)$, or
$\tilde{Q}^{\ell}_{\beta}\cap\bar{Q}^k_{\alpha}=\bar{Q}^{\ell}_{\beta}\cap\tilde{Q}^k_{\alpha}=\varnothing$ otherwise.
Moreover, $\hat{Q}^k_{\alpha}\subseteq\tilde{Q}^k_{\alpha}\subseteq\bar{Q}^k_{\alpha}$ and $B(z^k_{\alpha},c_1\delta^k)\subseteq\tilde{Q}^k_{\alpha}$.
\end{lemma}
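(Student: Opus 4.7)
The contained ball $B(z^k_\alpha, c_1\delta^k)\subseteq\tilde{Q}^k_\alpha$ is essentially immediate from Lemma~\ref{lem:ballInside}: that lemma gives $B(z^k_\alpha,c_1\delta^k)\cap \bar{Q}^k_\beta=\varnothing$ for every $\beta\neq\alpha$, and taking the union over $\beta\neq\alpha$ and then complements yields the inclusion. The inclusion $\tilde{Q}^k_\alpha\subseteq \bar{Q}^k_\alpha$ is a direct consequence of the covering $X=\bigcup_\gamma\bar{Q}^k_\gamma$ from Lemma~\ref{lemma:closedcovering}: a point outside $\bigcup_{\gamma\neq\alpha}\bar{Q}^k_\gamma$ must lie in $\bar{Q}^k_\alpha$.

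The workhorse for the remaining assertions is the observation that, since every pair $(\ell,\beta)$ with $\ell>k$ has a \emph{unique} parent, iterating the parent relation shows that $(\ell,\beta)$ has a unique ancestor at level $k$. I would state and use this as a small preliminary remark. Combined with Lemma~\ref{lemma:closedcovering}, which gives the decomposition $\bar{Q}^k_\gamma=\bigcup_{\beta':(\ell,\beta')\leq(k,\gamma)}\bar{Q}^\ell_{\beta'}$, this is essentially all that is needed. For $\hat{Q}^k_\alpha\subseteq\tilde{Q}^k_\alpha$, pick $z^\ell_\beta\in\hat{Q}^k_\alpha$ and assume for contradiction that $z^\ell_\beta\in\bar{Q}^k_\gamma$ for some $\gamma\neq\alpha$. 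By the Lemma~\ref{lemma:closedcovering} decomposition (applied at levels $k\leq\ell$), $z^\ell_\beta\in\bar{Q}^\ell_{\beta'}$ for some $(\ell,\beta')\leq(k,\gamma)$. If $\beta'=\beta$, then both $(k,\alpha)$ and $(k,\gamma)$ are ancestors of $(\ell,\beta)$ at level $k$, contradicting uniqueness; if $\beta'\neq\beta$, then Lemma~\ref{lem:ballInside} applied at scale $\ell$ already places $z^\ell_\beta$ outside $\bar{Q}^\ell_{\beta'}$.

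For the nestedness dichotomy I would split into the two cases of the statement. Suppose first $(\ell,\beta)\leq(k,\alpha)$ and take $x\in\tilde{Q}^\ell_\beta$. If $x\in\bar{Q}^k_\gamma$ with $\gamma\neq\alpha$, then $x\in\bar{Q}^\ell_{\beta'}$ for some $(\ell,\beta')\leq(k,\gamma)$ by Lemma~\ref{lemma:closedcovering}; by uniqueness of ancestors at level $k$, we must have $\beta'\neq\beta$, contradicting $x\in\tilde{Q}^\ell_\beta=\bigl(\bigcup_{\beta'\neq\beta}\bar{Q}^\ell_{\beta'}\bigr)^c$. Suppose now $(\ell,\beta)\not\leq(k,\alpha)$, and let $(k,\alpha')$ be the (unique) ancestor of $(\ell,\beta)$; by hypothesis $\alpha'\neq\alpha$. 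For $\bar{Q}^\ell_\beta\cap\tilde{Q}^k_\alpha=\varnothing$, note that $\bar{Q}^\ell_\beta\subseteq\bar{Q}^k_{\alpha'}\subseteq\bigcup_{\gamma\neq\alpha}\bar{Q}^k_\gamma=(\tilde{Q}^k_\alpha)^c$. For $\tilde{Q}^\ell_\beta\cap\bar{Q}^k_\alpha=\varnothing$, take any $x\in\bar{Q}^k_\alpha$; then $x\in\bar{Q}^\ell_{\beta''}$ for some $(\ell,\beta'')\leq(k,\alpha)$, and since $(\ell,\beta)\not\leq(k,\alpha)$ we have $\beta''\neq\beta$, so $x\in\bigcup_{\beta''\neq\beta}\bar{Q}^\ell_{\beta''}=(\tilde{Q}^\ell_\beta)^c$.

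The arguments are largely bookkeeping once the uniqueness-of-ancestor observation and Lemma~\ref{lemma:closedcovering} are in hand. The only point requiring a modicum of care is making sure that the two symmetric disjointness assertions in the ``otherwise'' case are handled separately, since one uses the ancestor of $(\ell,\beta)$ at level $k$ while the other uses the decomposition of $\bar{Q}^k_\alpha$ into level-$\ell$ pieces. No new quantitative estimate beyond Lemma~\ref{lem:ballInside} is needed.
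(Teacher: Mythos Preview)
Your proof is correct and uses the same ingredients as the paper (Lemma~\ref{lem:ballInside}, Lemma~\ref{lemma:closedcovering}, and uniqueness of the level-$k$ ancestor). The only difference is organizational: the paper first proves the nestedness $\tilde{Q}^\ell_\beta\subseteq\tilde{Q}^k_\alpha$ via the single chain of complements
\[
(\tilde{Q}^k_\alpha)^c=\bigcup_{\gamma\neq\alpha}\bar{Q}^k_\gamma
=\bigcup_{\eta:(\ell,\eta)\not\leq(k,\alpha)}\bar{Q}^\ell_\eta
\subseteq\bigcup_{\eta\neq\beta}\bar{Q}^\ell_\eta=(\tilde{Q}^\ell_\beta)^c,
\]
and then reuses this inclusion twice: once to get $\hat{Q}^k_\alpha\subseteq\tilde{Q}^k_\alpha$ in one line (via $z^\ell_\beta\in B(z^\ell_\beta,c_1\delta^\ell)\subseteq\tilde{Q}^\ell_\beta\subseteq\tilde{Q}^k_\alpha$), and once to get the disjointness $\tilde{Q}^\ell_\beta\cap\bar{Q}^k_\alpha=\varnothing$ in the ``otherwise'' case (via $\tilde{Q}^\ell_\beta\subseteq\tilde{Q}^k_{\alpha'}\subseteq(\bar{Q}^k_\alpha)^c$). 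You instead prove these two facts from scratch by separate contradiction arguments, which is a bit longer but entirely valid. Your pointwise approach has the virtue of being more transparent about where uniqueness of ancestors enters; the paper's complement chain is more compact.
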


\begin{proof}
Let first $(\ell,\beta)\leq(k,\alpha)$. Then
\begin{equation*}
  (\tilde{Q}^k_{\alpha})^c
  =\bigcup_{\gamma\neq\alpha}\bar{Q}^k_{\gamma}
  =\bigcup_{\gamma\neq\alpha}\bigcup_{\eta:(\ell,\eta)\leq(k,\gamma)}\bar{Q}^{\ell}_{\eta}
  =\bigcup_{\eta:(\ell,\eta)\not\leq(k,\alpha)}\bar{Q}^{\ell}_{\eta}
  \subseteq\bigcup_{\eta\neq\beta}\bar{Q}^{\ell}_{\eta}
  =(\tilde{Q}^{\ell}_{\beta})^c;
\end{equation*}
hence $\tilde{Q}^{\ell}_{\beta}\subseteq\tilde{Q}^k_{\alpha}$. By Lemma~\ref{lem:ballInside}, we have
\begin{equation*}
  B(z^{\ell}_{\beta},c_1\delta^{\ell})^c\supseteq\bigcup_{\eta\neq\beta}\bar{Q}^{\ell}_{\eta}=(\tilde{Q}^{\ell}_{\beta})^c;
\end{equation*}
thus $z^{\ell}_{\beta}\in B(z^{\ell}_{\beta},c_1\delta^{\ell})\subseteq\tilde{Q}^{\ell}_{\beta}\subseteq\tilde{Q}^k_{\alpha}$. This gives both $\hat{Q}^k_{\alpha}\subseteq\tilde{Q}^k_{\alpha}$ and $B(z^k_{\alpha},c_1\delta^k)\subseteq\tilde{Q}^k_{\alpha}$.

To see that $\tilde{Q}^k_{\alpha}\subseteq\bar{Q}^k_{\alpha}$, observe from Lemma~\ref{lemma:closedcovering} that $X=\bigcup_{\alpha}\bar{Q}^k_{\alpha}=\bar{Q}^k_{\alpha}\cup\Big(\bigcup_{\beta\neq\alpha}\bar{Q}^k_{\beta}\Big)$, so that
\begin{equation*}
  \tilde{Q}^k_{\alpha}=\Big(\bigcup_{\beta\neq\alpha}\bar{Q}^k_{\beta}\Big)^c\subseteq\bar{Q}^k_{\alpha}.
\end{equation*}

Let then $(\ell,\beta)\not\leq(k,\alpha)$, and thus $(\ell,\beta)\leq(k,\gamma)$ for some $\gamma\neq\alpha$. By what was already proven, $\tilde{Q}^{\ell}_{\beta}\subseteq\tilde{Q}^k_{\gamma}\subseteq(\bar{Q}^k_{\alpha})^c$, and (taking closures) $\bar{Q}^{\ell}_{\beta}\subseteq\bar{Q}^k_{\gamma}\subseteq\bigcup_{\eta\neq\alpha}\bar{Q}^k_{\eta}=(\tilde{Q}^k_{\alpha})^c$.
\end{proof}

\begin{lemma}[Closure and interior; \eqref{eq:interior-closure}]\label{lem:closure_interior}
The cubes $\bar{Q}^k_{\alpha}$ and $\tilde{Q}^k_{\alpha}$ are each other's closure and interior.
\end{lemma}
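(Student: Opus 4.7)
\bigskip

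\noindent\textbf{Proof plan.}
The plan is to verify the two set-theoretic equalities separately, using only the three facts already established in the preceding lemmas, namely: (a) $\hat Q^k_\alpha \subseteq \tilde Q^k_\alpha \subseteq \bar Q^k_\alpha$ with $\bar Q^k_\alpha$ closed, $\tilde Q^k_\alpha$ open, and $\bar Q^k_\alpha=\overline{\hat Q^k_\alpha}$ by construction; (b) $X=\bigcup_\gamma \bar Q^k_\gamma$ (Lemma~\ref{lemma:closedcovering}); and (c) the disjointness property following from the Nestedness lemma, i.e.\ $\tilde Q^k_\beta\cap\bar Q^k_\alpha=\varnothing$ whenever $\beta\ne\alpha$, together with the monotonicity $\tilde Q^\ell_\eta\subseteq\tilde Q^k_\beta$ for $(\ell,\eta)\le(k,\beta)$ and the inclusion $z^\ell_\eta\in B(z^\ell_\eta,c_1\delta^\ell)\subseteq\tilde Q^\ell_\eta$.

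\smallskip

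\noindent\emph{Step 1: $\overline{\tilde Q^k_\alpha}=\bar Q^k_\alpha$.} Since $\tilde Q^k_\alpha\subseteq\bar Q^k_\alpha$ and $\bar Q^k_\alpha$ is closed, taking closures gives the $\subseteq$ direction immediately. Conversely, the inclusion $\hat Q^k_\alpha\subseteq\tilde Q^k_\alpha$ from the preceding lemma yields
\begin{equation*}
   \bar Q^k_\alpha=\overline{\hat Q^k_\alpha}\subseteq \overline{\tilde Q^k_\alpha},
\end{equation*}
closing the loop. This step is essentially formal.

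\smallskip

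\noindent\emph{Step 2: $\tilde Q^k_\alpha$ is the interior of $\bar Q^k_\alpha$.} Since $\tilde Q^k_\alpha$ is open (Lemma~\ref{closed:union}) and contained in $\bar Q^k_\alpha$, it is contained in the interior. For the reverse inclusion, I argue by contradiction: suppose $x$ belongs to the interior of $\bar Q^k_\alpha$ but $x\notin\tilde Q^k_\alpha$. By the definition of $\tilde Q^k_\alpha$, the second condition gives $x\in\bar Q^k_\beta$ for some $\beta\ne\alpha$, so $x=\lim_{j\to\infty}z^{\ell_j}_{\eta_j}$ for a sequence of points with $(\ell_j,\eta_j)\le(k,\beta)$. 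By property (c) above, each such center satisfies
\begin{equation*}
   z^{\ell_j}_{\eta_j}\in\tilde Q^{\ell_j}_{\eta_j}\subseteq \tilde Q^k_\beta\subseteq(\bar Q^k_\alpha)^c.
\end{equation*}
On the other hand, being in the interior of $\bar Q^k_\alpha$ means (by the paper's definition of openness) that there is $\eps>0$ with $B(x,\eps)\subseteq \bar Q^k_\alpha$; but $z^{\ell_j}_{\eta_j}\to x$ forces $z^{\ell_j}_{\eta_j}\in B(x,\eps)\subseteq\bar Q^k_\alpha$ for all sufficiently large $j$, contradicting the previous display.

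\smallskip

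\noindent\emph{Expected obstacle.} There is no real obstacle: all the work has been done in the preceding lemmas, and the present statement is essentially a bookkeeping consequence. The one point to watch is that balls in a quasi-metric space need not be open, so the only correct use of the definition of ``interior'' is to extract some $\eps$-ball contained in $\bar Q^k_\alpha$ (which is what the paper's definition of openness directly provides), and then to use this ball only as a set that eventually contains the convergent sequence, not as an open neighbourhood.
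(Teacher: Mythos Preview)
Your proof is correct and follows essentially the same approach as the paper. Step~1 is identical; for Step~2 the paper packages your sequence argument into a single closure computation via Lemma~\ref{closed:union} (namely $\overline{\bigcup_{\beta\neq\alpha}\hat Q^k_\beta}=\bigcup_{\beta\neq\alpha}\bar Q^k_\beta=(\tilde Q^k_\alpha)^c\subseteq\overline{(\bar Q^k_\alpha)^c}$, then complement), but the underlying idea---that every $z^{\ell}_{\eta}\in\hat Q^k_\beta$ lies in $\tilde Q^k_\beta\subseteq(\bar Q^k_\alpha)^c$---is exactly what you use pointwise.
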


\begin{proof}
From $\hat{Q}^k_{\alpha}\subseteq\tilde{Q}^k_{\alpha}\subseteq\bar{Q}^k_{\alpha}$ and the fact that $\bar{Q}^k_{\alpha}$ is the closure of $\hat{Q}^k_{\alpha}$, it is clear that it is also the closure of $\tilde{Q}^k_{\alpha}$.

Concerning the interior, it is clear that the open set $\tilde{Q}^k_{\alpha}\subseteq\bar{Q}^k_{\alpha}$ is a subset of the interior of $\bar{Q}^k_{\alpha}$. For the other direction, observe that $\hat{Q}^k_{\beta}\subseteq\tilde{Q}^k_{\beta}\subseteq(\bar{Q}^k_{\alpha})^c$ for all $\beta\neq\alpha$; hence
\begin{equation*}
  \overline{(\bar{Q}^k_{\alpha})^c}\supseteq\overline{\bigcup_{\beta\neq\alpha}\hat{Q}^k_{\beta}}
  =\bigcup_{\beta\neq\alpha}\bar{Q}^k_{\beta}=(\tilde{Q}^k_{\alpha})^c.
\end{equation*}
Thus the interior of $\bar{Q}^k_{\alpha}$, which is the complement of $\overline{(\bar{Q}^k_{\alpha})^c}$, is a subset of $(\tilde{Q}^k_{\alpha})^{cc}=\tilde{Q}^k_{\alpha}$.
\end{proof}

\section{Adjacent dyadic systems}\label{sec:adjacent}
In this section we will prove the following theorem.

\begin{theorem}\label{thm:adjacent;systems}
Given a set of reference points $\{x^k_\alpha \},k\in \Z, \alpha\in\mathscr{A}_k$, suppose that constant $\delta\in (0,1)$ satisfies $96A_0^6\delta\leq 1$. Then there exists a finite collection of families $\mathscr{D}^t$, $t = 1, 2, \ldots , K=K(A_0, A_1, \delta) <\infty$, where each $\mathscr{D}^t$ is a collection of dyadic cubes, associated to dyadic points $\{z^k_\alpha \},k\in \Z, \alpha\in\mathscr{A}_k$, with the properties \eqref{eq:open-closed}--\eqref{eq:monotone} of Theorem~\ref{thm:cubes}. In addition, the following property is satisfied: 
\begin{equation}\label{property:ball;included}
\hspace*{-0.145cm}\text{for every ball $B=B(x,r)\subseteq X$, there exists $t$ and $Q\in \mathscr{D}^t$ with }
B\subseteq Q\text{ and } \diam (Q)\leq Cr.
\end{equation}
The constant $C<\infty$ in \eqref{property:ball;included} only depends on the quasi-metric constant $A_0$ and parameter $\delta$.
\end{theorem}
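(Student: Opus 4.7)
The plan is to obtain the $K$ adjacent systems by selecting, for each scale $k$ and each $t\in\{1,\ldots,K\}$, new dyadic centres $\{z_\alpha^{k,t}\}_\alpha$ from among the given reference points $\{x^{j}_\gamma\}$ at a suitably finer level $j\geq k+1$, the selection being governed by a combinatorial labelling. The labelling has a twofold purpose: to guarantee that each individual family $\{z_\alpha^{k,t}\}_\alpha$ satisfies the hypotheses \eqref{ehdot} of Theorem~\ref{thm:cubes}, and to ensure that for every $x\in X$ and every scale, at least one of the $K$ systems places a centre very close to $x$ --- which is what \eqref{property:ball;included} demands. Once the centres are in hand, each $\mathscr{D}^{t}$ is produced by direct application of Theorem~\ref{thm:cubes}.

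Concretely, I would first assign to each reference point $x_\gamma^{j}$ a label $\ell(j,\gamma)\in\{1,\ldots,K\}$. The geometric doubling property implies that any ball of radius proportional to $\delta^{k}$ contains at most a bounded number of reference points at a fixed finer level, so a greedy colouring applied scale-by-scale yields a labelling with $K=K(A_0,A_1,\delta)<\infty$ such that same-labelled points at the chosen finer level are pairwise at distance $\geq c_0\delta^{k}$ for an appropriate $c_0$. Next, for each $t$ and $k$, I would take $\{z_\alpha^{k,t}\}_\alpha$ to consist of the label-$t$ reference points at that finer level, possibly completed maximally to enforce the upper-bound ($C_0\delta^{k}$-net) half of \eqref{ehdot}. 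The hypothesis $96A_0^6\delta\leq 1$ leaves enough margin to verify \eqref{parameter;conditions} for the resulting pair $(c_0,C_0)$, so Theorem~\ref{thm:cubes} applied to $\{z_\alpha^{k,t}\}_{k,\alpha}$ produces each dyadic system $\mathscr{D}^{t}$ together with its half-open cubes $Q_\alpha^{k,t}$. Finally, for the covering property, given $B=B(x,r)\subseteq X$ I would pick $k\in\Z$ with $\delta^{k+1}\leq r$ in a specific proportion, locate a finer-level reference point $x_\gamma^{j}$ within (roughly) $\delta^{k+1}$ of $x$, set $t:=\ell(j,\gamma)$, and note that $x_\gamma^{j}$ is then among the centres $z_\alpha^{k,t}$ of $\mathscr{D}^{t}$. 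Property \eqref{eq:contain} and the quasi-triangle inequality together yield $B\subseteq B(z_\alpha^{k,t},c_1\delta^{k})\subseteq Q_\alpha^{k,t}$, while $\diam(Q_\alpha^{k,t})\leq 2C_1\delta^{k}\leq Cr$ by \eqref{eq:contain} and the choice of $k$.

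The main obstacle I foresee is in the labelling step: ensuring that each individual label class is \emph{simultaneously} $c_0\delta^{k}$-separated \emph{and} a $C_0\delta^{k}$-net of $X$. A naive greedy proper colouring guarantees the separation but not the per-colour density. I expect to resolve this by working at a sufficiently fine scale $k+m$ with $m=m(A_0,\delta)$ chosen so that every ball of radius $\sim C_0\delta^{k}$ contains many reference points at level $k+m$, and then using a balanced colouring that forces all $K$ colours to appear in each such ball. Tracking the constants through this refinement --- in particular matching $c_1=(3A_0^{2})^{-1}c_0$ and $C_1=2A_0C_0$ from Theorem~\ref{thm:cubes} against the repeated quasi-triangle factors of $A_0$ needed to pass from $x$ through the approximating reference points into $B(z_\alpha^{k,t},c_1\delta^{k})$ --- is precisely what forces the strengthened smallness assumption $96A_0^{6}\delta\leq 1$, rather than the weaker $12A_0^{3}C_0\delta\leq c_0$ used in Theorem~\ref{thm:cubes} alone.
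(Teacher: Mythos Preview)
Your overall strategy matches the paper's: label the finer-scale reference points, let each label class supply the centres of one adjacent system, apply Theorem~\ref{thm:cubes}, and for the covering property use the label of the reference point nearest $x$ to pick $t$. The maximal-completion fix you mention for the net condition in fact works as stated---completion only \emph{adds} centres, so the label-$t$ reference point nearest $x$ remains a centre in system $t$ and your covering argument goes through unchanged---which makes the balanced-colouring alternative both unnecessary and harder to justify in a general quasi-metric space.

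The paper's execution differs by a device that avoids completion altogether and preserves the index set $\mathscr{A}_k$. A two-level labelling is used: each $x^k_\alpha$ receives a primary label $\ell\in\{0,\ldots,L\}$ so that ``neighbours'' (points whose level-$(k{+}1)$ children could come within $c_0\delta^{k}$ of each other, $c_0=(4A_0^2)^{-1}$) carry distinct primary labels; then each child $x^{k+1}_\beta$ of $x^k_\alpha$ receives a duplex label $(\ell,m)$ with $m\in\{1,\ldots,M\}$ enumerating the siblings. For fixed $t=(\ell,m)$ the new centre ${}^t z^k_\alpha$ is the child of $x^k_\alpha$ with duplex label $(\ell,m)$ if one exists, and \emph{otherwise} any child within $\delta^{k+1}$ of $x^k_\alpha$. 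This ``otherwise'' clause is the trick absent from your sketch: it keeps every new centre within $\delta^k$ of the corresponding reference point, so the $C_0\delta^k$-net property (with $C_0=2A_0$) is inherited directly from the reference points without any completion, while the primary labelling handles the $c_0\delta^k$-separation. The resulting bijection $x^k_\alpha\leftrightarrow{}^t z^k_\alpha$, lost under your completion step, is flagged in the paper's introduction as a feature exploited in applications.
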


We will also prove the following variant of Theorem~\ref{thm:adjacent;systems}:

\begin{proposition}\label{prop:adjacent;systems}
Given a fixed point $x_0\in X$, there exists a finite collection of families $\mathscr{D}^t$, $t = 1, 2, \ldots , K=K(A_0, A_1, \delta) <\infty$, where each $\mathscr{D}^t$ is a collection of dyadic cubes with the properties \eqref{eq:open-closed}--\eqref{eq:monotone} of Theorem~\ref{thm:cubes}, and the property (\ref{property:ball;included}) of Theorem~\ref{thm:adjacent;systems} is satisfied. In addition, the following property is satisfied: For every $t = 1, 2, \ldots , K$, for every $k$, there exists $\alpha$ such that $x_0=z^k_\alpha$, the center point of $Q^k_\alpha\in \mathscr{D}^t$. 
\end{proposition}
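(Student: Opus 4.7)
The plan is to derive Proposition~\ref{prop:adjacent;systems} by a two-step modification of the construction from Theorem~\ref{thm:adjacent;systems}. First, I would invoke the flexibility established in Subsection~\ref{existence;dyadicpoints}: the maximal $c_0\delta^k$-separated and $C_0\delta^k$-dense reference set $\{x^k_\alpha\}_\alpha$ may be chosen so that $x_0$ appears as one of its elements for every $k\in\Z$. This puts $x_0$ on the table as a candidate centre at every scale.

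Second, I would revisit the construction underlying Theorem~\ref{thm:adjacent;systems}, where each system $\mathscr{D}^t$ is built by selecting its new centre points $z^k_\alpha$ from among the one-level-finer reference points $x^{k+1}_\beta$ via a labelling procedure designed to avoid conflicts (two selected centres ending up too close). For every system $t\in\{1,\dots,K\}$ and every $k\in\Z$, I would pre-commit $x_0$ as one of the selected centres and only then run the labelling on the remaining candidates. This forced inclusion only removes from eligibility the boundedly many candidates within distance $c_0\delta^k$ of $x_0$ (a bounded number by the geometric doubling property), so the separation condition in \eqref{ehdot} is preserved, the covering condition is preserved because $x_0$ itself supplies a centre in its own vicinity while the rest of space is still covered by the remaining unpinned selections, and the bound $K=K(A_0,A_1,\delta)$ on the number of systems is unaffected.

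Having arranged $x_0=z^k_{\alpha(k,t)}$ for every $k$ and $t$, Theorem~\ref{thm:cubes} applied to each modified centre system produces the three cube families with the properties \eqref{eq:open-closed}--\eqref{eq:monotone}, and the covering property \eqref{property:ball;included} is inherited verbatim from the proof of Theorem~\ref{thm:adjacent;systems}, since that argument depends only on the coarse geometric and labelling structure, not on which specific points play the role of the centres. The main obstacle I anticipate is verifying that the labelling scheme controlling the number of adjacent systems still functions under the forced inclusion of $x_0$: one needs to check that a consistent chain of labels can be assigned to $x_0$ across all levels $k$ for each system $\mathscr{D}^t$ without exhausting the label budget that underlies the bound $K$. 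Once this compatibility is confirmed, the proposition follows as a transparent variant of Theorem~\ref{thm:adjacent;systems}.
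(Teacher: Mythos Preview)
Your overall architecture matches the paper's: choose reference points with $x_0$ among them at every level (as in \ref{existence;dyadicpoints}), then override the selection rule at the distinguished point by decreeing $z^k_\alpha := x_0$ whenever $x^k_\alpha = x_0$, and run the ordinary specific selection rule elsewhere. Since $x_0 = x^{k+1}_\gamma$ is itself a child of $x^k_\alpha$ at distance $0 < \delta^{k+1}$, this override is still an instance of the \emph{general} selection rule, so Lemma~\ref{dyadic_points} immediately delivers the separation and covering properties of \eqref{ehdot}, and Theorem~\ref{thm:cubes} gives \eqref{eq:open-closed}--\eqref{eq:monotone}. Your anticipated obstacle about label-budget compatibility is therefore not a genuine issue.

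The real gap is in your claim that property~\eqref{property:ball;included} is ``inherited verbatim.'' It is not. The proof of Lemma~\ref{prop;cubeandball} hinges on the identity $x^{k+1}_\beta = {}^t z^k_\alpha$ for $t = \lab_2(k+1,\beta)$, which is precisely what the \emph{specific} selection rule guarantees. The distinguished-point override breaks this identity whenever the parent $(k,\alpha)$ of $(k+1,\beta)$ satisfies $x^k_\alpha = x_0$: in that case ${}^t z^k_\alpha = x_0$ for \emph{every} $t$, and there is no reason for $x^{k+1}_\beta$ to equal $x_0$. The paper's remedy (Lemma~\ref{dyadic_points_x0}) is a case split: if the parent is $x_0$, one climbs one further level to the cube ${}^t Q^{k-1}_\gamma$ centered at $x_0$ (using that $z^{k-1}_\gamma = x_0$ in every system) and checks directly that $B(x,r) \subseteq B(x_0, c_1\delta^{k-1})$; the constant in \eqref{property:ball;included} becomes $8A_0^3\delta^{-3}$ rather than $8A_0^3\delta^{-2}$. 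If the parent is not $x_0$, the original argument of Lemma~\ref{prop;cubeandball} applies unchanged. Without this case analysis your argument is incomplete.
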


\subsection{Reference dyadic points} 
Recall from \ref{existence;dyadicpoints} that for every $k\in \Z$ there exists a point set $\{ x^k_\alpha\}_{\alpha\in\mathscr{A}_k}$ such that
\[\rho(x_{\alpha}^k,x_{\beta}^k)\geq \delta^k\quad(\alpha\neq\beta),\qquad
  \min_{\alpha}\rho(x,x^k_{\alpha})<\delta^k\quad\forall\,x\in X. \]
We will refer to the set $\{x^k_\alpha \}_{k,\alpha}$ of dyadic points as the set of \textit{reference points}.

Suppose that $\delta\in (0,1)$ satisfies $96A_0^6\delta\leq 1$, and set $c_0:=(4A_0^2)^{-1}$. In particular, $\delta<c_0$ and 
\begin{equation}\label{def;b0}
\frac{1}{2A_0^2}-\delta >\frac{1}{2A_0^2}-c_0=c_0. 
\end{equation}

\begin{definition}\label{konflikti} Reference points $x_{\alpha}^k$ and $x_{\beta}^k$, $\alpha\neq\beta$, of same generation are \textit{in conflict}  if
\[\rho(x_{\alpha}^k,x_{\beta}^k,)< c_0\delta^{k-1}.\]
\end{definition}

\begin{definition} Reference points $x_{\alpha}^k$ and $x_{\beta}^k$, $\alpha\neq\beta$, of same generation are \textit{neighbours} if there occurs a conflict between their children. More precisely, the points  $x_{\alpha}^k$ and $x_{\beta}^k$, $\alpha\neq\beta$, are neighbours  if there exist points $(k+1,\gamma)\leq(k,\alpha)$ and $(k+1, \sigma)\leq(k,\beta)$ such that
$\rho(x_{\gamma}^{k+1},x_{\sigma}^{k+1})< c_0\delta^{k}$.
\end{definition}

Recall from \ref{lem:partial;order} that due to the doubling property, a dyadic point can have at most $M$ children (a constant independent of the point). By similar arguments, also the number of neighbours of a dyadic point is bounded from above by a fixed constant, say $L$.

\subsection{Labeling of the reference points}\label{definition:labels}
Fix $k\in \Z$. We label the reference points $x_{\alpha}^{k}$ of generation $k$ as follows: Begin with some index pair $(k,\alpha)$ and label it with number $0$. Then, for every $(k,\beta)$, $\beta\neq \alpha$, check whether any of its neighbours (boundedly many) already have a label. If not, label it with number $0$. Otherwise, pick the \textit{smallest} positive integer not yet in use among the neighbours. As the number of neighbours a point can have is bounded above by constant $L$, every point $x^k_\alpha$ gets a \textit{primary label} $\lab_1(k,\alpha):=\ell$ not bigger than $L$. Furthermore, we have the following: if $(k,\alpha)$ and $(k,\beta)$, $\alpha\neq\beta$, have the same label $\ell\in \{0, \ldots ,L\}$, they are not neighbours.

Next we label the reference points $x_{\gamma}^{k+1}$ of the following generation $k+1$ with duplex labels: If $\lab_1(k,\alpha)=\ell$, each of its children $(k+1,\beta)\leq (k,\alpha)$ (boundedly many) gets a different \textit{duplex label} $\lab_2(k+1,\beta):=(\ell,m), m=m(\beta)\in\{1,2,\ldots ,M \}$. Then we have the following: if $(k+1,\gamma)$ and $(k+1,\sigma)$, $\gamma\neq\sigma$, have the same primary label $\ell\in \{0, \ldots ,L\}$, they are not in conflict.

We next define new dyadic points $z^k_\alpha$ of generation $k$ by selecting them from the set of reference points of generation $k+1$. We will first allow this selection with only little restriction and then consider a more specific choice.

\begin{definition}[General selection rule]\label{general:rule}
For every $k\in\Z$, pick $\ell=\ell_k\in \{ 0, \ldots ,L\}$ which we refer to as the master label. For every $\alpha$, check if $\lab_1(k,\alpha)=\ell$. If so, pick any $(k+1,\beta)\leq (k,\alpha)$ and declare that $(k,\alpha)\searrow (k+1,\beta)$ and $(k,\alpha)\not\searrow (k+1,\gamma)$ for every  $\gamma\neq \beta$. Also decree that $z^k_\alpha:=x^{k+1}_\beta$. 

Otherwise, pick some $(k+1,\beta)\leq (k,\alpha)$ with $\rho(x^{k+1}_\beta,x^k_\alpha)<\delta^{k+1}$ (such a child always exists) and declare that $(k,\alpha)\searrow (k+1,\beta)$ and $(k,\alpha)\not\searrow (k+1,\gamma)$ for every  $\gamma\neq \beta$. Also decree that $z^k_\alpha:=x^{k+1}_\beta$. Note that, by construction, every $(k,\alpha)$ is related to some $(k+1,\beta)$ by the relation $\searrow$ but it may or may not be related to any $(k-1,\sigma)$. 
\end{definition}

The point sets obtained from the reference points by the general selection rule have the distribution property \eqref{ehdot} of Theorem~\ref{thm:cubes}:

\begin{lemma}\label{dyadic_points} 
Set $c_0:=(4A_0^2)^{-1}$ and $C_0:=2A_0$. Let $\{z^k_\alpha\}$ be the set of new dyadic points obtained by the general selection rule from the reference points. Then, for every $k\in \Z$ we have
\begin{equation*}\label{distribution;points}
\rho (z_\alpha^k, z_\beta^k)\geq c_0\delta^k, \, \alpha\neq \beta , 
\end{equation*}
and for every $x\in X$ and every $k\in \Z$ we find $\alpha$ such that
\begin{equation*}\label{distribution;point}
\rho (x,z_\alpha^k)< C_0\delta^k . 
\end{equation*}
\end{lemma}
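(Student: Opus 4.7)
The plan is to establish the two inequalities separately, with the upper bound being a direct quasi-triangle computation and the lower bound requiring a case split dictated by the labeling.

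First I would handle the covering inequality $\rho(x,z^k_\alpha) < C_0\delta^k$. This part is short: the reference points satisfy \eqref{ehdot} with $c_0 = C_0 = 1$, so some $\alpha$ has $\rho(x,x^k_\alpha) < \delta^k$; since $z^k_\alpha = x^{k+1}_\gamma$ is by construction always a child of $x^k_\alpha$ in the reference system, the second bullet of Lemma~\ref{lem:partial;order} gives $\rho(x^k_\alpha, z^k_\alpha) < \delta^k$, and one application of the quasi-triangle inequality yields $\rho(x,z^k_\alpha) < 2A_0\delta^k = C_0\delta^k$.

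For the separation inequality $\rho(z^k_\alpha, z^k_\beta) \geq c_0\delta^k$ with $\alpha \neq \beta$, I would split on whether both $(k,\alpha)$ and $(k,\beta)$ carry the master label $\ell_k$. If both do, then by construction they cannot be neighbours, which by the definition of \emph{neighbour} means no conflict occurs between any pair of their children; applied to the chosen children $z^k_\alpha$ and $z^k_\beta$ this immediately gives $\rho(z^k_\alpha,z^k_\beta) \geq c_0\delta^k$. If at least one of them, say $\alpha$, does not carry the master label, then the selection rule specifically forces $\rho(x^k_\alpha,z^k_\alpha) < \delta^{k+1}$, so $z^k_\alpha$ is essentially pinned to $x^k_\alpha$. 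On the other hand $z^k_\beta = x^{k+1}_\sigma$ has parent $(k,\beta) \neq (k,\alpha)$, so by uniqueness of the parent and the contrapositive of the first bullet of Lemma~\ref{lem:partial;order} applied to the reference points (where $c_0 = 1$), we obtain the repulsion $\rho(x^{k+1}_\sigma,x^k_\alpha) \geq (2A_0)^{-1}\delta^k$. A single quasi-triangle estimate then gives
\begin{equation*}
  \rho(z^k_\alpha,z^k_\beta) \geq A_0^{-1}\rho(x^k_\alpha,z^k_\beta) - \rho(x^k_\alpha,z^k_\alpha) > \big[(2A_0^2)^{-1} - \delta\big]\delta^k > c_0\delta^k,
\end{equation*}
where the final inequality is precisely \eqref{def;b0}.

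The main obstacle is the asymmetric sub-case of the separation argument: when $\alpha$ lacks the master label but $\beta$ has it, the non-neighbour guarantee is unavailable and a naive triangle inequality is too lossy, since we only know $\rho(x^k_\beta, z^k_\beta) < \delta^k$ on $\beta$'s side. It is the uniqueness of the parent in the reference partial order, combined with the sharp calibration \eqref{def;b0} built into the smallness hypothesis $96A_0^6\delta \leq 1$, that rescues the estimate; this is exactly the interplay the labeling machinery has been tuned to exploit.
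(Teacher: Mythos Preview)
Your proof is correct and follows essentially the same approach as the paper: the same case split on whether both points carry the master label, the same use of the non-neighbour property in the first case, and the same quasi-triangle estimate pivoting on $x^k_\alpha$ combined with \eqref{def;b0} in the second case; the covering bound is likewise identical.
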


\begin{proof}
Let us fix $k\in \Z$ and $\ell=\ell_k\in \{ 0, \ldots ,L\}$. By the general selection rule, $z_\alpha^k=x_{\gamma}^{k+1}$ and $z_\beta^k=x_{\sigma}^{k+1}$ for some reference points $x_{\gamma}^{k+1}$ and $x_{\sigma}^{k+1}$. First assume that at least one of the reference points $x_{\alpha}^{k}$ and $x_{\beta}^{k}$ has primary label different from the master label $\ell$. We may, without loss of generality, assume that $\lab_1(k,\alpha)\neq \ell$. This implies $\rho(z_{\alpha}^{k},x_{\alpha}^k)<\delta^{k+1}$. Since $(k+1,\sigma)$ is not a child of $(k,\alpha)$, we have $\rho(z_{\beta}^{k},x_{\alpha}^k)\geq (2A_0)^{-1}\delta^{k}$. Thus,
\begin{equation*}
(2A_0)^{-1}\delta^{k}\leq \rho(z_{\beta}^{k},x_{\alpha}^k)\leq A_0\rho(x_{\alpha}^k,z_{\alpha}^{k})+A_0\rho(z_{\alpha}^{k},z_{\beta}^k)\leq A_0\delta^{k+1}+A_0\rho(z_{\alpha}^{k},z_{\beta}^k),
\end{equation*} 
or equivalently, by (\ref{def;b0}),
\begin{equation*}
\rho(z_{\alpha}^{k},z_{\beta}^k)\geq \frac{1}{A_0}\left( \frac{\delta^k}{2A_0}-A_0\delta^{k+1}\right) =\left(\frac{1}{2A_0^2}-\delta \right)\delta^k >c_0\delta^k .
\end{equation*}
Otherwise, both $x_{\alpha}^{k}$ and $x_{\beta}^{k}$ have primary label $\ell$. In this case, the points $x_{\alpha}^k$ and $x_{\beta}^k$ are not neighbours and there is no conflict between their children. The first assertion follows.

Fix $x\in X$. There exists $\alpha$ such that $\rho (x,x_{\alpha}^k)<\delta^k$ where $x_{\alpha}^k$ is a reference point. Point $z_{\alpha}^k$ is a child of $x_{\alpha}^k$ thus, $\rho(x_{\alpha}^k,z_{\alpha}^k)<\delta^k$. It follows that
\[\rho (x,z_{\alpha}^k)\leq A_0\rho (x,x_{\alpha}^k)+A_0\rho (x_{\alpha}^k,z_{\alpha}^k)< A_0\delta^k+A_0\delta^k = 2A_0\delta^k . \]
\end{proof}


\begin{definition}[Specific selection rule]\label{spesific;rule}
Fix $(\ell,m)\in \{0, \ldots ,L\}\times \{1, \ldots ,M\}$. For every index pair $(k,\alpha)$, check whether there exists $(k+1,\beta)\leq (k,\alpha)$ with label pair $(\ell,m)$. If so, decree that $z^k_\alpha:=x_\beta^{k+1}$. Otherwise, pick some $(k+1,\beta)\leq (k,\alpha)$ with $\rho(x_{\beta}^{k+1},x_{\alpha}^k)<\delta^{k+1}$ and decree $z_\alpha^k:=x_\beta^{k+1}$. 

Note that the specific selection rule is more precise than the general selection rule. Indeed, in case a reference point has primary label same as the master label $\ell$, we do not just choose any child but the one with duplex label $(\ell,m)$ (if one exists). Thus, it is a special case of the general selection rule. In particular, the point sets obtained from the reference points by the specific selection rule satisfy the distribution properties of Lemma \ref{dyadic_points}. 

Let $\varphi$ be a bijection $\{0, \ldots ,L\}\times \{1, \ldots ,M\}\to \{1, \ldots ,K\}\subset \N$,  $(\ell,m)\mapsto t$. We identify $t=\varphi(\ell ,m)$ with $(\ell,m)$. 
Each $t$ gives rise to a set $\{^t\!z_\alpha^k : k\in \Z, \alpha\in \mathscr{A}_k\}$ of new dyadic points associated with the duplex label $(\ell,m)=t$. Note that, by repeating the specific selection rule for every ordered pair of labels $(\ell,m)$, Lemma~\ref{dyadic_points} and Theorem~\ref{thm:cubes} complete the proof of the first part of Theorem~\ref{thm:adjacent;systems}. We denote by $\mathscr{D}^t$ the family of dyadic cubes $Q^k_\alpha ={} ^t\! Q^k_\alpha$ corresponding to the point set $\{^t\!z_\alpha^k : k\in \Z, \alpha\in \mathscr{A}_k\}$, $t=1,\ldots, K$.
\end{definition}

The collection of dyadic families $\mathscr{D}^t$, $t=1,\ldots, K$, obtained by repeating the specific selection rule with all the choices of $(\ell ,m)$ further have the following property:

\begin{lemma}\label{prop;cubeandball}
For every ball $B=B(x,r)\subseteq X$ there exist an integer $t$ and a dyadic cube $Q\in \mathscr{D}^t$ such that
\[ B\subseteq Q\quad\text{and}\quad \diam (Q)\leq C\,r, \]
where $C=C(A_0,\delta)$ is a constant independent of $x$ and $t$.
\end{lemma}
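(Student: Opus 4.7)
The plan is to exploit the labelling machinery so that, for any ball $B = B(x, r)$, one of the $K$ adjacent systems already contains a cube at the appropriate scale centred near $x$. First I would choose the scale $k$ to be the largest integer with $\delta^k \geq 24 A_0^5\, r$; by maximality, $\delta^k < 24 A_0^5\, r/\delta$, so $\delta^k$ is comparable to $r$ up to factors depending only on $A_0$ and $\delta$, which will immediately yield the diameter bound. Next, by the covering part of \eqref{ehdot} applied to the reference points (where $C_0 = 1$), I pick an index $\beta$ with $\rho(x, x_\beta^{k+1}) < \delta^{k+1}$.

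Let $(k, \alpha)$ be the unique parent of $(k+1, \beta)$, read off the duplex label $(\ell, m) = \lab_2(k+1, \beta) = (\lab_1(k,\alpha), m(\beta))$, and set $t := \varphi(\ell, m)$. The key observation is that under the specific selection rule of Definition~\ref{spesific;rule} with parameter $(\ell, m)$, the parent $(k, \alpha)$ is necessarily assigned $^t\!z_\alpha^k = x_\beta^{k+1}$: indeed, since $\lab_1(k,\alpha) = \ell$, the rule looks for a child of $(k, \alpha)$ with duplex label $(\ell, m)$, and by the labelling procedure of Subsection~\ref{definition:labels} distinct children of a common parent receive distinct second components, so $(k+1, \beta)$ is the unique such child. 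Thus $\rho(x, {}^t\!z_\alpha^k) < \delta^{k+1}$.

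With this in hand, the conclusion is a routine quasi-triangle computation. For $y \in B(x, r)$,
\begin{equation*}
   \rho(y, {}^t\!z^k_\alpha) \leq A_0 r + A_0 \delta^{k+1},
\end{equation*}
and the hypothesis $96 A_0^6 \delta \leq 1$ together with the scale choice $r \leq \delta^k/(24 A_0^5)$ forces the right-hand side to stay below $c_1 \delta^k = \delta^k/(12 A_0^4)$. So $B \subseteq B({}^t\!z_\alpha^k, c_1 \delta^k) \subseteq Q_\alpha^k$ by \eqref{eq:contain}, and $\diam(Q_\alpha^k) \leq 2 C_1 \delta^k \leq C\,r$ with $C = C(A_0,\delta)$. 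The only substantive step is the middle one: one has to align the primary label of the parent with the $(\ell,m)$-parametrisation so that the specific selection rule indeed promotes our particular reference point $x_\beta^{k+1}$ to the centre ${}^t\!z_\alpha^k$; once this is seen, everything else is a bookkeeping of constants already set up in Subsection~\ref{existence;dyadicpoints}.
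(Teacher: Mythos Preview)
Your proof is correct and follows essentially the same route as the paper: locate a nearby reference point $x_\beta^{k+1}$, read off its duplex label $(\ell,m)$, and observe that the specific selection rule with parameter $(\ell,m)$ promotes precisely this point to the centre ${}^t z_\alpha^k$; your scale normalisation $\delta^k\geq 24A_0^5 r$ differs only cosmetically from the paper's $\delta^{k+2}<r\leq\delta^{k+1}$. One minor slip: in a quasi-metric the diameter of $B(z,R)$ is bounded by $2A_0R$, not $2R$, so the diameter estimate should read $\diam(Q_\alpha^k)\leq 2A_0 C_1\delta^k$, but this is absorbed into the constant $C$ anyway.
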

\begin{remark}
Note that the proof will show that for $r$ with $\delta^{k+2}< r \leq \delta^{k+1}$, we may assume that the containing cube $Q\in \mathscr{D}^t$ is of generation $k$. Further, $\rho(x,z_\alpha^k)<\delta^{k+1}$ where $z_\alpha^k$ denotes the center of $Q$. Also note that this is the second part of Theorem~\ref{thm:adjacent;systems}.
\end{remark}
\begin{proof}
Fix $B(x,r)\subseteq X, r>0$, and pick $k\in \Z$ so that $\delta^{k+2}< r \leq \delta^{k+1}$. There exists a reference point $x_\beta^{k+1}$ with duplex label, say $(\ell,m)$, such that
\[\rho (x,x_\beta^{k+1})<\delta^{k+1} . \]
Let $\alpha$ be the unique index for which $(k+1,\beta)\leq (k,\alpha)$. Then $x_{\beta}^{k+1}={} ^t\!z_{\alpha}^{k}$ which is a new dyadic point of generation $k$ in the system $\mathscr{D}^t$, $t=(\ell,m)$. We will prove that $B(x,r)\subseteq B({} ^t\!z_\alpha^{k},c_1\delta^{k})$ where $c_1=(3A_0^2)^{-1}c_0=(12A_0^4)^{-1}$. Indeed, suppose $y\in B(x,r)$. Then
\[\rho (y,{} ^t\!z_\alpha^{k})\leq A_0\rho(y,x)+A_0\rho(x,{} ^t\!z_\alpha^{k})<A_0r+A_0\delta^{k+1}\leq 2A_0\delta^{k+1} \leq c_1\delta^{k}, \]
since $24A_0^5\delta \leq 1$. Thus, $y\in Q:={} ^tQ^{k}_\alpha\in \mathscr{D}^t$, $t=(\ell,m)$. For the diameter of $Q$ we get
\[\diam (Q)\leq \diam \big(B({} ^t\!z_\alpha^{k},C_1\delta^{k})\big) \leq 2A_0C_1\delta^{k} =\frac{2A_0C_1}{\delta^{2}}\delta^{k+2}\leq Cr, \]
with $C:=2A_0C_1\delta^{-2} = 8A_0^3\delta^{-2}$.
\end{proof}

\begin{definition}[Specific selection rule with a distinguished point]
Given $x_0\in X$, recall from \ref{existence;dyadicpoints} that the set of reference points can be chosen in such a way that for every $k\in \Z$ there exists $\alpha$ such that $x^k_\alpha =x_0$. Fix $(\ell,m)\in \{0, \ldots ,L\}\times \{1, \ldots ,M\}$. For every $k$, begin with $x_0=x^k_\alpha$ and decree that $z^k_\alpha :=x_0$. For every $\beta\neq \alpha$, choose the new points $z^k_\beta$ by the specific selection rule, as defined earlier.
\end{definition}

Note that the specific selection rule with a distinguished point is again a special case of the general selection rule. It is not, however, a special case of the specific selection rule. Hence, we need to verify that the dyadic systems, obtained by repeating the specific selection rule with a distinguished point with all the choices of $(\ell,m)$ also satisfy the assertions of Lemma \ref{prop;cubeandball}:

\begin{lemma}\label{dyadic_points_x0} 
Given a fixed point $x_0\in X$, there exist finitely many point sets $\{ z^k_\alpha \}_{k, \alpha}$ such that each of them satisfy the assertions of Lemma~\ref{dyadic_points} and further have the property that for every $k\in \Z$ there exists $\alpha$ such that $z^k_\alpha =x_0$. In addition, the family of dyadic systems defined by these new dyadic points has the property of Lemma~\ref{prop;cubeandball}.
\end{lemma}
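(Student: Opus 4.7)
The plan is to build the $K=(L+1)M$ point sets by running the specific selection rule with a distinguished point through all label pairs $(\ell,m)$, and then to verify each of the three assertions in turn.

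The distribution assertions of Lemma~\ref{dyadic_points} come for free: as already observed, the specific selection rule with a distinguished point is a special case of the general selection rule, so every resulting set $\{{}^t\!z^k_\alpha\}$ satisfies $\rho({}^t\!z^k_\alpha,{}^t\!z^k_\beta)\geq c_0\delta^k$ for $\alpha\neq\beta$ and $\min_\alpha\rho(x,{}^t\!z^k_\alpha)<C_0\delta^k$. The property that $x_0$ appears as a center at every generation is equally immediate: writing $\alpha_k^0$ for the reference index with $x^k_{\alpha_k^0}=x_0$, which is available by the freedom of choice recalled in~\ref{existence;dyadicpoints}, the construction forces ${}^t\!z^k_{\alpha_k^0}=x_0$ for every~$t$.

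The remaining task is the ball-cube containment, which I would adapt from the proof of Lemma~\ref{prop;cubeandball}. Given $B(x,r)$ with $\delta^{k+2}<r\leq\delta^{k+1}$, pick a reference point $x^{k+1}_\beta$ with $\rho(x,x^{k+1}_\beta)<\delta^{k+1}$ and let $(k,\alpha)$ be its unique parent in the reference-point partial order. If $x^k_\alpha\neq x_0$, the specific selection rule applies unchanged: for $t$ equal to the duplex label of $\beta$ we get ${}^t\!z^k_\alpha=x^{k+1}_\beta$, and the original computation from Lemma~\ref{prop;cubeandball} yields $B(x,r)\subseteq{}^t\!Q^k_\alpha$ with $\diam({}^t\!Q^k_\alpha)\leq(8A_0^3/\delta^2)r$.

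The main obstacle is the exceptional case $x^k_\alpha=x_0$: here the distinguished rule forces ${}^t\!z^k_\alpha=x_0$ in every system, so $x^{k+1}_\beta$ is no longer produced as a generation-$k$ center, and when $\rho(x,x_0)$ is comparable to $\delta^k$ the generation-$k$ cube around $x_0$ is too small to contain $B(x,r)$. The cure I propose is to pass one generation up: since ${}^t\!z^{k-1}_{\alpha_{k-1}^0}=x_0$ in every $\mathscr{D}^t$, the cube ${}^t\!Q^{k-1}_{\alpha_{k-1}^0}$ contains $B(x_0,c_1\delta^{k-1})$. Combining the reference-point parent-child bound $\rho(x^{k+1}_\beta,x_0)=\rho(x^{k+1}_\beta,x^k_\alpha)<\delta^k$ with $\rho(x,x^{k+1}_\beta)<\delta^{k+1}$ via the triangle inequality gives $\rho(x,x_0)<A_0\delta^{k+1}+A_0\delta^k$, so for every $y\in B(x,r)$ one obtains $\rho(y,x_0)<A_0\delta^{k+1}+A_0^2\delta^{k+1}+A_0^2\delta^k$; the standing assumption $96A_0^6\delta\leq 1$ then makes this $\leq c_1\delta^{k-1}=\delta^{k-1}/(12A_0^4)$, whence $B(x,r)\subseteq{}^t\!Q^{k-1}_{\alpha_{k-1}^0}$, a cube of diameter at most $(8A_0^3/\delta^3)r$. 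The only genuine difficulty is spotting this edge case and realizing that a one-generation shift suffices; after that, the verification is routine and the cost is merely an extra factor $1/\delta$ in the universal constant of~\eqref{property:ball;included}.
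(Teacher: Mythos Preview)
Your proof is correct and follows essentially the same approach as the paper: both dispose of the distribution properties by invoking the general selection rule, then split the ball-cube containment into the cases $x^k_\alpha\neq x_0$ (handled exactly as in Lemma~\ref{prop;cubeandball}) and $x^k_\alpha=x_0$ (handled by passing to the generation-$(k-1)$ cube centred at $x_0$). The only cosmetic difference is the order in which you iterate the quasi-triangle inequality, yielding $A_0\delta^{k+1}+A_0^2\delta^{k+1}+A_0^2\delta^k$ where the paper obtains $A_0^2 r+A_0^2\delta^{k+1}+A_0\delta^k$; both are comfortably below $c_1\delta^{k-1}$ under the standing assumption on~$\delta$.
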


\begin{remark}
Note that the proof will show that for $r$ with $\delta^{k+2}< r \leq \delta^{k+1}$, we may assume that the containing cube $Q\in \mathscr{D}^t$ is of generation $k-1$. Further, $\rho(x,z_\alpha^k)<2A_0\delta^{k}$ where $z_\alpha^k$ denotes the center point of $Q$. Also note that Lemma \ref{dyadic_points_x0} completes the proof of Proposition~\ref{prop:adjacent;systems}.
\end{remark} 

\begin{proof}
The assertions of Lemma~\ref{dyadic_points} are clear, since we are still in the regime of the general selection rule. We consider the assertion of Lemma~\ref{prop;cubeandball}. Fix a ball $B=B(x,r)$ in $X$ with $\delta^{k+2}<r\leq \delta^{k+1}$. There exists a reference point $x^{k+1}_\beta$ such that $\rho(x,x^{k+1}_\beta)<\delta^{k+1}$.

Assume first that $x^{k}_\alpha = x_0$ for the unique $(k,\alpha)\geq (k+1,\beta)$. Then also $x^{k-1}_\gamma=x_0$ for the unique $(k-1,\gamma)\geq (k,\alpha)$ implying that $^t\!z^{k-1}_\gamma=x_0$ by the specific selection rule with a distinguished point with every $t=(\ell,m)$. Take $y\in B(x,r)$. Then
\begin{equation*}
\begin{split}
  \rho(y,{} ^t\!z^{k-1}_\gamma) &=\rho(y,x_0)\leq A_0^2\rho(y,x)+A_0^2\rho(x,x^{k+1}_\beta)+A_0\rho(x^{k+1}_\beta,x_0)<A_0^2r+A_0^2\delta^{k+1}+A_0\delta^{k}\\
    &\leq \big(2A_0^2\delta^2+A_0\delta)\delta^{k-1}
      \leq\left(2A_0^2\frac{1}{24^2A_0^{10}}+A_0\frac{1}{24 A_0^5}\right)\delta^{k-1}<\frac{1}{12A_0^4}\delta^{k-1}=c_1\delta^{k-1}
\end{split}
\end{equation*}
since $24A_0^5\delta \leq 1$ and $c_1=(3A_0^2)^{-1}c_0=(12A_0^4)^{-1}$. Thus, $y\in Q:={} ^tQ^{k-1}_\gamma$ for any $t$. For the diameter of $Q$ we have
\[\diam (Q)\leq \diam \big(B({} ^t\!z_\gamma^{k-1},C_1\delta^{k-1})\big) \leq 2A_0C_1\delta^{k-1} =\frac{2A_0C_1}{\delta^{3}}\delta^{k+2}\leq Cr, \]
with $C:=2A_0C_1\delta^{-3} = 8A_0^3\delta^{-3}$.

If $x^{k}_\alpha \neq x_0$ for $(k,\alpha)\geq (k+1,\beta)$, then the new dyadic point ${} ^t\!z^{k}_{\alpha}$ is chosen among the $x^{k+1}_{\sigma}$ with $(k+1,\sigma)\leq(k,\alpha)$ exactly as in the specific selection rule (without a distinguished point). Thus, the reference point $x^{k+1}_\beta={}^t\!z^k_\alpha$ for $t=\lab_{2}(k+1,\beta)$, and the proof is completed by same argument as in Lemma~\ref{prop;cubeandball}.
\end{proof}

\section{Random dyadic systems}
In this section we will prove the following theorem, originally from \cite{HM09}. The present contribution consists of a detailed and streamlined construction of the underlying probability space $\Omega$, the details of which already turned out helpful in an application to singular integrals in \cite{Henri:10}.

\begin{theorem}\label{thm:section5}
Given a set of reference points $\{x^k_\alpha \},k\in \Z, \alpha\in\mathscr{A}_k$, suppose that constant $\delta\in (0,1)$ satisfies $96A_0^6\delta\leq 1$. Then there exists a probability space $(\Omega ,\prob)$ such that every $\omega\in \Omega$ defines a dyadic system $\mathscr{D}(\omega)=\{Q^k_\alpha (\omega)\}_{k, \alpha}$, related to new dyadic points $\{z^k_\alpha (\omega)\}_{k,\alpha}$, with the properties \eqref{eq:open-closed}--\eqref{eq:monotone} of Theorem~\ref{thm:cubes}. Further, the probability space $(\Omega,\prob)$ has the following properties:
{\setlength\arraycolsep{2pt}
\begin{eqnarray}\label{ProbSpace:one}
\quad \Omega &=&\prod_{k\in \Z}\Omega_k; \quad \omega =(\omega_k)_{k\in \Z}\in \Omega \text{ with coordinates $\omega_k\in \Omega_k$ which are independent};\\
\quad z^k_\alpha(\omega) &=& z^k_\alpha (\omega_k)\label{ProbSpace:three};
\end{eqnarray}}
\vspace{-0.3cm}
\begin{equation}\label{ProbSpace:two}
\hspace*{-3cm}\text{if $(k+1,\beta)\leq (k,\alpha)$, then } \prob(\{\omega\in \Omega\colon z^k_\alpha(\omega)=x^{k+1}_\beta \})\geq \tau_0>0.
\end{equation}
\end{theorem}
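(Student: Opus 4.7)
The plan is to leverage the deterministic labeling scheme already developed in Section~\ref{sec:adjacent} and randomize only the choice of master label at each scale. Recall that the procedure of Subsection~\ref{definition:labels} equips each reference point $x^k_\alpha$ with a primary label $\lab_1(k,\alpha)\in\{0,\ldots,L\}$ and each $x^{k+1}_\beta$ with a duplex label $\lab_2(k+1,\beta)=(\ell,m)\in\{0,\ldots,L\}\times\{1,\ldots,M\}$, in such a way that the first coordinate coincides with $\lab_1$ of the unique reference parent of $\beta$, and distinct children of a common parent receive distinct values of $m$. Crucially, these labels are determined entirely by the (fixed) reference points and carry no randomness.

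For each $k\in\Z$, I would take $\Omega_k:=\{0,\ldots,L\}\times\{1,\ldots,M\}$ equipped with the uniform probability measure $\prob_k$, and set $\Omega:=\prod_{k\in\Z}\Omega_k$ with the product measure $\prob:=\bigotimes_{k\in\Z}\prob_k$; this immediately gives property~\eqref{ProbSpace:one}. Given $\omega=(\omega_k)\in\Omega$, for each $k$ I define the new dyadic points $\{z^k_\alpha(\omega_k)\}_\alpha$ by applying the specific selection rule (Definition~\ref{spesific;rule}) with the label pair $\omega_k=(\ell,m)$. Since the selection at generation $k$ uses only the (deterministic) labels of the reference points together with the single coordinate $\omega_k$, property~\eqref{ProbSpace:three} is immediate. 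Moreover, because the specific selection rule is a special case of the general selection rule, Lemma~\ref{dyadic_points} guarantees that $\{z^k_\alpha(\omega_k)\}_\alpha$ satisfies condition~\eqref{ehdot} with $c_0=(4A_0^2)^{-1}$ and $C_0=2A_0$, and the hypothesis $96A_0^6\delta\leq 1$ is exactly $12A_0^3 C_0\delta\leq c_0$; hence Theorem~\ref{thm:cubes} applies and yields a dyadic system $\mathscr{D}(\omega)$ with the full list of properties \eqref{eq:open-closed}--\eqref{eq:monotone}.

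It then remains only to verify the probability bound~\eqref{ProbSpace:two}. Fix $(k+1,\beta)\leq(k,\alpha)$ in the reference hierarchy and set $(\ell^*,m^*):=\lab_2(k+1,\beta)$; by the construction of the labels $\ell^*=\lab_1(k,\alpha)$, and $\beta$ is the only child of $\alpha$ bearing this duplex label. Hence, whenever $\omega_k=(\ell^*,m^*)$, the specific selection rule forces $z^k_\alpha(\omega_k)=x^{k+1}_\beta$, so
\[
\prob(\{\omega\in\Omega\colon z^k_\alpha(\omega)=x^{k+1}_\beta\})\geq\prob_k(\{(\ell^*,m^*)\})=\frac{1}{(L+1)M}=:\tau_0>0,
\]
as required. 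The main conceptual hurdle is not in this section at all but already accomplished in Section~\ref{sec:adjacent}: the entire combinatorial work lies in devising duplex labels that simultaneously avoid conflicts and make every valid child of $\alpha$ reachable by some choice of $\omega_k$. The present proof is essentially a clean repackaging of that construction into a product probability space, with the master label at scale $k$ drawn uniformly and independently of other scales rather than chosen by hand.
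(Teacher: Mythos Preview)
Your proof is correct, but it follows a genuinely different route from the paper's Section~5. The paper constructs a much larger probability space: each $\Omega_k$ consists of a master label $\ell_k$ together with, for every $\alpha\in\mathscr{A}_k$, an independent random choice of which admissible child to select (any child if $\lab_1(k,\alpha)=\ell_k$, a nearby child otherwise). In other words, the paper randomizes over \emph{all} degrees of freedom permitted by the general selection rule, whereas you randomize only over the $K=(L+1)M$ specific selection rules, keeping the ``otherwise'' choice deterministic. Both constructions yield the same lower bound $\tau_0=1/((L+1)M)$.

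The paper explicitly anticipates your simplification: the Remark immediately after Theorem~\ref{thm:section5} says that ``the properties \eqref{ProbSpace:one}--\eqref{ProbSpace:two} can, however, be obtained with much less randomness,'' and Section~\ref{sec:adjRandom} carries this out with $\Omega_k=\{1,\ldots,K\}$ --- essentially your space, up to the bijection $\varphi$. What your minimal construction buys is simplicity; what the paper's larger space buys is additional independence between the choices at different $\alpha$ on the same level $k$, which can be useful in applications needing finer probabilistic control (though not for the statements of Theorem~\ref{thm:section5} or Theorem~\ref{probabilistic:statements} themselves).
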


\begin{remark}
In this chapter we will construct a probability space $(\Omega,\prob)$ by randomizing the choice of new dyadic points from the reference points with respect to all the possible degrees of freedom. The properties \eqref{ProbSpace:one}--\eqref{ProbSpace:two} can, however, be obtained with much less randomness. We will return to this in Section~\ref{sec:adjRandom}. 
\end{remark}

We will first state the following theorem which presents the general properties of all the random dyadic systems with the properties \eqref{ProbSpace:one}--\eqref{ProbSpace:two}. For the slightly different random systems originally constructed in \cite{HM09}, the property~\eqref{property:prob;small} below was already established; its consequences stated as \eqref{property:prob;zero} and \eqref{property:measure;zero} were observed and applied in \cite{HLYY}.

\begin{theorem}\label{probabilistic:statements}
Given a set of reference points $\{x^k_\alpha \},k\in \Z, \alpha\in\mathscr{A}_k$, suppose that constant $\delta\in (0,1)$ satisfies $144A_0^8\delta\leq 1$. Suppose $(\Omega,\prob)$ is any probability space such that every $\omega\in \Omega$ defines a dyadic system $\mathscr{D}(\omega)=\{Q^k_\alpha (\omega)\}_{k, \alpha}$, related to new dyadic points $\{z^k_\alpha (\omega)\}_{k,\alpha}$, with the properties \eqref{eq:open-closed}--\eqref{eq:monotone} of Theorem~\ref{thm:cubes}. Suppose further that the space $(\Omega ,\prob)$ has the properties \eqref{ProbSpace:one}--\eqref{ProbSpace:two} of Theorem~\ref{thm:section5}. Then the following probabilistic statements hold:

For every $x\in X, \tau>0$ and $k\in \Z$, 
\begin{equation}\label{property:prob;small}
\prob \big( \{\omega\in \Omega\colon x\in \bigcup_{\alpha}\partial_{\tau\delta^k}Q^k_\alpha (\omega) \} \big) \leq C_2\tau^\eta 
\text{ for some $C_2,\eta >0$},
\end{equation}
where
\[\partial_\varepsilon Q:=\{x\in \bar{Q}\colon \rho(x,\tilde{Q}^c)\leq \varepsilon \},\; \varepsilon>0;  \]

For every $x\in X$, 
\begin{equation}\label{property:prob;zero}
\prob \big( \{\omega\in \Omega\colon x\in \bigcup_{k,\alpha}\partial Q^k_\alpha (\omega) \} \big) =0;
\end{equation}

Given a positive $\sigma$-finite measure $\mu$ on $X$,
\begin{equation}\label{property:measure;zero}
\mu \big( \bigcup_{k,\alpha}\partial Q^k_\alpha (\omega) \big) =0 \text{ for a.e. $\omega\in \Omega$}.
\end{equation}
\end{theorem}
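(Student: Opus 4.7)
The substantive content of the theorem is the boundary-smallness estimate \eqref{property:prob;small}; the statements \eqref{property:prob;zero} and \eqref{property:measure;zero} then fall out of it by soft limit and Fubini arguments. For \eqref{property:prob;small} the strategy is a scale-by-scale iteration exploiting the product structure \eqref{ProbSpace:one} and the pointwise lower bound \eqref{ProbSpace:two}: at each finer generation, the random selection of the new centre of the cube containing $x$ pulls $x$ quantitatively into the interior with probability at least $\tau_0$, so the probability of remaining $\tau\delta^k$-close to the scale-$k$ boundary decays geometrically in the number of generations required.

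Quantitatively, fix $x\in X$, $k\in\Z$ and $\tau\in(0,1)$, and choose $j\in\No$ with $\delta^{j+1}<\tau\leq\delta^j$, so that $\tau\delta^k$ is comparable to $\delta^{k+j}$. Using the decomposition \eqref{eq:unionOfClosed} and the containment \eqref{eq:contain-prelim}, the event $x\in\bigcup_\alpha\partial_{\tau\delta^k}Q^k_\alpha(\omega)$ forces the existence of two descendant cubes at scale $k+j$, one containing $x$ and one outside the scale-$k$ ancestor of $x$, whose centres lie at quasi-distance of order $\delta^{k+j}$. Running up both reference ancestor chains, this yields, at each intermediate scale $k+i$, $0\leq i<j$, a specific admissible reference child $x^{k+i+1}_{\beta_i}$ of the cube containing $x$ whose selection as $z^{k+i}_{\gamma_i}(\omega_{k+i})$ would contradict the boundary condition. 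By \eqref{ProbSpace:two} each of these avoidance events has conditional probability $\leq 1-\tau_0$, and by \eqref{ProbSpace:one} their joint probability is at most $(1-\tau_0)^j\leq C\tau^\eta$ with $\eta:=\log(1/(1-\tau_0))/\log(1/\delta)$, which is the claimed bound; the calibration $144A_0^8\delta\leq 1$ keeps the iterated quasi-triangle losses uniformly controlled across scales. The main obstacle will be precisely this geometric reduction: translating the single condition $\rho(x,(\tilde Q^k_\alpha)^c)\leq\tau\delta^k$ into $j$ scale-separated avoidance events, each depending on only one coordinate $\omega_{k+i}$. This is handled by an iterated quasi-triangle inequality along both ancestor chains, in the spirit of Lemma~\ref{lem:descendant}.

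Finally, \eqref{property:prob;zero} is immediate from \eqref{property:prob;small}: $\tilde Q^k_\alpha$ is open by Lemma~\ref{closed:union}, so $\partial Q^k_\alpha=\bigcap_{\tau>0}\partial_{\tau\delta^k}Q^k_\alpha$, and letting $\tau\to 0^+$ together with a countable union over $k\in\Z$ yields $\prob(x\in\bigcup_{k,\alpha}\partial Q^k_\alpha(\omega))=0$. Statement \eqref{property:measure;zero} is then a Fubini--Tonelli computation on the $\sigma$-finite product $(X,\mu)\times(\Omega,\prob)$: the jointly measurable set $\{(x,\omega)\colon x\in\bigcup_{k,\alpha}\partial Q^k_\alpha(\omega)\}$ has every $x$-section of $\prob$-measure zero by \eqref{property:prob;zero}, so $\prob$-a.e. $\omega$-section has $\mu$-measure zero, which is exactly \eqref{property:measure;zero}.
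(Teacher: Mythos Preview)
Your plan follows the paper's approach: the heart of \eqref{property:prob;small} is a deterministic geometric lemma (the paper's Lemma~\ref{lem:technical}) stating that if $x\in\partial_{\tau\delta^k}Q^k_\alpha(\omega)$ and $x\in\bar Q^{k+N}_\sigma(\omega)$, then every jump $\rho(z^j_{\sigma_j},z^{j+1}_{\sigma_{j+1}})$ along the new-system chain from $(k+N,\sigma)$ up to level $k$ is at least $\varepsilon_1\delta^j$; one then iterates a conditional bound via \eqref{ProbSpace:two} to get $(1-\tau_0)^N$. Your handling of \eqref{property:prob;zero} and \eqref{property:measure;zero} matches the paper.

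Two points where your sketch is imprecise relative to the paper. First, the ``two descendant cubes / both reference ancestor chains'' setup is not how the paper proceeds and is not obviously sound: the paper works with the \emph{single} new-system chain of ancestors of the cube $\bar Q^{k+N}_\sigma(\omega)$ containing $x$ (after first reducing to boundedly many choices of $\alpha$ and $\sigma$), and Lemma~\ref{lem:technical} covers both the case where this chain stays under $(k,\alpha)$ and the case where it doesn't. Second, your avoidance events do \emph{not} depend on the single coordinate $\omega_{k+i}$: the chain index $\sigma_{k+i+1}$ is a function of $\omega_{k+i+1},\ldots,\omega_{k+N}$, so the large-jump event $A_{k+i}$ lies in $\mathscr F_{k+i}:=\sigma(\omega_\ell:\ell\geq k+i)$, and what one actually bounds is $\Exp[\chi_{A_{k+i}}\mid\mathscr F_{k+i+1}]\leq 1-\tau_0$, iterating via the tower property rather than independence. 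You do say ``conditional probability'', so you may already have this in mind, but the phrasing ``each depending on only one coordinate'' is misleading and, taken literally, would be a gap.
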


\subsection{The probability space}\label{prob;space}
Keeping with the fixed set $\{x^k_\alpha\}$, $k\in \Z$, $\alpha\in \mathscr{A}_k$, of reference points, we randomize the construction of new dyadic points from them.
This amounts to formalizing the underlying space of all possible choices of new dyadic points allowed by the general selection rule, and then defining a natural probability measure on this space.
The underlying probability space $\Omega$ will be formed by countable products and unions of finite probability spaces as follows: 
\begin{align*}
\Omega := \prod_{k\in \Z}\Omega_k;  \qquad
\Omega_k: =\bigcup_{\ell_k\in \{0,\ldots ,L \} }
  \Big(\{\ell_k\}\quad &\times\prod_{\alpha\in \mathscr{A}_k\atop \lab_1(k,\alpha)=\ell_k} \{\gamma\colon (k+1,\gamma)\leq (k,\alpha)\} \\
 &\times \prod_{\alpha\in \mathscr{A}_k\atop \lab_1(k,\alpha)\neq\ell_k}
\{\gamma\colon \rho(x^{k+1}_\gamma,x^k_\alpha)<\delta^{k+1}\}\Big).
\end{align*}

For the finite sets $\{\gamma\colon (k+1,\gamma)\leq (k,\alpha)\}$ and $\{\gamma\colon \rho(x^{k+1}_\gamma,x^k_\alpha)<\delta^{k+1}\}$, we use the $\sigma$-algebras consisting of all sub-sets. The $\sigma$-algebra $\mathscr{G}_k$ of the set $\Omega_k$ is the $\sigma$-algebra generated by these sets. We will further consider
\begin{equation*}
 \mathscr{H}_k:= \Big\{ \prod_{j<k}\Omega_j\times G_k \times \prod_{j>k}\Omega_j\colon G_k\in \mathscr{G}_k\Big\} ,\qquad k\in\Z.
\end{equation*}
Then $\sigma$-algebra $\mathscr{H}$ of $\Omega$ is the one generated by the $\sigma$-albegras $\mathscr{H}_k$.

The points $\omega\in\Omega$ admit the natural coordinate representation $\omega=(\omega_k)_{k\in\Z}$, where moreover
\begin{equation*}
  \omega_k=(\ell_k;\omega_{k,\alpha}:\alpha\in\mathscr{A}_k)\in\Omega_k,
\end{equation*}
where $\ell_k\in\{0,\ldots,L\}$ and each $\omega_{k,\alpha}\in\mathscr{A}_{k+1}$ satisfies $(k+1,\omega_{k,\alpha})\leq(k,\alpha)$, as well as $\rho(x^{k+1}_{\omega_{k,\alpha}},x^k_\alpha)<\delta^{k+1}$ if $\lab_1(k,\alpha)\neq\ell_k$.

We define a probability $\prob$ on $\Omega$ by requiring the coordinates $\omega_k$ to be independent and distributed as follows: First,
\begin{equation*}
  \prob(\ell_k=\ell)=\frac{1}{L+1}\qquad \forall\, \ell=0,\ldots,L.
\end{equation*}
Second, given the master label $\ell_k$, the subcoordinates $\omega_{k,\alpha}$, $\alpha\in\mathscr{A}_k$, are again independent, with distribution
\begin{align*}
  \prob(\omega_{k,\alpha} &=\beta |\ell_k=\ell) \\
  &=\begin{cases}
    [\#\{ \gamma\colon (k+1,\gamma)\leq (k,\alpha)\}]^{-1}\quad \forall (k+1,\beta)\leq (k,\alpha), &\text{if } \lab_1(k,\alpha)=\ell_k, \\
    [\#\{ \gamma\colon \rho(x^{k+1}_\gamma ,x^k_\alpha)<\delta^{k+1}\}]^{-1}\quad \!\!\forall(k+1,\beta):\rho(x^{k+1}_\beta ,x^k_\alpha)<\delta^{k+1}, &\text{if } \lab_1(k,\alpha)\neq \ell_k.
   \end{cases}
\end{align*}

Note that there is an obvious one-to-one correspondence between the coordinates $\omega_k\in\Omega_k$ and the admissible choices of the relation $\searrow$ between index pairs on levels $k$ and $k+1$, subject to the general selection rule. This relation in turn uniquely determines the new dyadic points $z^k_{\alpha}=z^k_{\alpha}(\omega_k)$ for the given level $k\in\Z$, and thus the choice of $\omega=(\omega_k)_{k\in\Z}$ uniquely determines the new dyadic points $z^k_{\alpha}=z^k_{\alpha}(\omega)$ on all levels $k\in\Z$. By a random choice of the new dyadic points, we understand the new dyadic points $z^k_{\alpha}(\omega)$, where $\omega\in\Omega$ is distributed according to the probability $\prob$.

Once the points $z^k_{\alpha}(\omega)$ are chosen, they uniquely determine the relation $\leq_{\omega}$ between the index pairs $(k,\alpha)$ (not to be confused with the original relation $\leq$, which is in general not the same); recall that it is possible to make the choice of $\leq_{\omega}$ in such a way that it only depends on the dyadic points without any arbitrariness. Then the points $z^k_{\alpha}(\omega)$ and the relation $\leq_{\omega}$ together determine the new dyadic cubes $Q^k_{\alpha}(\omega)$ as a function of $\omega\in\Omega$, and their random choice corresponds to the random choice of $\omega$ according to the law $\prob$.

It is evident, by Lemma~\ref{dyadic_points}, that for every $\omega\in\Omega$, the dyadic system $\mathscr{D}(\omega)$ satisfies the properties \eqref{eq:open-closed}--\eqref{eq:monotone} of Theorem~\ref{thm:cubes}.

Note that by construction, for every $(k+1,\beta)\leq (k,\alpha)$,
\[\prob (\{\omega\in \Omega\colon z^k_\alpha(\omega)=x^{k+1}_\beta\})\geq [(L+1)\#\{\gamma\colon (k+1,\gamma)\leq (k,\alpha)\}]^{-1}\geq [(L+1)M]^{-1}=:\tau_0>0.\]
This completes the proof of Theorem~\ref{thm:section5}.

\subsection{A technical lemma}

Before turning to a more thorough investigation of the random dyadic cubes as just defined, we provide a technical lemma, which has nothing to do with the randomness, but is a general property of all dyadic systems. However, we will only make use of this lemma in the randomized context, which is the reason of including it in this section. Roughly speaking, the lemma states that in order to reach the boundary of a cube from its centre, along a direct line of ancestry of dyadic points, one needs to make jumps of non-trivial size at every step. This result goes back to Christ \cite{Christ90}, and appeared as part of the proof of his Lemma~17. Christ's lemma concerned the smallness of the boundary region of the dyadic cubes with respect to an underlying doubling measure; the technical intermediate result is valid even without the presence of a measure, and we will apply it to get analogous smallness results for the boundary with respect to the probability $\prob$ defined above.

\begin{lemma}\label{lem:technical}
Suppose that constants $0<c_0\leq C_0 <\infty$ and $\delta \in (0,1)$ satisfy $18A_0^5C_0\delta \leq c_0$. Let $\{z^k_{\alpha}\}_{k,\alpha}$ be a set of points as in Theorem~\ref{thm:cubes}. Given $N\in\Z_+$ and $\tau >0$, suppose that $12A_0^4\tau\leq c_0\delta^N$. Let $x\in\bar{Q}^k_{\alpha}$ with $\rho(x,(\tilde{Q}^k_{\alpha})^c)<\tau\delta^k$. For all chains
\begin{equation*}
  (k+N,\sigma)=(k+N,\sigma_{k+N})
  \leq\ldots\leq (k+1,\sigma_{k+1})\leq (k,\sigma_k)
\end{equation*}
such that $x\in\bar{Q}^{k+N}_{\sigma}$, there holds $\rho(z^j_{\sigma_j},z^i_{\sigma_i})\geq\eps_1\delta^j$, $\eps_1:=(12A_0^4)^{-1}c_0$, for all $k\leq j<i\leq k+N$.
\end{lemma}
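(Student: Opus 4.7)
The plan is to argue by contradiction: assume some $k \leq j < i \leq k+N$ in the chain satisfies $\rho(z^j_{\sigma_j}, z^i_{\sigma_i}) < \varepsilon_1 \delta^j$, and derive a lower bound on $\tau$ that clashes with the hypothesis $12 A_0^4 \tau \leq c_0 \delta^N$. Geometrically, the idea is that if two successive centres in the chain are too close, then $x$ --- which by nestedness of the chain sits near $z^i_{\sigma_i}$ --- must sit deep inside $\tilde{Q}^j_{\sigma_j}$, contradicting its closeness to $(\tilde{Q}^k_{\alpha})^c$.

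First I would localize $x$: from $x \in \bar{Q}^{k+N}_{\sigma}$ the chain together with \eqref{eq:unionOfClosed} gives $x \in \bar{Q}^i_{\sigma_i} \subseteq B(z^i_{\sigma_i}, C_1 \delta^i)$ by \eqref{eq:contain-prelim}(ii). Combining with the assumed centre-closeness via the quasi-triangle inequality and $i \geq j+1$,
\begin{equation*}
\rho(x, z^j_{\sigma_j}) \leq A_0 \rho(x, z^i_{\sigma_i}) + A_0 \rho(z^i_{\sigma_i}, z^j_{\sigma_j}) \leq A_0 C_1 \delta^{j+1} + A_0 \varepsilon_1 \delta^j.
\end{equation*}
Unpacking $C_1 = 2A_0 C_0$ and using $18 A_0^5 C_0 \delta \leq c_0$ bounds the first term by $\tfrac{1}{9} c_0 \delta^j/A_0^3$, while $\varepsilon_1 = c_0/(12 A_0^4)$ gives $\tfrac{1}{12} c_0 \delta^j/A_0^3$ for the second; the sum is $\tfrac{7}{36} c_0 \delta^j/A_0^3$, strictly less than $c_1 \delta^j/A_0 = \tfrac{1}{3} c_0 \delta^j/A_0^3$.

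For the contradictory upper bound I would use \eqref{eq:contain-prelim}(i): any $y \in (\tilde{Q}^j_{\sigma_j})^c$ satisfies $\rho(y, z^j_{\sigma_j}) \geq c_1 \delta^j$. Such a $y$ close to $x$ can be found in both cases regarding the base of the chain. If $\sigma_k = \alpha$, then $\tilde{Q}^j_{\sigma_j} \subseteq \tilde{Q}^k_{\alpha}$ by \eqref{eq:nested-prelim}, so the hypothesis $\rho(x, (\tilde{Q}^k_{\alpha})^c) < \tau \delta^k$ supplies $y \in (\tilde{Q}^k_{\alpha})^c \subseteq (\tilde{Q}^j_{\sigma_j})^c$ within $\tau \delta^k$ of $x$. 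If $\sigma_k \neq \alpha$, then $x \in \bar{Q}^k_{\alpha}$ combined with \eqref{eq:nested-prelim} forces $x \in (\tilde{Q}^k_{\sigma_k})^c \subseteq (\tilde{Q}^j_{\sigma_j})^c$, so $y := x$ works with $\rho(x,y) = 0$. A final quasi-triangle estimate yields
\begin{equation*}
c_1 \delta^j \leq A_0 \rho(y, x) + A_0 \rho(x, z^j_{\sigma_j}) < A_0 \tau \delta^k + \tfrac{7}{36} c_0 \delta^j/A_0^2,
\end{equation*}
which rearranges to $\tau > \tfrac{5}{36} c_0 \delta^{j-k}/A_0^3$. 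Since $j \leq k+N-1$, this is incompatible with $\tau \leq c_0 \delta^N/(12 A_0^4)$: the comparison reduces to $\delta \leq \tfrac{5}{3} A_0$, trivially true.

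The only real obstacle is bookkeeping: the specific value $\varepsilon_1 = c_0/(12 A_0^4)$ and the margin built into $18 A_0^5 C_0 \delta \leq c_0$ are calibrated so that the two applications of the quasi-triangle inequality (each losing a factor $A_0$) still leave positive slack of $\tfrac{5}{36} c_0/A_0^3$ in the final estimate, which is just enough to clash with $\tau \leq c_0 \delta^N/(12 A_0^4)$ at the worst case $j = k+N-1$. The conceptual content of the lemma --- that successive ancestry jumps of $x$ must move by at least $\varepsilon_1 \delta^j$ --- is entirely carried by \eqref{eq:contain-prelim} and \eqref{eq:nested-prelim}.
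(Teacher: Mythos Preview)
Your proof is correct and follows essentially the same approach as the paper's: argue by contradiction, split into the cases $\sigma_k=\alpha$ and $\sigma_k\neq\alpha$, and use \eqref{eq:contain-prelim} together with \eqref{eq:nested-prelim} to force $c_1\delta^j$ to be smaller than itself. The only difference is cosmetic: the paper does the quasi-triangle estimate in a single three-term chain $c_1\delta^j\leq\rho(z^j,(\tilde{Q}^k_\alpha)^c)\leq A_0\tau\delta^k+A_0^2C_1\delta^i+A_0^2\varepsilon_1\delta^j$ and verifies this is $\leq c_1\delta^j$ directly, whereas you split it into two steps (first bounding $\rho(x,z^j_{\sigma_j})$, then introducing the point $y$), leading to slightly different numerics ($\tfrac{7}{36}$ versus the paper's $\tfrac14+\tfrac13+\tfrac14$) but the same conclusion.
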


\begin{proof}
Let $(k,\alpha)$ be fixed and consider $x\in \bar{Q}_\alpha^k$ with $\rho(x,(\tilde{Q}^k_{\alpha})^c)<\tau\delta^k$ for some $\tau >0$. 
Let $(j,\sigma_j)$ be the intermediate pairs as in the assertion, and abbreviate $z^j:=z^j_{\sigma_j}$ for $k\leq j\leq k+N$. Suppose for contradiction that $\rho(z^j,z^i)<\eps_1\delta^j$ for some $k\leq j<i\leq k+N$. There are two possibilities: $\sigma_k=\alpha$ or not.

First assume $\sigma_k=\alpha$ (i.e. the chain travels in $Q_\alpha^k$). Then, as $x\in\bar{Q}^{k+N}_{\sigma}\subset\bar{Q}^{i}_{\sigma_i}$, also $x\in B(z^i,C_1\delta^i)$ for $(i,\sigma_i)\geq (k+N,\sigma)$. We also have $B(z^j,c_1\delta^j)\subseteq\tilde{Q}^j_{\sigma_j}\subseteq\tilde{Q}_\alpha^k$, and so it follows that
\begin{equation*}
\begin{split}
  c_1\delta^j\leq\rho(z^j,(\tilde{Q}_\alpha^k)^c)
  &\leq A_0\rho(x,(\tilde{Q}_\alpha^k)^c)+A_0^2\rho(x,z^i)+A_0^2\rho(z^i,z^j) \\
  &< A_0\tau\delta^k+A_0^2 C_1\delta^i+A_0^2\eps_1\delta^j
  \leq \frac{1}{4}c_1\delta^{N+k}+\frac{1}{3}c_1\delta^{i-1}+\frac{1}{4}c_1\delta^{j}
  \leq c_1\delta^j,
\end{split}
\end{equation*}
since $c_1:=(3A_0^2)^{-1}c_0$, $C_1:=2A_0C_0$, $4A_0^2\tau\leq c_1\delta^N$, $3A_0^2 C_1\delta\leq c_1$ and $4A_0^2\eps_1\leq c_1$, and this is a contradiction.

If $\sigma_k\neq\alpha$ (and the chain travels outside $Q_\alpha^k$), we have $x\in\bar{Q}^{k+N}_{\sigma}\subseteq\bar{Q}^k_{\sigma_k}$ and $\rho(x,(\tilde{Q}^k_{\sigma_k})^c)=0<\tau\delta^k$. Thus we are in the identical situation as before but with $\sigma_k$ in place of $\alpha$. Hence the same conclusion applies.
\end{proof}

\subsection{The proof of Theorem~\ref{probabilistic:statements}} From now on, assume $(\Omega,\prob)$ is a probability space with the properties \eqref{ProbSpace:one}--\eqref{ProbSpace:two} of Theorem~\ref{thm:section5}, and that $144A_0^8\delta\leq 1$.

\begin{definition}[Boundary zone of a dyadic cube]
For $\varepsilon >0$, we denote
\[\partial_{\varepsilon}Q:=\{x\in \bar{Q}\colon \rho(x,\tilde{Q}^c)\leq \varepsilon \}. \]
\end{definition}

We mention that if the space $(X,\rho)$ supports a doubling measure $\mu$, Lemma~\ref{lem:technical} has the following consequence \cite[Lemma~17]{Christ90}: For every $\varepsilon >0$ there exists $\tau \in (0,1]$ such that for every dyadic cube $Q_\alpha^k$,
\begin{equation*}
  \mu(\partial_{\tau\delta^k}Q^k_{\alpha})<\eps\mu(Q^k_{\alpha}).
\end{equation*}
Here, we are concerned with the following probabilistic analogue:

\begin{lemma}[\eqref{property:prob;small} of Theorem~\ref{probabilistic:statements}]\label{lem:boundary}
For a given $x\in X$ and $\tau>0$ and a fixed $k\in \Z$, there holds 
\[\prob\Big(\Big\{ \omega\in \Omega\colon x\in \bigcup_{\alpha}\partial_{\tau\delta^k}Q^k_\alpha(\omega) \Big\} \Big)\leq C_2\tau^{\eta}
\] 
for some constants $C_2,\eta>0$. 
\end{lemma}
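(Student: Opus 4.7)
Proof proposal.

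The plan is to combine the deterministic technical Lemma~\ref{lem:technical} with the independence structure \eqref{ProbSpace:one}--\eqref{ProbSpace:three} and the lower bound \eqref{ProbSpace:two}. The technical lemma will tell us that being in the $\tau\delta^k$-boundary zone is a scale-invariant statement: it forces, along the chain of new-dyadic descendants of $x$ from level $k$ down to level $k+N$, every inter-generational jump $\rho(z^j_{\sigma_j},z^{j+1}_{\sigma_{j+1}})$ to exceed $\varepsilon_1\delta^j$, where $N\sim\log_{1/\delta}(1/\tau)$. At each of these $N$ scales the coordinate $\omega_j$ that selects $z^j_{\sigma_j}(\omega)$ is independent of the others, and \eqref{ProbSpace:two} tells us that any admissible child is picked with probability $\geq \tau_0$. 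We will exhibit, at each scale, a ``collapsing'' admissible choice that brings $z^j_{\sigma_j}(\omega)$ close enough to $z^{j+1}_{\sigma_{j+1}}(\omega)$ to violate the large-jump conclusion, and assemble the scales by sequential conditioning.

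Setup and reduction. Choose $N\in\Z_+$ maximal with $12A_0^4\tau\leq c_0\delta^N$ (if no such positive $N$ exists, the desired inequality is trivial by absorbing constants). Set $E_k:=\{\omega: x\in \bigcup_\alpha \partial_{\tau\delta^k}Q^k_\alpha(\omega)\}$. For $\omega\in E_k$ fix $\alpha(\omega)$ with $x\in\bar{Q}^k_{\alpha(\omega)}(\omega)$ and $\rho(x,(\tilde{Q}^k_{\alpha(\omega)}(\omega))^c)<\tau\delta^k$. By \eqref{eq:closed-cover} and \eqref{eq:unionOfClosed}, there is a chain $(k+N,\sigma_{k+N})\leq_\omega\cdots\leq_\omega(k,\sigma_k)=(k,\alpha(\omega))$ with $x\in\bar{Q}^{k+N}_{\sigma_{k+N}}(\omega)$; since each pair has a unique parent in the new order, the chain is determined by its leaf. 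Applying Lemma~\ref{lem:technical} we obtain
\begin{equation*}
  \rho\bigl(z^j_{\sigma_j}(\omega),\,z^{j+1}_{\sigma_{j+1}}(\omega)\bigr)\geq \varepsilon_1\delta^j\qquad (k\leq j<k+N). \tag{$\ast$}
\end{equation*}

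Sequential conditioning. By \eqref{ProbSpace:one} and \eqref{ProbSpace:three} the $\omega_j$ are independent and $z^j_\alpha(\omega)$ depends only on $\omega_j$. Let $\mathcal{F}_j:=\sigma(\omega_i:i>j)$; then $\sigma_{j+1}(\omega)$, $\sigma_{j+2}(\omega),\ldots,\sigma_{k+N}(\omega)$ and the values $z^{j+1}_{\sigma_{j+1}}(\omega),\ldots$ are all $\mathcal{F}_j$-measurable (the partial order $\leq_\omega$ at levels $>j$ only involves those coordinates). The index $\sigma_j(\omega)$ is the unique $\leq_\omega$-parent of $\sigma_{j+1}(\omega)$, so it is also $\mathcal{F}_j$-measurable on $E_k$. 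I claim that conditional on $\mathcal{F}_j$, the probability (in $\omega_j$) that the inequality ($\ast$) holds at level $j$ is at most $1-\tau_0$. Indeed, writing $z^{j+1}_{\sigma_{j+1}}(\omega)=x^{j+2}_\gamma$ and using $(j+1,\sigma_{j+1})\leq_\omega(j,\sigma_j)$ together with the quasi-triangle inequality, one finds at least one reference-order child $x^{j+1}_{\beta^\star}$ of $x^j_{\sigma_j}$ whose distance to $x^{j+2}_\gamma$ is less than $\varepsilon_1\delta^j$ (this is where the smallness of $\delta$ enters); by \eqref{ProbSpace:two} the event $\{z^j_{\sigma_j}(\omega_j)=x^{j+1}_{\beta^\star}\}$ has conditional probability $\geq \tau_0$, and on that event the jump at level $j$ is $<\varepsilon_1\delta^j$, contradicting ($\ast$). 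Iterating this estimate from $j=k+N-1$ down to $j=k$ (taking conditional expectation with respect to $\mathcal{F}_j$ at each step) yields
\begin{equation*}
  \prob(E_k)\leq (1-\tau_0)^N\leq C_2\tau^{\eta},\qquad \eta:=\frac{\log(1/(1-\tau_0))}{\log(1/\delta)}.
\end{equation*}

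Main obstacle. The delicate point is the ``collapsing-child'' claim inside the conditional step: one must check that, given any admissible configuration above scale $j$ (i.e.\ the full future chain and the value $x^{j+2}_\gamma$ of $z^{j+1}_{\sigma_{j+1}}$), the reference-order children of $x^j_{\sigma_j}$ really do contain one within $\varepsilon_1\delta^j$ of $x^{j+2}_\gamma$. This is a geometric (not probabilistic) estimate that uses $(j+1,\sigma_{j+1})\leq_\omega(j,\sigma_j)$ together with $\rho(x^{j+2}_\gamma,x^{j+1}_{\sigma_{j+1}})<\delta^{j+1}$ and $\rho(x^{j+1}_{\sigma_{j+1}},\cdot)$ controls, and is where the quantitative smallness $144A_0^8\delta\leq 1$ is invoked to make $C_0\delta\ll \varepsilon_1$. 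Once this bookkeeping is in place, the remainder is a standard independence-product argument.
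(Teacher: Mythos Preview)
Your overall strategy---Lemma~\ref{lem:technical} forcing large jumps along the chain from level $k$ to $k+N$, then using independence and \eqref{ProbSpace:two} to bound the conditional probability of each large jump by $1-\tau_0$---is exactly the paper's. Two points need repair.

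First, your measurability claim is off: $\sigma_j(\omega)$ is \emph{not} $\mathcal{F}_j=\sigma(\omega_i:i>j)$-measurable, because the $\leq_\omega$-parent of $\sigma_{j+1}$ depends on the positions $z^j_\alpha(\omega_j)$, hence on $\omega_j$. What you actually need, and have, is that $\sigma_{j+1}$ and $z^{j+1}_{\sigma_{j+1}}$ are $\mathcal{F}_j$-measurable; the event $(\ast)_j$ then lies in $\sigma(\omega_i:i\geq j)$ and the tower argument still goes through. The paper sidesteps this by first reducing to boundedly many \emph{fixed} pairs $(\alpha,\sigma)$ with $\alpha\in A_k(x)$, $\sigma\in A_{k+N}(x)$, which also removes the measurable-selection issue implicit in your ``fix $\alpha(\omega)$''.

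Second, your ``collapsing-child'' step is circular as stated: you seek a reference-order child $x^{j+1}_{\beta^\star}$ \emph{of $x^j_{\sigma_j}$}, but $\sigma_j$ is precisely the quantity that depends on the yet-undetermined $\omega_j$, so it cannot be used to locate $\beta^\star$. The paper's argument is simpler and avoids this: by density of the reference points there is $x^{j+1}_\gamma$ with $\rho(x^{j+1}_\gamma,z^{j+1}_{\sigma_{j+1}})<\delta^{j+1}$. Letting $\alpha^\star$ be the \emph{reference}-parent of $\gamma$, \eqref{ProbSpace:two} gives $\prob\bigl(z^j_{\alpha^\star}(\omega)=x^{j+1}_\gamma\bigr)\geq\tau_0$; on that event $\rho(z^j_{\alpha^\star},z^{j+1}_{\sigma_{j+1}})<\delta^{j+1}<(2A_0)^{-1}c_0\delta^j$, which \emph{forces} $\sigma_j=\alpha^\star$ by the rule in Lemma~\ref{lem:partial;order}, and the jump is $<\delta^{j+1}<\varepsilon_1\delta^j$. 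No appeal to $x^j_{\sigma_j}$ or additional quasi-triangle bookkeeping is needed here.
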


\subsection{Reduction}\label{reduction}
We consider the event
\[E= \Big\{ \omega\in \Omega\colon x\in \bigcup_{\alpha}\partial_{\tau\delta^k}Q^k_\alpha(\omega) \Big\} .\]
First note that for every $x\in X$ and $k\in \Z$, there exists a finite set $A=A_k(x)$ of indices such that if $x\in \bar{Q}^k_\alpha(\omega)$ for any $\omega\in \Omega$, it follows that $\alpha\in A_k(x)$. Moreover, $\# A_k(x)\leq C<\infty$ where $C$ is independent of $x$ and $k$. Indeed, if $x\in \bar{Q}^k_\alpha(\omega)$ we have $\rho(x,z^k_\alpha(\omega))<C_1\delta^k$, and thus
\[\rho(x,x^k_\alpha)\leq A_0\rho(x,z^k_\alpha(\omega))+A_0\rho(z^k_\alpha(\omega),x^k_\alpha) <A_0(C_1+1)\delta^k,\]
since $\rho(z^k_\alpha(\omega),x^k_\alpha)<\delta^k$ by the choice of $z^k_\alpha(\omega)$. By the geometric doubling property, the ball $B(x,A_0(C_1+1)\delta^k)$ can contain at most boundedly many centres $x^k_\alpha$ of the disjoint balls
\begin{equation*}
  B(x^k_\alpha,(2A_0)^{-1}\delta^k).  
\end{equation*}
 
In particular, if $x\in \bigcup_{\alpha}\partial_{\tau\delta^k}Q^k_\alpha(\omega) $, then $x\in \bigcup_{\alpha\in A_k(x)}\partial_{\tau\delta^k}Q^k_\alpha(\omega) $ where $\# A_k(x)\leq C<\infty$ and $C$ is independent of $x$ and $k$. Since the closed dyadic cubes of any generation $k+N$ cover $X$, it follows that
 \begin{equation*}
 \begin{split}
 E &=  \Big\{ \omega\in \Omega\colon x\in \Big(\bigcup_{\alpha}\partial_{\tau\delta^k}Q^k_\alpha(\omega)\Big) \cap \Big(\bigcup_{\sigma}\bar{Q}^{k+N}_\sigma(\omega)\Big)\Big\}\\
 & =\Big\{ \omega\in \Omega\colon x\in 
 \Big(\bigcup_{\alpha\in A_k(x)}\partial_{\tau\delta^k}Q^k_\alpha(\omega)\Big) \cap \Big(\bigcup_{\sigma\in A_{k+N}(x)}\bar{Q}^{k+N}_\sigma(\omega)\Big) \Big\}\\
 &=\Big\{ \omega\in \Omega\colon x\in 
 \bigcup_{\alpha\in A_k(x)\atop \sigma\in A_{k+N}(x)} \Big(\partial_{\tau\delta^k}Q^k_\alpha(\omega) \cap \bar{Q}^{k+N}_\sigma(\omega)\Big) \Big\}.
 \end{split}
 \end{equation*}
Here the union is bounded, and we have
\[\prob (E)\leq \sum_{\alpha\in A_k(x)\atop \sigma\in A_{k+N}(x)}\prob \left(\left\{ \omega\in \Omega\colon x\in \partial_{\tau\delta^k}Q^k_\alpha(\omega)\cap \bar{Q}^{k+N}_\sigma(\omega) \right\} \right). \]
Thus, in order to prove Lemma~\ref{lem:boundary}, it suffices to prove that 
\[\prob\left(\left\{ \omega\in \Omega\colon x\in \partial_{\tau\delta^k}Q^k_\alpha(\omega)\cap \bar{Q}^{k+N}_\sigma(\omega) \right\} \right)\leq C_2\tau^{\eta}
\] 
for some constants $C_2,\eta>0$ with fixed $x\in X$, $\tau>0$, $N\in \N$, $k\in \Z$, $\alpha$ and $\sigma$.

We first state the following basic probability lemma, which we have included for the convenience of readers less experienced with conditional expectations; this is essentially the only place where probabilistic reasoning beyond standard measure theory will be needed. 

\begin{lemma}\label{lem:conditional;exp}
Let $\{\mathscr{F}_j\}$, $j=1, \ldots , k$, be a finite collection of $\sigma$-algebras and suppose that $\mathscr{F}_{j+1}\subseteq\mathscr{F}_j$ and $A_j\in\mathscr{F}_j$ for all $j$. Then
\[\Exp\prod_{j=1}^{k-1}\chi_{A_j}=\Exp
\Exp_{\mathscr{F}_{k}}\chi_{A_{k-1}}\Exp_{\mathscr{F}_{k-1}}\chi_{A_{k-2}}\ldots
\Exp_{\mathscr{F}_{2}}\chi_{A_{1}} \]
where $\Exp_{\mathscr{F}_j}[\,\cdot\,]:=\Exp [\;  \cdot \; \vert \mathscr{F}_j]$ denotes the conditional expectation given $\mathscr{F}_j$.
\end{lemma}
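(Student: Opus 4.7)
The plan is to read the right-hand side as the nested expression
\[
\Exp\,\Exp_{\mathscr{F}_k}\!\Bigl[\chi_{A_{k-1}}\Exp_{\mathscr{F}_{k-1}}\!\bigl[\chi_{A_{k-2}}\cdots\Exp_{\mathscr{F}_3}[\chi_{A_2}\Exp_{\mathscr{F}_2}\chi_{A_1}]\cdots\bigr]\Bigr],
\]
and to prove the identity by peeling off one conditional expectation at a time, working from the outside in. The two tools I will use are standard: the tower property $\Exp\Exp_{\mathscr{G}}[Y]=\Exp[Y]$, and the pull-out property $Z\,\Exp_{\mathscr{G}}[Y]=\Exp_{\mathscr{G}}[ZY]$ whenever $Z$ is $\mathscr{G}$-measurable.

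First, the outermost $\Exp\Exp_{\mathscr{F}_k}$ collapses to $\Exp$ by tower. That leaves an expression of the form $\Exp[\chi_{A_{k-1}}\Exp_{\mathscr{F}_{k-1}}[Y]]$. Since $A_{k-1}\in\mathscr{F}_{k-1}$, the indicator $\chi_{A_{k-1}}$ is $\mathscr{F}_{k-1}$-measurable, so it can be absorbed into $\Exp_{\mathscr{F}_{k-1}}[\,\cdot\,]$ by the pull-out property, and then the tower again removes $\Exp_{\mathscr{F}_{k-1}}$, at the cost of introducing a factor $\chi_{A_{k-1}}$ under the outer $\Exp$. Iterating this two-step manoeuvre from $j=k$ down to $j=2$ successively introduces the indicators $\chi_{A_{k-1}},\chi_{A_{k-2}},\ldots,\chi_{A_1}$ and leaves precisely $\Exp\prod_{j=1}^{k-1}\chi_{A_j}$.

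A clean way to formalize this is by finite downward induction on $m$ with the hypothesis
\[
\Exp\prod_{j=1}^{k-1}\chi_{A_j}=\Exp\Bigl[\prod_{j=m}^{k-1}\chi_{A_j}\cdot\Exp_{\mathscr{F}_m}[\chi_{A_{m-1}}\Exp_{\mathscr{F}_{m-1}}[\cdots\Exp_{\mathscr{F}_2}\chi_{A_1}]]\Bigr],
\]
where the initial case $m=k$ is exactly the outer tower identity, and the terminal case $m=2$ yields the claim after one final application of pull-out followed by tower. I do not anticipate any genuine obstacle: the whole argument is bookkeeping, and the only point requiring care is to keep track, at each step, of which indicator is measurable with respect to which $\sigma$-algebra (noting that the assumption $\mathscr{F}_{j+1}\subseteq\mathscr{F}_j$ runs in the direction making the indicators on the \emph{outside} of each conditional expectation measurable with respect to the enclosing $\sigma$-algebra).
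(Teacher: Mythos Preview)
Your proposal is correct and uses the same two tools as the paper's proof: the tower property and the pull-out property of conditional expectation, applied iteratively. The only difference is direction: the paper starts from $\Exp\prod_{j=1}^{k-1}\chi_{A_j}$ and \emph{inserts} the conditional expectations one at a time (first $\Exp_{\mathscr{F}_k}$ by tower, then $\Exp_{\mathscr{F}_{k-1}}$ inside it by tower again using $\mathscr{F}_k\subseteq\mathscr{F}_{k-1}$, then pulls out $\chi_{A_{k-1}}$, and so on), building up the nested expression; you start from the nested expression and \emph{collapse} it down. These are the same argument run in reverse.

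One small quibble on the formalization: as written, your displayed induction hypothesis has the left-hand side of the lemma on its left, so the ``initial case $m=k$'' is precisely the statement to be proved rather than something immediate from tower. What you evidently intend is either to place the right-hand side of the lemma on the left of the hypothesis (so that $m=k$ is trivial and $m=2$ plus one more pull-out/tower yields the left-hand side), or to take $m=2$ as the base case and induct upward. Your informal paragraph makes the meaning clear, so this is only a cosmetic slip.
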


\begin{proof}
By the properties of conditional expectation, see for example \cite{williams}, \S 9.7,
\[
\Exp \Big(\prod_{j=1}^{k-1}\chi_{A_j}\Big) 
=\Exp\, \Big(\Exp_{\mathscr{F}_{k}} \Big[\prod_{j=1}^{k-1}\chi_{A_j}\Big]\Big) .
\]
First we use the so-called Tower Property: Since $\mathscr{F}_{k}\subseteq \mathscr{F}_{k-1}$, there holds
\begin{equation}\label{first;step}
\Exp_{\mathscr{F}_{k}} \Big[\prod_{j=1}^{k-1}\chi_{A_j}\Big]
=  \Exp_{\mathscr{F}_{k}} \Big[\Exp_{\mathscr{F}_{k-1}}\Big[\prod_{j=1}^{k-1}\chi_{A_j}\Big]\Big] .
\end{equation}
Secondly, the fact that $\chi_{A_{k-1}}$ is $\mathscr{F}_{k-1}$-measurable implies that
\begin{equation}\label{second;step}
\Exp_{\mathscr{F}_{k-1}}\Big[\prod_{j=1}^{k-1}\chi_{A_j}\Big]
= \chi_{A_{k-1}}\Exp_{\mathscr{F}_{k-1}} \Big[\prod_{j=1}^{k-2}\chi_{A_j}\Big] .
\end{equation}
We now repeat steps \eqref{first;step} and \eqref{second;step} $k-1$ times to conclude that 
\[\Exp \Big(\prod_{j=1}^{k-1}\chi_{A_j}\Big) = 
\Exp\big(\Exp_{\mathscr{F}_k}[\chi_{A_{k-1}}\Exp_{\mathscr{F}_{k-1}}[\chi_{A_{k-2}}\ldots 
\chi_{A_{2}}\Exp_{\mathscr{F}_{2}}[\chi_{A_1}]\ldots]\big).\qedhere \]
\end{proof}

\begin{proof}[Proof of Lemma~\ref{lem:boundary}]
Fix $x\in X$ and $k\in \Z$.  Given $\tau\in(0,(4A_0^2)^{-1}c_1)$, pick the unique $N\in\N:=\{0,1,\ldots \}$ so that $c_1\delta^{N+1}< 4A_0^2\tau\leq c_1\delta^N$. (Since any probability is at most $1$, the claim is of course true for any $\eta$ when $\tau\geq (4A_0^2)^{-1}c_1$, taking large enough $C_2$.) Note that, in particular, $12A_0^4\tau \leq c_0\delta^N$ since $c_1:=(3A_0^2)^{-1}c_0$. Also note that, by the assumption $144A_0^8\delta\leq 1$, we have $18A_0^5C_0\delta\leq c_0$ since $c_0=(4A_0^2)^{-1}$ and $C_0=2A_0$. Thus, the parameter assumptions of Lemma~\ref{lem:technical} hold. By the reduction in \ref{reduction}, it suffices to consider the event
\[E_{\alpha,\sigma}:=\Big\{ \omega\in \Omega\colon x\in \partial_{\tau\delta^k}Q^k_\alpha(\omega)\cap \bar{Q}^{k+N}_\sigma(\omega) \Big\} \]
for fixed $\alpha$ and $\sigma$.

Let $\sigma=:\sigma_{k+N}$. For every $j=k, k+1,\ldots ,k+N-1$, let us denote
\[A_j:=\{\omega\in \Omega \colon \rho(z^j_{\sigma_j}(\omega),z^{j+1}_{\sigma_{j+1}}(\omega))\geq\varepsilon_1\delta^j \text{ for } (j,\sigma_j)\geq_{\omega} (j+1,\sigma_{j+1})\geq_{\omega}(k+N,\sigma)\} \]
where $\varepsilon_1:=(2A_0)^{-2}c_1$ is the constant from Lemma~\ref{lem:technical}. Note that the sets $A_j$ only depend on the choice of points of levels from $k+N$ to $j$ and, by \eqref{ProbSpace:three}, the choice of these points only depend on $\Omega_j$ for $j=k, k+1,\ldots ,k+N-1$.
By Lemma~\ref{lem:technical}, it particularly holds that
\[E_{\alpha,\sigma}\subseteq \bigcap_{j=k}^{k+N-1}A_j .\]
Let us denote by 
\[\mathscr{F}_j := \sigma \Big(\mathscr{H}_i\colon i\geq j \Big),\]
the $\sigma$-algebra generated by the class of subsets of $\Omega$ with the points of level $i\geq j$ fixed. Note that 
\begin{equation*}
A_j\in \mathscr{F}_j\text{ for every $j=1, \ldots ,N$ and } \mathscr{F}_{j+1}\subseteq \mathscr{F}_{j}\text{ for every }j=k,\ldots, k+N-1.
\end{equation*}
By Lemma~\ref{lem:conditional;exp}, we have
\begin{equation}\label{eq:the;chain}
\begin{split}
\prob(E_{\alpha ,\sigma}) & =\Exp(\chi_{E_{\alpha ,\sigma}})\leq \Exp\Big(\prod_{j=k}^{k+N-1}\chi_{A_j}\Big)\\
& =\Exp
\Exp_{\mathscr{F}_{k+N}}\chi_{A_{k+N-1}}\Exp_{\mathscr{F}_{k+N-1}}\chi_{A_{k+N-2}}\ldots
\Exp_{\mathscr{F}_{k+1}}\chi_{A_{k}} .
\end{split}
\end{equation}
We first calculate 
\[\Exp_{\mathscr{F}_{k+1}} [\chi_{A_k}] = \prob ( A_k\vert \mathscr{F}_{k+1} ) .\]
Note that for a given index pair $(k+1,\sigma_{k+1})$, there always exists a reference point $x^{k+1}_\gamma$ such that $\rho(x^{k+1}_\gamma,z^{k+1}_{\sigma_{k+1}}(\omega))<\delta^{k+1}<\varepsilon_1\delta^k$. On the other hand, by \eqref{ProbSpace:two}, there is a positive probability $\tau_0$ that $x^{k+1}_\gamma=z^k_{\gamma}(\omega)$. Thus, for a given index pair $(k+1,\sigma_{k+1})$, there is a positive probability that the pair $(k,\sigma_k)$ for which $(k,\sigma_k)\geq_\omega (k+1,\sigma_{k+1})$ satisfies $\rho(z^{k}_{\sigma_{k}},z^{k+1}_{\sigma_{k+1}})<\varepsilon_1\delta^k$. Since the negation for the event $A_k$ with given $\mathscr{F}_{k+1}$ is that for some index pair $(k+1,\sigma_{k+1})$, the parent is within the distance $\varepsilon_1\delta^k$, we conclude with
\begin{equation}\label{third;step}
\Exp_{\mathscr{F}_{k+1}} \chi_{A_k} = \prob ( A_k\vert \mathscr{F}_{k+1} )\leq 1-\tau_0 ,\quad \tau_0>0.
\end{equation}
Further, we have by the above, monotonicity and linearity
\begin{equation}\label{fourth;step}
\Exp_{\mathscr{F}_{k+2}}\Big[\chi_{A_{k+1}}\Exp_{\mathscr{F}_{k+1}} \chi_{A_k}\Big]
\leq  \Exp_{\mathscr{F}_{k+2}}\Big[\chi_{A_{k+1}}(1-\tau_0) \Big] =(1-\tau_0)\Exp_{\mathscr{F}_{k+2}}[\chi_{A_{k+1}}]
\end{equation}
since $1-\tau_0$ is a constant. We now proceed backwards and travel from the end of the chain in \eqref{eq:the;chain} repeating the steps in \eqref{third;step}  and \eqref{fourth;step} $N-1$ times. Each time the term $\Exp_{\mathscr{F}_{k+i}} \chi_{A_i}$, $i=1, \ldots , N$ is estimated from above by constant $1-\tau_0\in (0,1)$, which can then be relocated by equation \eqref{fourth;step}. What is obtained is the following:
\[ \prob (E_{\alpha,\sigma})\leq (1-\tau_0)^{N} <C_2\tau^\eta \]
with $C_2:=4A_0^2(c_1\delta)^{-1}$ and $\eta:=\log (1-\tau_0)/\log \delta>0$.
\end{proof}

\begin{corollary}[\eqref{property:prob;zero} of Theorem~\ref{probabilistic:statements}]\label{cor:pb;bdr:zero}
For $x\in X$,
\[\prob\Big(\Big\{ \omega\in \Omega\colon x\in \cup_{\alpha ,k}\partial Q^k_\alpha(\omega) \Big\} \Big)=0.
\] 
\end{corollary}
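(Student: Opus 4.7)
The plan is to deduce this from Lemma~\ref{lem:boundary} by two elementary steps: passing from the fattened boundary $\partial_{\tau\delta^k}Q^k_\alpha$ to the actual boundary $\partial Q^k_\alpha$ by sending $\tau\to 0$, and then taking a countable union over the scales $k\in\Z$.

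First I would note that, by definition, $\partial Q^k_\alpha=\bar Q^k_\alpha\setminus\tilde Q^k_\alpha\subseteq\partial_{\tau\delta^k}Q^k_\alpha$ for every $\tau>0$, since the points of $\bar Q^k_\alpha\setminus\tilde Q^k_\alpha$ satisfy $\rho(x,(\tilde Q^k_\alpha)^c)=0\leq\tau\delta^k$. Consequently, fixing any $k\in\Z$,
\begin{equation*}
  \Big\{\omega\in\Omega\colon x\in\bigcup_{\alpha}\partial Q^k_\alpha(\omega)\Big\}
  \subseteq\Big\{\omega\in\Omega\colon x\in\bigcup_{\alpha}\partial_{\tau\delta^k}Q^k_\alpha(\omega)\Big\}
\end{equation*}
for every $\tau>0$, and Lemma~\ref{lem:boundary} bounds the probability of the right-hand event by $C_2\tau^\eta$. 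Letting $\tau\to 0^+$ yields
\begin{equation*}
  \prob\Big(\Big\{\omega\in\Omega\colon x\in\bigcup_{\alpha}\partial Q^k_\alpha(\omega)\Big\}\Big)=0.
\end{equation*}

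Since $\Z$ is countable, sub-additivity of $\prob$ gives
\begin{equation*}
  \prob\Big(\Big\{\omega\in\Omega\colon x\in\bigcup_{k,\alpha}\partial Q^k_\alpha(\omega)\Big\}\Big)
  \leq\sum_{k\in\Z}\prob\Big(\Big\{\omega\colon x\in\bigcup_{\alpha}\partial Q^k_\alpha(\omega)\Big\}\Big)=0,
\end{equation*}
which is the claim. There is really no obstacle here: the substantive work was already in Lemma~\ref{lem:boundary}, and this corollary is just its qualitative shadow obtained by two standard measure-theoretic manipulations (monotonicity in $\tau$ and countable sub-additivity in $k$).
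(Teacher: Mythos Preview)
Your proof is correct and follows essentially the same route as the paper: observe $\partial Q^k_\alpha=\bar Q^k_\alpha\setminus\tilde Q^k_\alpha\subseteq\partial_{\tau\delta^k}Q^k_\alpha$, apply Lemma~\ref{lem:boundary}, let $\tau\to 0$, and finish by countable sub-additivity over $k$. There is nothing to add.
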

\begin{proof}
Recall from \eqref{eq:open-closed} that the cubes $\bar{Q}^k_\alpha$ and $\tilde{Q}^k_\alpha$ are the interior and closure of $Q^k_\alpha$, respectively. Thus,
\[\partial Q^k_\alpha=\bar{Q}^k_\alpha \setminus \tilde {Q}^k_\alpha= \bar{Q}^k_\alpha\cap (\tilde {Q}^k_\alpha)^c \subseteq \{x\in\bar{Q}^k_\alpha\colon \rho(x,(\tilde {Q}^k_\alpha)^c)=0  \}.\]
It follows that, for any $\tau>0$ there holds
\[\partial Q^k_\alpha\subseteq \{x\in\bar{Q}^k_\alpha\colon \rho(x,(\tilde {Q}^k_\alpha)^c)\leq \tau\delta^k  \}=\partial_{\tau\delta^k}Q^k_\alpha.\]
Thus,
\[\Big\{ \omega\in \Omega\colon x\in \cup_{\alpha}\partial Q^k_\alpha(\omega) \Big\}\subseteq
\Big\{ \omega\in \Omega\colon x\in \cup_{\alpha}\partial_{\tau\delta^k} Q^k_\alpha(\omega) \Big\} ,\]
and consequently, by Lemma~\ref{lem:boundary},
\[\prob\Big(\Big\{ \omega\in \Omega\colon x\in \cup_{\alpha}\partial Q^k_\alpha(\omega) \Big\}\Big)\leq
\prob\Big(\Big\{ \omega\in \Omega\colon x\in \cup_{\alpha}\partial_{\tau\delta^k} Q^k_\alpha(\omega) \Big\}\Big) \leq C_2\tau^\eta .\]
Thus, by passing $\tau$ to zero we obtain
\[\prob\Big(\Big\{ \omega\in \Omega\colon x\in \cup_{\alpha}\partial Q^k_\alpha(\omega) \Big\}\Big)=0.\]
Finally,
\[\prob\Big(\Big\{ \omega\in \Omega\colon x\in \cup_{\alpha ,k}\partial Q^k_\alpha(\omega) \Big\}\Big)\leq \sum_{k}\prob\Big(\Big\{ \omega\in \Omega\colon x\in \cup_{\alpha}\partial Q^k_\alpha(\omega) \Big\}\Big)=0.\]
\end{proof}

\begin{lemma}[\eqref{property:measure;zero} of Theorem~\ref{probabilistic:statements}]
Assume that $\mu$ is a positive $sigma$-finite measure on $X$. Then
\begin{equation}\label{boundary:zero}
\mu\left( \cup_{\alpha ,k}\partial Q^k_\alpha (\omega) \right) = 0\quad\text{for a.e. $\omega\in\Omega$}.
\end{equation}
In particular, given $\mu$ we may choose $\omega\in \Omega$ such that \eqref{boundary:zero} holds. 
\end{lemma}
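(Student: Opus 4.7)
The plan is a routine Fubini/Tonelli argument based on Corollary~\ref{cor:pb;bdr:zero}, which already provides the pointwise-in-$x$ vanishing of the probability. I would introduce the joint set
\begin{equation*}
  E := \{(x,\omega)\in X\times\Omega \colon x\in \bigcup_{k,\alpha}\partial Q^k_{\alpha}(\omega)\},
\end{equation*}
and aim to show that its $\omega$-sections have $\mu$-measure zero for $\prob$-almost every $\omega$.

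The first step is a measurability check. Since the index set $\{(k,\alpha):k\in\Z,\alpha\in\mathscr{A}_k\}$ is countable (by property \eqref{it:countable} of the doubling setup) and $\partial Q^k_\alpha=\bar Q^k_\alpha\setminus\tilde Q^k_\alpha$, it suffices to verify joint measurability of $(x,\omega)\mapsto\chi_{\bar Q^k_\alpha(\omega)}(x)$ and $(x,\omega)\mapsto\chi_{\tilde Q^k_\alpha(\omega)}(x)$ on $X\times\Omega$ equipped with the product of the $\mu$-completion and the $\prob$-completion. By \eqref{ProbSpace:three} and the fact that $\bar Q^k_\alpha(\omega)$, $\tilde Q^k_\alpha(\omega)$ depend only on the coordinates $\omega_\ell$ with $\ell\geq k$ (each $\omega_\ell$ living in a countable set), the cube $Q^k_\alpha(\omega)$ takes only countably many distinct values as $\omega$ ranges over $\Omega$, and on each such value the cube is a fixed subset of $X$. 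Hence $E$ is a countable union of ``rectangles'' $A\times B\subseteq X\times\Omega$ and is product-measurable.

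The second step is Tonelli. By $\sigma$-finiteness of $\mu$ we may compute both iterated integrals, and using Corollary~\ref{cor:pb;bdr:zero} for the inner integral on one side,
\begin{equation*}
  \int_\Omega \mu\Big(\bigcup_{k,\alpha}\partial Q^k_\alpha(\omega)\Big)\,d\prob(\omega)
  = \int_X \prob\Big(\Big\{\omega\colon x\in\bigcup_{k,\alpha}\partial Q^k_\alpha(\omega)\Big\}\Big)\,d\mu(x)
  = \int_X 0\,d\mu(x)=0.
\end{equation*}
Since the integrand on the left is nonnegative, it vanishes for $\prob$-a.e.\ $\omega\in\Omega$, which is precisely \eqref{boundary:zero}. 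The ``in particular'' clause then follows because a set of full probability is nonempty, so for any prescribed $\mu$ we may actually pick a concrete $\omega$ realizing the conclusion.

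The only point requiring real attention is the measurability reduction; everything else is immediate from the Corollary. I would guard against potential issues with non-separable $X$ by emphasizing that the countability of $\mathscr{A}_k$ and of the range of $\omega\mapsto Q^k_\alpha(\omega)$ lets the argument proceed without any assumption of a nice topology on $X$, using only the abstract $\sigma$-algebra on which $\mu$ is defined.
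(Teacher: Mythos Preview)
Your argument is the same Fubini/Tonelli swap that the paper uses: introduce the boundary set, integrate $\chi_E$ in both orders, and invoke Corollary~\ref{cor:pb;bdr:zero} to get zero on the $x$-first side. The paper in fact says nothing about joint measurability, so your added discussion goes beyond it; however, your specific justification is not quite right. In the concrete construction of \S\ref{prob;space}, each $\Omega_\ell$ is a (countable) \emph{product} of finite sets and is therefore typically uncountable, and even if every $\Omega_\ell$ were countable, $\bar Q^k_\alpha(\omega)$ depends on the full tail $(\omega_\ell)_{\ell\geq k}$, so the ``countably many values'' claim does not follow. A cleaner route is to note that each $z^\ell_\beta(\omega)$ is a measurable function of a single coordinate $\omega_\ell$ taking finitely many values, and build the measurability of $E$ from that; alternatively, one can simply take the measurability as part of the standing hypotheses under which the probabilistic assertions of Theorem~\ref{probabilistic:statements} make sense, as the paper implicitly does.
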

\begin{proof}
For a fixed $\omega\in\Omega$, denote
\[B_\omega:=\bigcup_{\alpha ,k}\partial Q^k_\alpha (\omega),\]
and for a fixed $x\in X$, denote
\[B^x:=\{\omega\in\Omega\colon x\in B_\omega \}. \]
By Fubini's Theorem,
\begin{equation*}
\begin{split}
\int_{\Omega}\mu(B_\omega)\,d\prob (\omega) &= \int_{\Omega}\int_{X}\chi_{B_\omega}(x)\,d\mu(x)d\prob (\omega) \\
&=\int_{X}\int_{\Omega}\chi_{B^x}(\omega)\,d\prob (\omega) d\mu(x)
=\int_{X}\prob(B^x)\, d\mu(x)=0
\end{split}
\end{equation*}
since $\prob (B^x)=0$ by Corollary~\ref{cor:pb;bdr:zero}. The assertion follows.
\end{proof}


\section{Random adjacent dyadic systems}\label{sec:adjRandom}
In this section we will prove the following theorem.

\begin{theorem}\label{thm:section6}
Given a set of reference points $\{x^k_\alpha\}_{k,\alpha}$, suppose that constant $\delta\in (0,1)$ satisfies $144A_0^8\delta\leq 1$. Then there exists a probability space $(\Omega ,\prob)$ such that every $\omega\in \Omega$ defines a family of dyadic systems $(\mathscr{D}^t(\omega))_{t=1}^{K}$ where each $\mathscr{D}^t(\omega)=\{^tQ^k_\alpha (\omega)\}_{k, \alpha}$, related to new dyadic points $\{^tz^k_\alpha (\omega)\}_{k,\alpha}$, satisfies the properties (\ref{eq:open-closed})--(\ref{eq:monotone}) of Theorem~\ref{thm:cubes}. Further,
{\setlength\arraycolsep{2pt}
\begin{eqnarray}
\text{for every $\omega\in \Omega$,}& (\mathscr{D}^t(\omega))_{t=1}^{K}& \text{satisfies the property of Lemma~\ref{prop;cubeandball};} \\
\text{$\quad\quad$for every $t\in \{1,\ldots ,K \}$,}& (\mathscr{D}^t(\omega))_{\omega\in \Omega}& \text{satisfies the properties (\ref{ProbSpace:one})--(\ref{ProbSpace:two}) of Theorem~\ref{thm:section5}}.\label{eq:forallt}
\end{eqnarray}}
\end{theorem}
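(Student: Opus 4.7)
The plan is to unify the deterministic adjacent construction of Theorem~\ref{thm:adjacent;systems} with the randomisation of Theorem~\ref{thm:section5} by randomising the labelling itself, so that for each $\omega$ the resulting $K$ dyadic systems obey the specific selection rule of Definition~\ref{spesific;rule} (hence cover every ball by Lemma~\ref{prop;cubeandball}), while each fixed marginal $\omega\mapsto\mathscr{D}^t(\omega)$ is a random dyadic system in the sense of Theorem~\ref{thm:section5}. The observation that makes this possible is that the specific rule and the general rule of Definition~\ref{general:rule} are instances of the same scheme, so one is free to randomise labels and fallback choices without leaving the hypothesis of Lemma~\ref{dyadic_points}.

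Concretely, I would set $\Omega:=\prod_{k\in\Z}\Omega_k$ with independent coordinates, where $\Omega_k$ packages (i) a uniform random permutation $\sigma_k\in S_{L+1}$ of the primary-label palette, yielding a random proper colouring $\lab_1^\omega(k,\alpha):=\sigma_k(\lab_1^0(k,\alpha))$ out of the deterministic Section~\ref{definition:labels} colouring $\lab_1^0$; (ii) for each $\alpha\in\mathscr{A}_k$, a uniform random bijection $\pi_{k,\alpha}$ from $\{1,\ldots,M_{(k,\alpha)}\}$ to the children of $(k,\alpha)$, which defines random duplex labels $\lab_2^\omega(k+1,\pi_{k,\alpha}(m)):=(\lab_1^\omega(k,\alpha),m)$; and (iii) uniform random fallback children among the candidates that remain admissible under the general rule in the cases the specific rule leaves open. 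For each $t=\varphi(\ell,m)\in\{1,\ldots,K\}$ and $\omega\in\Omega$, I would let ${}^tz^k_\alpha(\omega)$ be the output of the specific selection rule applied to the $\omega$-random labels, and obtain ${}^tQ^k_\alpha(\omega)$ from Theorem~\ref{thm:cubes}; since the selection remains a special case of the general rule, Lemma~\ref{dyadic_points} yields \eqref{ehdot} for every $\omega$, and hence \eqref{eq:open-closed}--\eqref{eq:monotone}.

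The remaining verifications go as follows. For each $\omega$, the family $(\mathscr{D}^t(\omega))_{t=1}^K$ satisfies the covering property of Lemma~\ref{prop;cubeandball}: given $B=B(x,r)$ with $\delta^{k+2}<r\le\delta^{k+1}$, pick a reference point $x^{k+1}_\beta$ with $\rho(x,x^{k+1}_\beta)<\delta^{k+1}$, set $t:=\varphi(\lab_2^\omega(k+1,\beta))$, and note that the deterministic branch of the specific rule gives ${}^tz^k_\alpha(\omega)=x^{k+1}_\beta$ for the unique $(k,\alpha)\ge(k+1,\beta)$; the computation already carried out in Lemma~\ref{prop;cubeandball} then yields $B\subseteq{}^tQ^k_\alpha(\omega)$ with the required diameter bound. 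For each fixed $t$, \eqref{ProbSpace:one} and \eqref{ProbSpace:three} are immediate from the product structure and from ${}^tz^k_\alpha(\omega)$ depending only on $\omega_k$; \eqref{ProbSpace:two} follows because for each $(k+1,\beta)\le(k,\alpha)$ the independent event $\{\sigma_k(\lab_1^0(k,\alpha))=\ell\}\cap\{\pi_{k,\alpha}(m)=\beta\}$ has probability at least $\tau_0:=[(L+1)M]^{-1}$, and on this event ${}^tz^k_\alpha(\omega)=x^{k+1}_\beta$.

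The main subtlety is that \eqref{ProbSpace:two} must hold for every child of $(k,\alpha)$, including the \emph{far} ones with $\rho(x^{k+1}_\beta,x^k_\alpha)\ge\delta^{k+1}$. Without the random colour permutation $\sigma_k$, such $\beta$ would be unreachable in the $t=(\ell,m)$ system whenever $\ell\ne\lab_1^0(k,\alpha)$, since the fallback branch of the specific rule only picks close children when $\lab_1\ne\ell$. Introducing $\sigma_k$ removes this obstacle while preserving the proper-colouring property needed by Lemma~\ref{dyadic_points}, since $\sigma_k$ permutes colours rather than nodes; at the same time it gives each $(k,\alpha)$ probability $1/(L+1)$ of carrying any prescribed colour $\ell$, from which the duplex permutation $\pi_{k,\alpha}$ supplies the remaining factor $1/M$ in $\tau_0$. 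This is the only place where randomness of the primary labels is genuinely needed.
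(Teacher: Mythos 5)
The plan is right-minded---couple the randomisation so that for every sample $\omega$ the $K$ families cover all balls, while each fixed $t$ yields a random dyadic system in the sense of Theorem~\ref{thm:section5}---but there is a genuine gap in the verification of \eqref{ProbSpace:two}. You fix $t=\varphi(\ell,m)$ and bound $\prob\bigl(\{\sigma_k(\lab_1^0(k,\alpha))=\ell\}\cap\{\pi_{k,\alpha}(m)=\beta\}\bigr)\ge[(L+1)M]^{-1}$. This is fine when $m\le M_{(k,\alpha)}$, but if the reference point $(k,\alpha)$ has strictly fewer than $m$ children then $m$ is outside the domain of $\pi_{k,\alpha}\colon\{1,\ldots,M_{(k,\alpha)}\}\to\{\text{children of }\alpha\}$ and the event $\{\pi_{k,\alpha}(m)=\beta\}$ is empty. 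The specific rule then triggers the fallback branch deterministically (for every $\omega$), and the fallback only ever chooses \emph{close} children, i.e.\ those with $\rho(x^{k+1}_\gamma,x^k_\alpha)<\delta^{k+1}$. For a far child $\beta$ of such a small cube, one therefore has $\prob({}^t\!z^k_\alpha(\omega)=x^{k+1}_\beta)=0$ rather than $\ge\tau_0$, so \eqref{ProbSpace:two} fails for that $t$. Your cautionary remark in the last paragraph correctly identifies that reaching far children is the delicate point, but the random colour permutation $\sigma_k$ only handles the primary component $\ell$; it does nothing about the secondary component $m$ exceeding the local number of children.

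The paper sidesteps this by randomising the \emph{target} rather than the assignment: $\Omega_k:=\{1,\ldots,K\}$, and for fixed $t$ the cyclic shift $\pi_k(t)=t+T_k\ (\mathrm{mod}\ K)$ is uniform over the whole label palette $\{1,\ldots,K\}$. The children's duplex labels are left deterministic; since every child carries a label with $m\le M_{(k,\alpha)}$, each is hit with probability exactly $1/K$, and $\tau_0:=1/K$ works for \emph{all} children. A more fine-grained variant closer to your intuition is sketched in the remark concluding Section~\ref{sec:adjRandom}: there an extra per-cube coordinate $m_{k,\alpha}$ shifts the secondary target index \emph{modulo $M_{k,\alpha}$}, so the target index is again always a valid local index. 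The reduction modulo $M_{k,\alpha}$ is exactly what your construction is missing. Replacing your random bijection $\pi_{k,\alpha}$ (domain $\{1,\ldots,M_{(k,\alpha)}\}$) by a comparison of the child's label against an $\omega$-dependent index that has been reduced modulo $M_{(k,\alpha)}$ would repair the argument. The rest of your proposal---the covering property via choosing $t=\varphi(\lab_2^\omega(k+1,\beta))$ for a reference point near $x$, and the product/independence structure \eqref{ProbSpace:one}, \eqref{ProbSpace:three}---is correct, and the extra randomness you add to the fallback choices (item (iii)) is harmless but unused.
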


Notice that the statement of this result makes no reference to randomness but the proof does, and it is not clear how to prove something like this without the help of randomization. In more classical set-ups, similar conclusions could be reached with the help of strongly Euclidean devices like rotations; cf.~\cite{MMNO}, Theorem~2 and its proof.

One immediate application of such a construction is the following. 
\begin{corollary}
Given a set of reference points $\{x^k_\alpha\}_{k,\alpha}$, suppose that constant $\delta\in (0,1)$ satisfies $144A_0^8\delta\leq 1$. Let $\mu$ be a positive $\sigma$-finite measure on $X$. Then the finite collection of adjacent dyadic systems $\mathscr{D}^t$, $t=1,\ldots,K$, as in Theorem~\ref{thm:adjacent;systems}, may be chosen to satisfy the additional property that
\begin{equation*}
  \mu(\partial Q)=0\qquad\forall Q\in\bigcup_{t=1}^K\mathscr{D}^t.
\end{equation*}
\end{corollary}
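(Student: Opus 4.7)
The plan is to deduce this corollary directly from Theorem~\ref{thm:section6} combined with the boundary estimate of Theorem~\ref{probabilistic:statements}, by selecting a single favourable $\omega\in\Omega$ from the random family of adjacent systems.

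First, I would invoke Theorem~\ref{thm:section6}, whose parameter assumption $144A_0^8\delta\leq 1$ coincides exactly with the hypothesis of the corollary (and is stronger than the $96A_0^6\delta\leq 1$ needed in Theorem~\ref{thm:adjacent;systems}), to produce the probability space $(\Omega,\prob)$ carrying the random adjacent dyadic systems $\bigl(\mathscr{D}^t(\omega)\bigr)_{t=1}^{K}$. For every individual $\omega$ this family automatically satisfies the covering property of Lemma~\ref{prop;cubeandball}, which is precisely property~\eqref{property:ball;included} of Theorem~\ref{thm:adjacent;systems}; the only remaining task is to arrange the boundary condition $\mu(\partial Q)=0$ simultaneously for all $K$ systems.

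The key observation is that, by property~\eqref{eq:forallt} of Theorem~\ref{thm:section6}, for each individual $t$ the one-parameter family $\bigl(\mathscr{D}^t(\omega)\bigr)_{\omega\in\Omega}$ fulfils the hypotheses \eqref{ProbSpace:one}--\eqref{ProbSpace:two} required to invoke Theorem~\ref{probabilistic:statements}. I would then apply its consequence~\eqref{property:measure;zero} to each such family separately: for every $t\in\{1,\ldots,K\}$, the set
\[
E_t:=\Bigl\{\omega\in\Omega:\mu\Bigl(\bigcup_{k,\alpha}\partial\,{}^{t}Q^k_\alpha(\omega)\Bigr)=0\Bigr\}
\]
has full probability $\prob(E_t)=1$. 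Since $K$ is finite, the intersection $\bigcap_{t=1}^{K} E_t$ is still of full probability, and hence non-empty.

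Finally, I would fix any $\omega_0\in\bigcap_{t=1}^{K} E_t$ and set $\mathscr{D}^t:=\mathscr{D}^t(\omega_0)$ for $t=1,\ldots,K$. These systems inherit all properties asserted in Theorem~\ref{thm:adjacent;systems}, and by construction every cube $Q\in\bigcup_{t=1}^{K}\mathscr{D}^t$ satisfies $\mu(\partial Q)=0$. No genuine technical obstacle arises; the only subtle conceptual point worth flagging is that Theorem~\ref{thm:section6} was designed precisely so that its output is compatible with Theorem~\ref{probabilistic:statements} \emph{separately} for each $t$, even though the jointly selected systems $\bigl(\mathscr{D}^t(\omega)\bigr)_{t=1}^{K}$ are certainly not independent in $\omega$.
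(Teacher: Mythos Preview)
Your proof is correct and follows essentially the same route as the paper: invoke Theorem~\ref{thm:section6}, apply \eqref{property:measure;zero} of Theorem~\ref{probabilistic:statements} to each $t$ via \eqref{eq:forallt}, intersect the finitely many full-probability sets, and pick any $\omega_0$ in the intersection. The paper's version is slightly more terse but the argument is identical.
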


\begin{proof}
Let $\mathscr{D}^t(\omega)$ be the random adjacent systems guaranteed by Theorem~\ref{thm:section6}. By \eqref{eq:forallt} and \eqref{property:measure;zero} of Theorem~\ref{probabilistic:statements}, we have that
\begin{equation*}
  \forall t\in\{1,\ldots,K\},\text{ for a.e. }\omega\in\Omega,\quad\mu\Big(\bigcup_{Q\in\mathscr{D}^t(\omega)}\partial Q\Big)=0.
\end{equation*}
Since there are only finitely many choices of $t$, we can reverse the order of ``$\forall t$'' and ``for a.e. $\omega\in\Omega$'' above, and then it suffices to choose any $\omega\in\Omega$ outside the event of probability zero implicit in the ``a.e.'', and take $\mathscr{D}^t:=\mathscr{D}^t(\omega)$ for this chosen $\omega\in\Omega$.
\end{proof}

\subsection{The probability space}
We define a probability space $\Omega$ by setting
\[\Omega:=\prod_{k\in \Z} \Omega_k,\qquad \Omega_k:=\{1,2,\ldots ,K\}. \]
The points $\omega\in\Omega$ admit the natural coordinate representation $\omega=(\omega_k)_{k\in \Z}$ where $\omega_k\in \{1,2,\ldots ,K\}$. 

We define a probability $\prob$ on $\Omega$ by requiring the coordinates $\omega_k$ to be independent and distributed with equal probabilities
\[\prob(\omega_k=T)=\frac{1}{K}\quad\forall\, T=1,2,\ldots ,K. \]
Given $\omega_k=T_k$, define a permutation of $(1,2,\ldots ,K )$ by
\[\pi_k(t):=t+T_k\quad (\!\!\!\!\!\!\mod K),\quad t=1,2,\ldots ,K.\]
Given $T_k,t\in\{ 1,2,\ldots ,K\}$, they together define an ordered pair $\pi_k(t)=(\ell_k(t) ,m_k(t))$ via the bijection $\varphi$ introduced in \ref{spesific;rule}. We define the new dyadic points $\{^t\!z^k_\alpha\}_{\alpha}$ of generation $k$ as follows. For every $(k,\alpha)$, check whether there exists $(k+1,\beta)\leq (k,\alpha)$ such that $\lab_2(k+1,\beta)=\pi_k(t)$. If so, decree $^t\!z^k_\alpha :=x^{k+1}_\beta$. Otherwise, pick any $(k+1,\beta)\leq (k,\alpha)$ with $\rho(x^{k+1}_\beta,x^k_\alpha)<\delta^{k+1}$ and decree $^t\!z^k_\alpha :=x^{k+1}_\beta$. (We could, for example, choose the nearest child of $x^k_\alpha$ and eliminate the arbitrariness of this choice.)

The choice of $\omega=(\omega_k)$ then uniquely determines the permutation $\pi_k$ on every level $k$ which in turn determines a family of new dyadic points $^t\!z^k_\alpha(\omega)= \,^t\!z^k_\alpha(\omega_k)=z^k_\alpha (\pi_k(t))$ for each $t=1,\ldots ,K$. 

Once the points $^t\!z^k_{\alpha}(\omega_k)$, $t=1,\ldots K$, are chosen, they uniquely determine the relation $\leq_{\omega,t}$ between the index pairs $(k,\alpha)$. Then, for every $t=1,\ldots,K$, the points $^t\!z^k_{\alpha}(\omega_k)$ and the relation $\leq_{\omega,t}$ together determine the new dyadic cubes $^t\!Q^k_\alpha(\omega_k)$, and their random choice corresponds to the random choice of $\omega$ according to the law $\prob$. Note that the choice of the new dyadic points $^t\!z^k_\alpha(\omega)=z^k_\alpha (\pi_k(t))$ coincides with the specific selection rule defined in \ref{spesific;rule}.

It is evident, by Lemma~\ref{dyadic_points} in view of \ref{spesific;rule}, that for every $\omega\in\Omega$ and every $t=1,\ldots K$, the dyadic system $\mathscr{D}^t(\omega)$ satisfies the properties \eqref{eq:open-closed}--\eqref{eq:monotone} of Theorem~\ref{thm:cubes}. We may complete the proof of Theorem~\ref{thm:section6} by the following lemma: 

\begin{lemma}
For every $\omega\in \Omega$, the family $(\mathscr{D}^t(\omega))_{t=1}^{K}$ satisfies the property of Lemma~\ref{prop;cubeandball}. For every $t=1,\ldots ,K$, $(\mathscr{D}^t(\omega))_{\omega\in \Omega}$ satisfies the properties (\ref{ProbSpace:one})--(\ref{ProbSpace:two}) of Theorem~\ref{thm:section5}.
\end{lemma}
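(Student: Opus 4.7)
The plan is to verify the two assertions separately; each reduces to observations already present in the deterministic construction, with the product probability space of Section~\ref{prob;space} arranged so that the random layer adds no extra work. Properties \eqref{ProbSpace:one} and \eqref{ProbSpace:three} will be essentially tautological from how $(\Omega,\prob)$ was built, so the main content is a covering argument on the deterministic side and a single uniformity computation on the probabilistic side.

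For the covering property, I would fix $\omega \in \Omega$ and a ball $B = B(x,r) \subseteq X$, and mimic the proof of Lemma~\ref{prop;cubeandball}. Pick $k \in \Z$ with $\delta^{k+2} < r \leq \delta^{k+1}$, a reference point $x^{k+1}_\beta$ with $\rho(x, x^{k+1}_\beta) < \delta^{k+1}$, set $(\ell,m) := \lab_2(k+1,\beta)$, and let $\alpha$ be the unique index with $(k+1,\beta) \leq (k,\alpha)$. Since $\pi_k\colon t \mapsto t + \omega_k \pmod{K}$ is a bijection of $\{1,\ldots,K\}$ for every value of $\omega_k$, there is a unique $t^\star$ with $\pi_k(t^\star) = \varphi(\ell,m)$. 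By the defining rule for the new dyadic points, $(k+1,\beta)$ is then the unique child of $(k,\alpha)$ whose duplex label matches $\pi_k(t^\star)$, so the first clause of the selection rule forces ${}^{t^\star}\!z^k_\alpha(\omega) = x^{k+1}_\beta$. The triangle-inequality computation from Lemma~\ref{prop;cubeandball} now applies verbatim and gives $B \subseteq {}^{t^\star}\!Q^k_\alpha(\omega) \in \mathscr{D}^{t^\star}(\omega)$ with $\diam({}^{t^\star}\!Q^k_\alpha(\omega)) \leq Cr$.

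The probabilistic assertions are equally direct. Property \eqref{ProbSpace:one} is built into the construction: $\Omega = \prod_k \Omega_k$ has independent uniformly distributed coordinates by definition. Property \eqref{ProbSpace:three} holds because ${}^t\!z^k_\alpha(\omega)$ depends on $\omega$ only through the permutation $\pi_k$, hence only through $\omega_k$. For \eqref{ProbSpace:two}, fix $t$ and a pair $(k+1,\beta) \leq (k,\alpha)$ with $\lab_2(k+1,\beta) = (\ell,m)$. The event $\{\omega \in \Omega \colon \pi_k(t) = \varphi(\ell,m)\}$ forces the first clause of the selection rule to single out the child $(k+1,\beta)$, so it is contained in the event $\{{}^t\!z^k_\alpha(\omega) = x^{k+1}_\beta\}$. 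Uniformity of $\omega_k$ on $\{1,\ldots,K\}$ gives $\prob(\pi_k(t) = \varphi(\ell,m)) = 1/K$, yielding the desired bound with $\tau_0 := 1/K = [(L+1)M]^{-1}$.

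The only point that needs verification beyond bookkeeping is that the first clause of the selection rule really does single out $(k+1,\beta)$ on the event above, rather than some competing child of $(k,\alpha)$ with the same duplex label. This is precisely where the label construction of Section~\ref{definition:labels} is used: the children of any given $(k,\alpha)$ carry pairwise distinct duplex labels (indeed sharing the first coordinate $\lab_1(k,\alpha)$ and differing in the second), so exactly one child bears the label $\pi_k(t)$ whenever any child does, and by construction that child is the prescribed $(k+1,\beta)$. With this observation, both halves of the lemma follow without any further probabilistic machinery.
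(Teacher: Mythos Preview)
Your proof is correct and follows essentially the same approach as the paper: for the covering assertion you reduce to the proof of Lemma~\ref{prop;cubeandball} by using the bijectivity of $\pi_k$ to locate the index $t^\star$ with $\pi_k(t^\star)=\varphi(\lab_2(k+1,\beta))$, and for the probabilistic assertion you compute $\prob(\pi_k(t)=\varphi(\ell,m))=1/K$. If anything, you are slightly more careful than the paper in two places: you state the probability bound as an inclusion of events rather than an equality (which is the correct direction, since the fallback clause could also select $(k+1,\beta)$), and you explicitly note that distinct children of $(k,\alpha)$ carry distinct duplex labels, so the first clause of the selection rule is unambiguous.
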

\begin{proof}
Suppose $\omega\in\Omega$, $\omega=(\omega_k)_{k\in \Z}$, and let $\pi_k$ be the permutation defined by $T_k:=\omega_k$. Going back to the proof of Lemma~\ref{prop;cubeandball}, it suffices to prove that for every $(k+1,\beta)$, there holds $x^{k+1}_\beta={}^t\!z^k_\alpha$ for $(k,\alpha)\geq (k+1,\beta)$ and some $t=1,\ldots ,K$. But this is clear since $\lab_2(k+1,\beta)=:t=\pi_k(t-T_k)$ where $t-T_k$ is defined modulo $K$. By construction, $^t\!z^k_\alpha=x^{k+1}_\beta$, and the first assertion follows.

With fixed $t\in\{ 1,\ldots ,K\}$ and $(k+1,\beta)\leq (k,\alpha)$, there is a positive probability $\tau_0= 1/K$ that $\pi_k(t)=s$ for $s=\lab_2(k+1,\beta)$. Thus,
\[\prob(\{\omega\in\Omega\colon ^t\!z^k_\alpha(\omega)=x^{k+1}_\beta\}) =\prob(\{\omega\in\Omega\colon \pi_k(t)=\lab_2(k+1,\beta) \})=\tau_0>0, \]
and hence, (\ref{ProbSpace:two}) of Theorem~\ref{thm:section5} holds. The other properties follow directly from the construction of $(\Omega,\prob)$. The second assertion follows.
\end{proof}
\begin{remark}
By the construction of the probability space $(\Omega ,\prob)$, while the random choice of new dyadic points is independent on different levels, on a given level $k$ it only depends on the choice of $\omega_k$. Thus, the choice of points $\{^t\!z^k_\alpha\}_\alpha$ is not independent. By slightly changing the construction of $\Omega_k$, we may obtain independence also among the choice of non-neighbouring points on the same level: We define a probability space $\Omega$ by setting
\[\Omega:=\prod_{k\in \Z} \Omega_k,\qquad \Omega_k:=\{1,2,\ldots ,K\}\times \prod_{\alpha\in\mathscr{A}_k}\{1,2,\ldots ,M_{k,\alpha}\}, \]
where $M_{k,\alpha}:=\# \{\gamma\colon (k+1,\gamma)\leq (k,\alpha)\}$, the number of children of reference point $(k,\alpha)$.

The points $\omega\in\Omega$ again admit the natural coordinate representation $\omega=(\omega_k)_{k\in \Z}$. Moreover, $\omega_k=(T_k,m_{k,\alpha}\colon \alpha\in \mathscr{A}_k)\in \Omega_k$ where $T_k\in \{1,\ldots ,K \}$ and $m_{k,\alpha}\in \{1,\ldots ,M_{k,\alpha}\}$.

We define a probability $\prob$ on $\Omega$ by requiring the coordinates $\omega_k$ to be independent and distributed as follows. First,
\[\prob(T_k=T)=\frac{1}{K}\quad \forall\, T=1,2,\ldots ,K. \]
Second, the subcoordinates $m_{k,\alpha}$ are again independent with distribution
\[\prob(m_{k,\alpha}=m)=\frac{1}{M_{k,\alpha}}\quad \forall\, m=1,2,\ldots ,M_{k,\alpha}.\]
Given $\omega_k=(T_k,m_{k,\alpha}\colon \alpha\in\mathscr{A}_k)$, we define the new dyadic points as follows. First, $T_k$ defines a cyclic permutation $\pi_k$ of $(1,2,\ldots ,K )$ as before. Then, for every $t=1,\ldots, K$ and $(k,\alpha)$, check whether there exists $(k+1,\beta)\leq (k,\alpha)$ such that 
\[\lab_2(k+1,\beta)=(\pr_1(\pi_k(t)),\pr_2(\pi_k(t))+m_{k,\alpha}\; (\!\!\!\!\!\!\mod M_{k,\alpha})).\] 
If so, decree $^t\!z^k_\alpha :=x^{k+1}_\beta$. Otherwise, pick any $(k+1,\beta)\leq (k,\alpha)$ with $\rho(x^{k+1}_\beta,x^k_\alpha)<\delta^{k+1}$ and decree $^t\!z^k_\alpha :=x^{k+1}_\beta$. (We could again choose, for example, the nearest child of $x^k_\alpha$ and eliminate the arbitrariness of this choice.)
\end{remark}

\section{Applications}
\subsection{Set-up}
Let $(X,\rho)$ be a quasi-metric space and suppose that $\mu$ is a positive Borel-measure on $X$ satisfying the doubling condition
\begin{equation}\label{def:doubling}
\mu(2B)\leq C\mu(B)\quad\text{for all balls $B$}.
\end{equation}
Note that if $\mu$ satisfies the above doubling condition then $0<\mu(B)<\infty$ for all balls $B$. Let us state the following well-known lemma.

\begin{lemma}\label{prop;balls;measures}
For every $x\in X$ and $0<r\leq R$ we have
\[\frac{\mu(B(x,R))}{\mu (B(x,r))}\leq C_\mu \left(\frac{R}{r}\right)^{c_\mu} \] 
where $c_\mu=\log_2C_\mu$ and $C_\mu$ is the smallest constant satisfying \eqref{def:doubling}.
\end{lemma}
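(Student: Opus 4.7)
The plan is to iterate the doubling condition a controlled number of times. Let $n$ be the unique nonnegative integer with $2^{n-1}r < R \leq 2^n r$, that is, $n := \lceil \log_2(R/r) \rceil$ when $R>r$ (and $n=0$ if $R=r$, in which case the claim is trivial since $C_\mu\geq 1$). Then $B(x,R)\subseteq B(x,2^n r)$, so by monotonicity of $\mu$ and $n$-fold application of \eqref{def:doubling} to the concentric balls $B(x,r)$, $B(x,2r)$, \ldots, $B(x,2^n r)$, we obtain
\begin{equation*}
  \mu(B(x,R)) \leq \mu(B(x,2^n r)) \leq C_\mu^{\,n}\, \mu(B(x,r)).
\end{equation*}

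Next I would convert the exponent $n$ into the stated power of $R/r$. Since $n \leq \log_2(R/r) + 1$, and writing $c_\mu=\log_2 C_\mu$, we get
\begin{equation*}
  C_\mu^{\,n} \leq C_\mu^{\,1+\log_2(R/r)} = C_\mu \cdot 2^{c_\mu \log_2(R/r)} = C_\mu \left(\frac{R}{r}\right)^{c_\mu},
\end{equation*}
which gives the desired bound upon dividing by $\mu(B(x,r))$ (which is strictly positive since $\mu$ is doubling, as noted in the paragraph just above the lemma).

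There is essentially no obstacle here; the only thing to be slightly careful about is the convention for $2B$ in the quasi-metric setting, namely that $2B(x,r)$ is interpreted as $B(x,2r)$, so that the $n$-fold iteration of \eqref{def:doubling} legitimately yields the chain $\mu(B(x,r))\leq C_\mu \mu(B(x,2r))\leq \ldots \leq C_\mu^{\,n}\mu(B(x,2^n r))$ running in the reverse direction. Once this is granted, the proof reduces to the two displays above.
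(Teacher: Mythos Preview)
Your argument is correct and is the standard one; the paper does not actually prove this lemma, labeling it ``well-known'' and passing directly to its corollary. One small wording point: in your final paragraph the displayed chain $\mu(B(x,r))\leq C_\mu\,\mu(B(x,2r))\leq\ldots$ is oriented the wrong way for the application (what you need is $\mu(B(x,2^n r))\leq C_\mu\,\mu(B(x,2^{n-1}r))\leq\ldots\leq C_\mu^{\,n}\mu(B(x,r))$), though your phrase ``running in the reverse direction'' shows you are aware of this; it would be cleaner just to write the chain in the correct orientation.
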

There is an immediate sequel to dyadic cubes: 
\begin{corollary}\label{cor;balls;measures}
There exists a constant $C\geq 1$ such that for every dyadic cube $Q$ there holds $\mu (B_Q)\leq C\mu (Q)$, where $B_Q \supseteq Q$ is the containing ball of $Q$ as in \eqref{eq:contain}. Conversely, given a ball $B:=B(x,r)$, there exists a dyadic system $\mathscr{D}^t$ and a dyadic cube $Q_B\in \mathscr{D}^t$ such that $B\subseteq Q_B$ and $\mu(Q_B)\leq C\mu(B)$.
\end{corollary}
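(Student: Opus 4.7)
My plan is to prove the two statements in sequence, each by a short computation that combines the geometric sandwich $B(z^k_\alpha,c_1\delta^k)\subseteq Q^k_\alpha\subseteq B(z^k_\alpha,C_1\delta^k)$ from \eqref{eq:contain} with the doubling estimate of Lemma~\ref{prop;balls;measures}.

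For the first assertion, let $Q=Q^k_\alpha$ with inner and outer balls $B(z^k_\alpha,c_1\delta^k)\subseteq Q\subseteq B_Q=B(z^k_\alpha,C_1\delta^k)$. Applying Lemma~\ref{prop;balls;measures} to the concentric balls of radii $c_1\delta^k$ and $C_1\delta^k$ gives $\mu(B_Q)\leq C_\mu(C_1/c_1)^{c_\mu}\mu(B(z^k_\alpha,c_1\delta^k))\leq C\,\mu(Q)$, with $C$ depending only on the doubling constant and on the ratio $C_1/c_1=6A_0^3C_0/c_0$, which is fixed by the construction.

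For the converse, let $B=B(x,r)$. By Theorem~\ref{thm:adjacent;systems} (property \eqref{property:ball;included}), choose $t$ and $Q_B={}^tQ^k_\alpha\in\mathscr{D}^t$ with $B\subseteq Q_B$ and $\diam(Q_B)\leq C'r$; moreover, as recorded in the remark after Lemma~\ref{prop;cubeandball}, we may take $\delta^{k+2}<r\leq\delta^{k+1}$, so $\delta^k\leq\delta^{-2}r$. Since $B\subseteq Q_B\subseteq B(z^k_\alpha,C_1\delta^k)$, the point $x$ satisfies $\rho(x,z^k_\alpha)<C_1\delta^k$, and the quasi-triangle inequality yields, for every $y\in B(z^k_\alpha,C_1\delta^k)$,
\begin{equation*}
  \rho(x,y)\leq A_0\rho(x,z^k_\alpha)+A_0\rho(z^k_\alpha,y)<2A_0C_1\delta^k\leq 2A_0C_1\delta^{-2}\,r=:C''r.
\end{equation*}
Hence $B(z^k_\alpha,C_1\delta^k)\subseteq B(x,C''r)$, and combining the first part with doubling gives
\begin{equation*}
  \mu(Q_B)\leq\mu\bigl(B(z^k_\alpha,C_1\delta^k)\bigr)\leq\mu\bigl(B(x,C''r)\bigr)\leq C_\mu(C'')^{c_\mu}\mu(B),
\end{equation*}
which is the required estimate.

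There is no serious obstacle here; the only point requiring a small amount of care is extracting an explicit comparison between the scale $\delta^k$ of the containing dyadic cube and the radius $r$ of the given ball, so that the quasi-triangle step produces a dilation factor depending only on $A_0$ and $\delta$ (hence ultimately only on $X$), and not on $B$. Once that is in hand, both bounds reduce to a single application of the doubling property.
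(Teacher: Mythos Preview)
Your argument is correct and follows essentially the same route as the paper: both parts reduce to the doubling estimate of Lemma~\ref{prop;balls;measures} applied to the ball sandwiches coming from \eqref{eq:contain} and Lemma~\ref{prop;cubeandball}. The paper is slightly more direct in the second part, observing that $x\in B\subseteq Q_B$ together with $\diam(Q_B)\leq C'r$ already yields $Q_B\subseteq B(x,C'r)$, so one need not track the generation $k$ or invoke the quasi-triangle inequality explicitly; also, the phrase ``combining the first part'' in your final step is a slip of the pen, since only monotonicity and doubling are used there.
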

\begin{proof}
Given a dyadic cube $Q$ and a ball $B(x,r)$, consider the balls $B(x_{\alpha}^k, c_1\delta^k)\subseteq Q^k_\alpha \subseteq B(x_{\alpha}^k, C_1\delta^k)=:B_Q$ from Theorem \ref{thm:cubes} and the cube $Q_B$ from Lemma~\ref{prop;cubeandball} with $B(x,r)\subseteq Q_B \subseteq B(x, Cr)$. The assertion follows readily from Lemma~\ref{prop;balls;measures}.
\end{proof}

\subsection{Maximal operators}
Let $\omega$ be a weight on $X$, i.e. $\omega\geq 0$ and $\omega\in L^1_{\loc}(X,\mu)$. Given a measurable set $E$, denote $\omega (E):=\int_{E}\omega\,d\mu$. Define \textit{weighted Hardy--Littlewood maximal operator} $M_\omega$ by
\begin{equation}\label{operator}
M_\omega f(x)=\sup_{B\ni x}\frac{1}{\omega(B)}\int_{B}\abs{f}\,\omega d\mu, \quad f\in L^1_{\loc}(X,\mu), x\in X,
\end{equation}
where the supremum is over all balls $B$ containing $x$. We drop the subscript $\omega$ in $M_\omega$ if $\omega\equiv 1$. 

Given a weight $\omega$ and $p>1$, set $\sigma =\omega^{-1/(p-1)}$. We say that $\omega$ satisfies the $A_p$-condition and denote $\omega\in A_p$ if
\begin{equation*}
\| \omega \|_{A_p} := \sup_{B}\frac{\omega(B)\sigma(B)^{p-1}}{\mu(B)^p}<\infty .
\end{equation*}
Note that $\omega\in A_p$, if and only if $\sigma\in A_{p'}$ where $1/p+1/p'=1$.

By the fundamental result of B. Muckenhoupt, the classical Hardy--Littlewood maximal operator $M$ is bounded on $L^p_\omega$, $1<p<\infty$, if and only if $\omega\in A_p$. In this section, we will provide a quantitative formulation of this well-known result on the metric space $(X,\mu)$.

Let $\mathscr{D}^t$ denote any fixed dyadic grid of cubes $Q^k_\alpha, k\in \Z,\alpha\in \mathscr{A}_k$ constructed in Section~\ref{sec:adjacent}. We define the \textit{weighted dyadic maximal operator} $M^{\mathscr{D}^t}_\omega$ by
\begin{equation}\label{def;dyadicoperator}
M^{\mathscr{D}^t}_\omega\!\! f(x)=\sup_{Q\ni x}\frac{1}{\omega(Q)}\int_{Q}\abs{f}\, \omega d\mu , \quad f\in L^1_{\loc}(X,\mu), x\in X ,
\end{equation}
where the supremum is over all dyadic cubes $Q\in \mathscr{D}^t$ containing $x$.

By a well-known fact, see for example \cite{williams}, Theorem 14.11, the dyadic maximal operators $M^{\mathscr{D}^t}_\omega$ are bounded on $L^p_\omega$, $1<p<\infty$, uniformly in all weights $\omega$: For $1<p<\infty$ there holds
\begin{equation}\label{sharp:ineq;dyadic}
\| M^{\mathscr{D}^t}_\omega \!\! f\|_{L^p_\omega}\leq p'\, \| f\|_{L^p_\omega} 
\end{equation}
for all $f\in L^p_\omega$.

We further consider the closely related \textit{sharp maximal operator} $M^\#$ defined by
\[M^\# f(x):=\sup_{B \ni x}\frac{1}{\mu(B)}\int_{B}\abs{f-f_B}\, d\mu, \quad f\in L^1_{\loc}(X,\mu), x\in X, \]
where the supremum is over all balls containing $x$, and we have used the notation
\[f_E:=\frac{1}{\mu(E)}\int_{E}\abs{f}\,d\mu, \]
for the integral average of $f$ over a bounded measurable set $E$, $\mu(E)>0$. Further define the \textit{dyadic sharp maximal operator} $M^\#_{\mathscr{D}^t} f$ by
\[M^\#_{\mathscr{D}^t} f(x):=\sup_{Q \ni x}\frac{1}{\mu(Q)}\int_{Q}\abs{f-f_Q}\, d\mu, \quad f\in L^1_{\loc}(X,\mu), x\in X, \]
where the supremum is over all dyadic cubes $Q\in \mathscr{D}^t$ containing $x$.

We will first show the equivalence between the classical operators $M$ and $M^\#$ and their dyadic counterparts:
\begin{proposition}\label{prop:equiv;operators}
Let $f\in L^1_{\loc}(X,\mu)$. We have the pointwise estimates
\begin{align}
M^{\mathscr{D}^t}\!\!f(x)\leq CMf(x)\qquad\text{and}\qquad 
Mf(x) \leq C\sum_{t=1}^{K}M^{\mathscr{D}^t}\!\!f(x); \label{lemma;one}\\
M^\#_{\mathscr{D}^t}f(x)\leq CM^\# f(x)\qquad\text{and}\qquad 
M^\# f(x) \leq C\sum_{t=1}^{K}M^\#_{\mathscr{D}^t}f(x) \label{lemma;two}
\end{align}
where both of the first inequalities hold for every $t=1,\ldots ,K$ and the constant $C\geq 1$ is independent of $f$.
\end{proposition}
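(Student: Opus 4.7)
The plan is to reduce all four inequalities to two basic geometric facts about the adjacent dyadic systems, both already available in the paper: (a) every dyadic cube $Q$ sits in a ball $B(Q)\supseteq Q$ with $\mu(B(Q))\leq C\mu(Q)$, by Corollary~\ref{cor;balls;measures}; and (b) for every ball $B$, some $t\in\{1,\ldots,K\}$ admits a cube $Q_B\in\mathscr{D}^t$ with $B\subseteq Q_B$ and $\mu(Q_B)\leq C\mu(B)$, by Lemma~\ref{prop;cubeandball} combined with doubling (Lemma~\ref{prop;balls;measures}). Every step below is an instance of the principle that whenever $x$ lies in two nested sets with comparable measures, an average over the smaller set is controlled by a constant times the average over the larger, and vice versa.

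For the inequalities involving $M$, each direction is a one-line comparison of averages. To obtain $M^{\mathscr{D}^t}\!\!f(x)\leq CMf(x)$, I would fix $x\in Q\in\mathscr{D}^t$ and use (a) to estimate $\mu(Q)^{-1}\int_Q|f|\,d\mu$ by $C\mu(B(Q))^{-1}\int_{B(Q)}|f|\,d\mu\leq CMf(x)$, since $x\in Q\subseteq B(Q)$; then take the supremum over $Q\ni x$. Conversely, for $Mf(x)\leq C\sum_t M^{\mathscr{D}^t}\!\!f(x)$, I would fix a ball $B\ni x$ and use (b) to estimate $\mu(B)^{-1}\int_B|f|\,d\mu$ by $C\mu(Q_B)^{-1}\int_{Q_B}|f|\,d\mu\leq CM^{\mathscr{D}^t}\!\!f(x)$, since $x\in B\subseteq Q_B$.

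For $M^{\#}$, the standard ingredient is the observation that for any constant $c$ and any measurable $E$ with $\mu(E)\in(0,\infty)$,
\begin{equation*}
  \frac{1}{\mu(E)}\int_E |f-f_E|\,d\mu \leq \frac{2}{\mu(E)}\int_E |f-c|\,d\mu,
\end{equation*}
which follows from the triangle inequality and the definition of $f_E$. This lets me swap the centering constant $f_Q$ (resp.\ $f_B$) for any other convenient constant, at the cost of a factor of $2$. Combining this with (a), I obtain $M^{\#}_{\mathscr{D}^t}f(x)\leq CM^{\#}f(x)$ by centering the $Q$-oscillation at $c:=f_{B(Q)}$ and then enlarging the domain of integration from $Q$ to $B(Q)$; conversely, combining with (b), I obtain $M^{\#}f(x)\leq C\sum_t M^{\#}_{\mathscr{D}^t}f(x)$ by centering the $B$-oscillation at $c:=f_{Q_B}$ and enlarging from $B$ to $Q_B$.

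I do not anticipate any serious obstacle: the whole argument is a direct transcription of the classical Euclidean reasoning once property (b) is in hand, which is exactly what the adjacent systems were constructed to provide. One minor point worth remarking is the paper's convention $f_E:=\mu(E)^{-1}\int_E|f|\,d\mu$ with absolute values inside the integral; since the oscillation inequality above treats $f_E$ purely as a fixed constant, this nonstandard convention plays no role in the estimates.
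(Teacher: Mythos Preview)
Your proposal is correct and follows essentially the same approach as the paper's proof: both rely on Corollary~\ref{cor;balls;measures} for (a) and on Lemma~\ref{prop;cubeandball} together with doubling for (b), then compare averages via the ``change of centering constant'' trick $\mu(E)^{-1}\int_E|f-f_E|\,d\mu\leq 2\mu(E)^{-1}\int_E|f-c|\,d\mu$. The paper's write-up differs only in presentation (it handles \eqref{lemma;two} first and obtains \eqref{lemma;one} as the special case $f_B=f_Q=0$), and your remark about the nonstandard convention for $f_E$ is a valid observation that does not affect the argument.
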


\begin{proof}
Fix $t$ and assume $x\in Q, Q\in \mathscr{D}^t$. Let $B\supseteq Q$ be the containing ball of $Q$ as in \eqref{eq:contain}. Then $\mu(B)\leq C\mu(Q)$ by Lemma~\ref{cor;balls;measures}, and thereby
\begin{align*}
\frac{1}{\mu(Q)}\int_{Q}\abs{f-f_Q}\,d\mu 
& \leq\frac{1}{\mu(Q)}\int_{Q}\abs{f-f_B}\,d\mu +\left\vert f_B -f_Q\right\vert\\
& \leq \frac{2}{\mu(Q)}\int_{Q}\abs{f-f_B}\,d\mu \leq \frac{2C}{\mu(B)}\int_{B}\abs{f-f_B}\,d\mu  \\
& \leq 2C M^\# f(x).
\end{align*}
The first inequality in \eqref{lemma;two} follows by taking a supremum over all dyadic cubes in $\mathscr{D}^t$ containing $x$. Also the first inequality in \eqref{lemma;one} follows by putting $f_B=f_Q=0$ in the above and making the obvious simplifications.

For the reverse inequalities, consider ball $B\ni x$ and let $Q=Q(t)$ be the dyadic cube as in  Lemma~\ref{prop;cubeandball} with $B\subseteq Q$ and $\mu(Q)\leq C\mu(B)$. By repeating the argumentation above with the roles of $B$ and $Q$ interchanged we may conclude with
\begin{align*}
\frac{1}{\mu(B)}\int_{B}\abs{f-f_B}\,d\mu & \leq 2C M^\#_{\mathscr{D}^t}f(x)\leq 2C\sum_{t=1}^{K} M^\#_{\mathscr{D}^t}f(x) .
\end{align*}
The second inequality in \eqref{lemma;two} follows again by taking a supremum over all balls containing $x$. Also the second inequality in \eqref{lemma;one} follows as earlier.
\end{proof}

\subsection{The sharp weighted norm of the Hardy--Littlewood maximal operator}
As to illustrate the use of the new adjacent dyadic systems constructed in Section~\ref{sec:adjacent}, we will provide an easy extension of the Buckley's theorem \cite{Buckley:93} on the sharp dependence of $\| M\|_{L^p_\omega}$ on $\| \omega\|_{A_p}$ in Muckenhoupt's theorem for Hardy--Littlewood maximal function to metric spaces $X$ with a doubling measure $\mu$:

\begin{proposition}
Let $1<p<\infty$. Then 
\begin{equation*}
  \Norm{Mf}{L^p_w}\leq C\Norm{w}{A_p}^{1/(p-1)}\Norm{f}{L^p_w},
\end{equation*}
where the constant $C$ depends only on $X,\mu$ and $p$.
\end{proposition}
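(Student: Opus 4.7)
The plan is to follow the Euclidean approach of Lerner~\cite{Lerner:08}, with the adjacent dyadic systems from Section~\ref{sec:adjacent} taking over the role of the Besicovitch covering theorem. First, by the pointwise estimate $Mf(x)\leq C\sum_{t=1}^{K}M^{\mathscr{D}^{t}}f(x)$ from~(\ref{lemma;one}) together with the triangle inequality in $L^p_w$, the claim reduces to proving the dyadic Buckley estimate
\begin{equation*}
\Norm{M^{\mathscr{D}^{t}}f}{L^p_w}\leq C\Norm{w}{A_p}^{1/(p-1)}\Norm{f}{L^p_w}
\end{equation*}
separately for each fixed $t\in\{1,\ldots,K\}$. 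This is precisely the step where Lerner's Euclidean proof invokes Besicovitch; here it will be immediate, and the metric-space geometry of $(X,\rho)$ is entirely absorbed into the construction of Section~\ref{sec:adjacent}.

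Next, I would fix a dyadic system $\mathscr{D}=\mathscr{D}^{t}$, introduce the dual weight $\sigma:=w^{-1/(p-1)}$ (so that $\Norm{f}{L^p_w}=\Norm{f\sigma^{-1}}{L^p_\sigma}$), and organise the dyadic estimate around two standard ingredients. The first is the factorisation
\begin{equation*}
\frac{1}{\mu(Q)}\int_{Q}\abs{f}\,d\mu=\frac{\sigma(Q)}{\mu(Q)}\,\langle\abs{f}\sigma^{-1}\rangle^{\sigma}_{Q},\qquad\langle g\rangle^{\sigma}_{Q}:=\frac{1}{\sigma(Q)}\int_{Q}g\,\sigma\,d\mu,
\end{equation*}
paired with the key $A_p$ inequality
\begin{equation*}
\left(\frac{\sigma(Q)}{\mu(Q)}\right)^{p}w(Q)=\sigma(Q)\cdot\frac{w(Q)\sigma(Q)^{p-1}}{\mu(Q)^{p}}\leq\Norm{w}{A_p}\,\sigma(Q),
\end{equation*}
which transfers $w$-weighted $L^p$ data into a sum of $\sigma$-averages. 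The second is a principal-cube (corona) decomposition of $\mathscr{D}$ adapted to $\abs{f}\sigma^{-1}$ and the measure $\sigma\,d\mu$: starting from a top cube, each $Q\in\mathcal{S}$ has as its $\mathcal{S}$-children the maximal dyadic subcubes on which $\langle\abs{f}\sigma^{-1}\rangle^{\sigma}$ first exceeds twice its value on $Q$. The associated sets $E(Q):=Q\setminus\bigcup\{\text{$\mathcal{S}$-children of }Q\}$ are pairwise disjoint with $\sigma(E(Q))\geq\tfrac{1}{2}\sigma(Q)$. Combining the stopping decomposition of $M^{\mathscr{D}}f$ indexed by $\mathcal{S}$ with the above $A_p$ inequality and with the $L^p_\sigma$-boundedness of $M^{\mathscr{D}}_\sigma$ with constant $p'$ --- applied to the weight $\sigma$ as in~(\ref{sharp:ineq;dyadic}) --- reduces the whole estimate to a clean computation in the $\sigma$-averaged world.

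The main obstacle will be to track the power of $\Norm{w}{A_p}$ so that the sharp Buckley exponent $1/(p-1)$ emerges, rather than a cruder exponent such as $1/p$ produced by a direct combination of the ingredients above. The device for closing this gap, executed carefully in Lerner's argument, is to exploit the symmetric $A_p$--$A_{p'}$ duality $\Norm{\sigma}{A_{p'}}=\Norm{w}{A_p}^{1/(p-1)}$ and, at the decisive step, to apply the estimate with the roles of $(w,p)$ and $(\sigma,p')$ swapped. Once the sharp-constant bookkeeping has been carried out for one dyadic system $\mathscr{D}^{t}$, summing over $t=1,\ldots,K$ and combining with the reduction in the first paragraph yields the proposition.
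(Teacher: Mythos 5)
Your first reduction step is correct and matches the paper exactly: by Proposition~\ref{prop:equiv;operators} and (\ref{lemma;one}) it suffices to bound each dyadic operator $M^{\mathscr{D}^{t}}$ on $L^p_w$, and the adjacent systems from Section~\ref{sec:adjacent} replace the Besicovitch covering theorem. From that point on, however, your route diverges from both Lerner's Euclidean argument and the paper's, and it contains a genuine gap.

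You propose a principal-cube (corona) decomposition $\mathcal{S}$ adapted to the $\sigma$-averages $\langle\abs{f}\sigma^{-1}\rangle^{\sigma}_{Q}$ and the measure $\sigma\,d\mu$. Your factorisation
\begin{equation*}
\frac{1}{\mu(Q)}\int_{Q}\abs{f}\,d\mu=\frac{\sigma(Q)}{\mu(Q)}\,\langle\abs{f}\sigma^{-1}\rangle^{\sigma}_{Q}
\end{equation*}
expresses $M^{\mathscr{D}}f(x)=\sup_{Q\ni x}\frac{\sigma(Q)}{\mu(Q)}\langle\abs{f}\sigma^{-1}\rangle^{\sigma}_{Q}$, but a stopping family built only from the $\sigma$-averages of $\abs{f}\sigma^{-1}$ gives no control whatsoever on the factor $\sigma(Q)/\mu(Q)$ as $Q$ ranges among the cubes containing $x$ inside a given stopping block. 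So ``the stopping decomposition of $M^{\mathscr{D}}f$ indexed by $\mathcal{S}$'' is not actually available from what you have set up. Moreover, you yourself anticipate the resulting exponent-loss (ending up with $\Norm{w}{A_p}^{1/p}$ rather than $\Norm{w}{A_p}^{1/(p-1)}$), and the remedy you gesture at --- ``exploit the $A_p$--$A_{p'}$ duality and swap roles'' --- is not a device used in Lerner's proof and does not resolve the issue: it is precisely a vague placeholder where a concrete argument is needed.

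What Lerner actually does (and what the paper reproduces verbatim) involves no corona or stopping-time construction at all. Starting from
\begin{equation*}
\frac{1}{\mu(Q)}\int_{Q}\abs{f}\,d\mu
= A_p(Q)^{\frac{1}{p-1}}\left[\frac{\mu(Q)}{w(Q)}\left(\frac{1}{\sigma(Q)}\int_{Q}\abs{f}\sigma^{-1}\,\sigma\,d\mu\right)^{p-1}\right]^{\frac{1}{p-1}},
\end{equation*}
one bounds $A_p(Q)\leq\Norm{w}{A_p}$, uses $x\mapsto x^{p-1}$ applied to the inner $\sigma$-average bounded by $M^{\mathscr{D}}_\sigma(f\sigma^{-1})(x)$ for $x\in Q$, pushes the result into a $\mu$-average over $Q$ rewritten as a $w$-average, and thus obtains the \emph{pointwise} domination
\begin{equation*}
M^{\mathscr{D}}f(x)\leq\Norm{w}{A_p}^{\frac{1}{p-1}}\Big(M^{\mathscr{D}}_w\big([M^{\mathscr{D}}_\sigma(f\sigma^{-1})]^{p-1}w^{-1}\big)(x)\Big)^{\frac{1}{p-1}}.
\end{equation*}
Taking $L^p_w$-norms and applying the universal bound (\ref{sharp:ineq;dyadic}) once in $L^{p'}_w$ (constant $p$) to the outer operator $M^{\mathscr{D}}_w$ and once in $L^p_\sigma$ (constant $p'$) to the inner $M^{\mathscr{D}}_\sigma$ then yields the sharp exponent $1/(p-1)$ immediately; the duality $\Norm{f\sigma^{-1}}{L^p_\sigma}=\Norm{f}{L^p_w}$ converts back, and no exponent is lost. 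You should replace your stopping-time machinery by this pointwise estimate and the double application of (\ref{sharp:ineq;dyadic}).
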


\begin{proof}
By Proposition~\ref{prop:equiv;operators}, it suffices to prove the analogous estimate for the dyadic maximal operator $M^{\mathscr{D}^t}\!\!$. We follow the Euclidean approach due to Lerner~\cite{Lerner:08} with the deviation that we will utilize the universal maximal function estimate \eqref{sharp:ineq;dyadic} instead of the corresponding result for the centred maximal operator which, in the Euclidean case, is a well-known consequence of the Besicovich covering theorem --- a powerful classical tool generally unavailable in abstract metric spaces. We repeat the details for the reader's convenience.

Suppose $0\leq \omega\in L^1_{\loc}(X,\mu)$. Denote $\sigma =\omega^{-1/(p-1)}$ and
\[A_p(Q):=\frac{\omega(Q)\sigma(Q)^{p-1}}{\mu(Q)^p}. \]
Fix $t$ and suppose $x\in Q, Q\in\mathscr{D}^t$. We have
\begin{align*}
\frac{1}{\mu(Q)}\int_{Q}\abs{f}\, d\mu &= A_p(Q)^{\frac{1}{p-1}}\left[ \frac{\mu (Q)}{\omega(Q)}\left( \frac{1}{\sigma(Q)}\int_{Q}\abs{f}\, d\mu\right)^{p-1}\right]^{\frac{1}{p-1}}\\
& \leq \| \omega\|_{A_p}^{\frac{1}{p-1}} 
\left[ \frac{1}{\omega(Q)}\int_{Q}\left( \frac{1}{\sigma(Q)}\int_{Q}\abs{f\sigma^{-1}}\, \sigma d\mu\right)^{p-1}d\mu\right]^{\frac{1}{p-1}}\\
& \leq \| \omega\|_{A_p}^{\frac{1}{p-1}} 
\left[ \frac{1}{\omega(Q)}\int_{Q}\Big(M^{\mathscr{D}^t}_\sigma(f\sigma^{-1})(x) \Big)^{p-1}d\mu(x)\right]^{\frac{1}{p-1}}\\
& \leq \| \omega\|_{A_p}^{\frac{1}{p-1}} 
\left[ \frac{1}{\omega(Q)}\int_{Q}\Big([M^{\mathscr{D}^t}_\sigma(f\sigma^{-1})]^{p-1}\omega^{-1}\Big)\omega d\mu\right]^{\frac{1}{p-1}},
\end{align*}
and hence,
\[M^{\mathscr{D}^t}\!\!f(x)\leq \| \omega\|_{A_p}^{\frac{1}{p-1}} 
M^{\mathscr{D}^t}_\omega\Big([M^{\mathscr{D}^t}_\sigma(f\sigma^{-1})]^{p-1}\omega^{-1}\Big)(x)^{\frac{1}{p-1}}. \]
Therefore, since $p/(p-1)=p'$ and $\sigma =\omega^{-1/(p-1)}$, the above estimate together with the universal maximal function estimate \eqref{sharp:ineq;dyadic} implies that
\begin{align*}
\|M^{\mathscr{D}^t}\!\! f \|_{L^p_\omega}& \leq
\| \omega\|_{A_p}^{\frac{1}{p-1}} 
\Big\| M^{\mathscr{D}^t}_\omega\Big([M^{\mathscr{D}^t}_\sigma(f\sigma^{-1})]^{p-1}
\omega^{-1}\Big)\Big\|^{\frac{1}{p-1}}_{L^{p'}_\omega}\\
&\leq \| \omega\|_{A_p}^{\frac{1}{p-1}}  p^{\frac{1}{p-1}}\;
\| [M^{\mathscr{D}^t}_\sigma(f\sigma^{-1})]^{p-1}\omega^{-1}\|_{L^{p'}_\omega}^{\frac{1}{p-1}}\\
& = \| \omega\|_{A_p}^{\frac{1}{p-1}}  p^{\frac{1}{p-1}}\;
\| M^{\mathscr{D}^t}_\sigma(f\sigma^{-1})\|_{L^{p}_\sigma}\\
&\leq \| \omega\|_{A_p}^{\frac{1}{p-1}}  p^{\frac{1}{p-1}}p'\;
\| f\sigma^{-1}\|_{L^{p}_\sigma}\\
& =\| \omega\|_{A_p}^{\frac{1}{p-1}}  p^{\frac{1}{p-1}}p'\;
\| f\|_{L^{p}_\omega}.\qedhere
\end{align*}
\end{proof}


\subsection{Functions of bounded mean oscillation}
Recall that \textit{the classical $\BMO(\mu)$ space} is the set of equivalence classes of functions $f\in L_{\loc}^1(X,\mu)$, modulo additive constants, such that the $L^1$-averages 
\[\frac{1}{\mu(B)}\int_{B} \abs{f-f_B}\, d\mu,\qquad f_B:=\frac{1}{\mu(B)}\int_{B}\abs{f}\,d\mu, \]
are bounded (uniformly in $B$). The non-negative real number
\begin{equation}\label{def;BMO}
\| f \|_{\BMO}:= \underset{B}\sup \frac{1}{\mu(B)}\int_{B} \abs{f-f_B}\, d\mu <\infty,
\end{equation}
where the supremum is over all balls, is then called the $\BMO$-norm of $f$.

For every $t= 1, \ldots , K$, we define a \textit{dyadic $\BMO(\mu)$ space} $\BMO_{\mathscr{D}^t}$ as the set of equivalence classes of functions $f\in L^1_{\loc}(X,\mu)$, modulo additive constants, such that 
\[\|f\|_{\BMO_{\mathscr{D}^t}}:=\underset{Q\in\mathscr{D}^t}\sup\frac{1}{\mu(Q)}\int_{Q} \abs{f-f_Q}\, d\mu<\infty,\qquad f_Q:=\frac{1}{\mu(Q)}\int_{Q}\abs{f}\,d\mu, \]
where the supremum is over all dyadic cubes $Q\in\mathscr{D}^t$. The quantity $\|f\|_{\BMO_{\mathscr{D}^t}}$ is then the dyadic $\BMO$-norm of $f$.

The relationship between the two kinds of $\BMO$ spaces has been studied in the Euclidean setting in \cite{Garnett} and \cite{Mei03}. 

As a second illustration of the use of the new adjacent dyadic systems, we provide a representation of $\BMO(\mu)$ as an intersection of finitely many dyadic $\BMO(\mu)$ spaces. This extends the Euclidean result, which was explicitly stated by T. Mei \cite{Mei03}, but already implicit in some earlier work; cf. \cite{Mei03}, Remark 6. A related result in metric spaces was also proven by Caruso and Fanciullo \cite{CF}.
\begin{proposition}\label{prop:bmo}
Suppose $(X,\rho)$ is a quasi-metric space and $\mu$ is a positive Borel-measure on $X$ with the doubling property \eqref{def:doubling}. There exist constants $C>0$ and $C'>0$ depending only on $X$ and $\mu$ such that for every $f\in L_{\loc}^1(X,\mu)$, there holds
\[C\|f \|_{\BMO_{\mathscr{D}^t}}\leq \|f \|_{\BMO}\leq C' \sum_{t=1}^{K} \|f \|_{\BMO_{\mathscr{D}^t}} \]
where the first estimate holds for every $t=1,\ldots ,K$. Thus,
\[\BMO(\mu)=\displaystyle\bigcap_{t=1}^{K}\BMO_{\mathscr{D}^t}(\mu)\]
with equivalent norms.
\end{proposition}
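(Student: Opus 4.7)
My plan is to deduce the proposition almost directly from Proposition~\ref{prop:equiv;operators} and the covering property of the adjacent dyadic systems (Lemma~\ref{prop;cubeandball}), so the argument amounts essentially to taking $L^\infty$ norms of the pointwise inequalities in \eqref{lemma;two}. Indeed, by the very definitions of the BMO norms and the sharp maximal operators, one has
\begin{equation*}
  \|f\|_{\BMO}=\|M^{\#}f\|_{L^\infty(\mu)},\qquad
  \|f\|_{\BMO_{\mathscr{D}^{t}}}=\|M^{\#}_{\mathscr{D}^{t}}f\|_{L^\infty(\mu)}
  \quad (t=1,\ldots,K),
\end{equation*}
so the two claimed inequalities follow by passing to the essential supremum in \eqref{lemma;two}.

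For completeness, I would also sketch the underlying comparison directly, so that the reader sees where doubling and Lemma~\ref{prop;cubeandball} enter. For the first inequality, fix $t$ and a dyadic cube $Q=Q^{k}_{\alpha}\in\mathscr{D}^{t}$; let $B_{Q}:=B(z^{k}_{\alpha},C_{1}\delta^{k})\supseteq Q$ be its containing ball from \eqref{eq:contain}. By Corollary~\ref{cor;balls;measures}, $\mu(B_{Q})\leq C\mu(Q)$, and hence the standard trick
\begin{equation*}
  \frac{1}{\mu(Q)}\int_{Q}|f-f_{Q}|\ud\mu
  \leq\frac{2}{\mu(Q)}\int_{Q}|f-f_{B_{Q}}|\ud\mu
  \leq\frac{2C}{\mu(B_{Q})}\int_{B_{Q}}|f-f_{B_{Q}}|\ud\mu
  \leq 2C\|f\|_{\BMO}
\end{equation*}
gives $\|f\|_{\BMO_{\mathscr{D}^{t}}}\leq 2C\|f\|_{\BMO}$ upon taking the supremum over $Q\in\mathscr{D}^{t}$.

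For the reverse inequality, fix a ball $B=B(x,r)$ and apply Lemma~\ref{prop;cubeandball} to obtain an index $t\in\{1,\ldots,K\}$ and a cube $Q_{B}\in\mathscr{D}^{t}$ with $B\subseteq Q_{B}$ and $\diam(Q_{B})\leq Cr$. Using doubling again, $\mu(Q_{B})\leq C\mu(B)$, and the same two-step comparison now with the roles of $B$ and $Q_{B}$ interchanged yields
\begin{equation*}
  \frac{1}{\mu(B)}\int_{B}|f-f_{B}|\ud\mu
  \leq\frac{2C}{\mu(Q_{B})}\int_{Q_{B}}|f-f_{Q_{B}}|\ud\mu
  \leq 2C\|f\|_{\BMO_{\mathscr{D}^{t}}}
  \leq 2C\sum_{s=1}^{K}\|f\|_{\BMO_{\mathscr{D}^{s}}}.
\end{equation*}
Taking the supremum over $B$ gives the second inequality.

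There is no real obstacle here: the whole point is that the non-random construction of the adjacent families $\mathscr{D}^{t}$ in Section~\ref{sec:adjacent} already packages exactly the geometric information needed, namely that every metric ball sits inside some dyadic cube of comparable size drawn from the finite collection $\bigcup_{t}\mathscr{D}^{t}$. The only mildly delicate point to check is that the constant $C$ in $\mu(Q_{B})\leq C\mu(B)$ is uniform in $t$, which is immediate from the uniform bound $\diam(Q_{B})\leq Cr$ in Lemma~\ref{prop;cubeandball} together with the doubling of $\mu$ (Lemma~\ref{prop;balls;measures}). The equality $\BMO(\mu)=\bigcap_{t=1}^{K}\BMO_{\mathscr{D}^{t}}(\mu)$ with equivalent norms is then an immediate consequence of the two-sided estimate.
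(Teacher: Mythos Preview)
Your proof is correct and follows exactly the paper's approach: the paper's own proof is the single sentence ``This is an immediate corollary of Proposition~\ref{prop:equiv;operators},'' and your argument simply unpacks that corollary by taking $L^\infty$ norms of the pointwise estimates~\eqref{lemma;two} (and, for completeness, re-deriving those estimates in the BMO setting via Corollary~\ref{cor;balls;measures} and Lemma~\ref{prop;cubeandball}).
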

\begin{proof}
This is an immediate corollary of Proposition~\ref{prop:equiv;operators}.
\end{proof}

\begin{remark}
It is a well-known fact that both the classical and dyadic $\BMO(\mu)$ spaces satisfy the John--Nirenberg inequality. The proof for the dyadic version is slightly easier. One may represent the space $\BMO^p(\mu)$ as an intersection of finitely many dyadic spaces $\BMO^p_{\mathscr{D}^t}$, $p>1$, as stated in Proposition~\ref{prop:bmo}. With this representation, one may derive the John--Nirenberg inequality and the exponential integrability of $\BMO(\mu)$ functions from their dyadic counterparts, thereby avoiding some technicalities in the proof.
\end{remark}

\bibliographystyle{plain}
\def\cprime{$'$} \def\cprime{$'$} \def\cprime{$'$}

\end{document}